\newcommand{\C}{\mathbb{C}}
\newcommand{\N}{\mathbb{N}}
\newcommand{\R}{\mathbb{R}}
\newcommand{\Z}{\mathbb{Z}}
\newcommand{\calD}{{\mathcal {D}}}
\newcommand{\calS}{{\mathcal {S}}}
\newcommand{\supp}{\operatorname{supp}}
\newcommand{\chip}{{\rho}}
\newcommand{\ov}{\overline}
\newcommand{\ve}{\varepsilon}
\newcommand{\vp}{\varphi}
\newcommand{\lan}{\langle}
\newcommand{\ran}{\rangle}
\newcommand{\B}{\mathbf B}
\newcommand{\bb}{\mathbf b}
\newcommand{\F}{\mathbf F}
\newcommand{\f}{\mathbf f}
\newcommand{\dist}{\operatorname{dist}}
\def\Koniec{\hbox to\hsize{\hfil$\diamond$}}
\def\ep{\varepsilon}
\def\1{\mathbf 1}
\newtheorem{theorem}{Theorem}[section]
\newtheorem{corollary}[theorem]{Corollary}
\newtheorem{lemma}[theorem]{Lemma}
\newtheorem{proposition}[theorem]{Proposition}
\theoremstyle{definition}
\newtheorem{definition}[theorem]{Definition}
\theoremstyle{remark}
\newtheorem{remark}[theorem]{Remark}
\numberwithin{equation}{section}
\begin{document}

\title{Parseval wavelet frames on Riemannian manifold}

\author{Marcin Bownik}
\address{Department of Mathematics, University of Oregon, Eugene, OR 97403--1222, USA}
\email{mbownik@uoregon.edu}

\author{Karol Dziedziul}
\address{
Faculty of Applied Mathematics,
Gda\'nsk University of Technology,
ul. G. Narutowicza 11/12,
80-952 Gda\'nsk, Poland}

\email{karol.dziedziul@pg.edu.pl}

\author{Anna Kamont}
\address{
Institute of Mathematics, Polish Academy of Sciences, ul. Abrahama 18, 80--825 Sopot, Poland}

\email{anna.kamont@impan.pl}

\keywords{Riemannian manifold, Hestenes operator, smooth decomposition of identity, wavelet frame, Triebel-Lizorkin space}

\thanks{The first author was partially supported by the NSF grant DMS-1956395. The authors are grateful to Isaak Pesenson for useful comments on Lemma \ref{male}.}

\subjclass[2000]{42C40, 46E30, 46E35, 58C35}

\date{\today}

\begin{abstract}
We construct Parseval wavelet frames in $L^2(M)$ for a general Riemannian manifold $M$ and we show the existence of wavelet unconditional frames in $L^p(M)$ for $1 < p <\infty$. This is made possible thanks to smooth orthogonal projection decomposition of the identity operator on $L^2(M)$, which was recently proven by the authors in \cite{BDK}. We also show a characterization of Triebel-Lizorkin $\F_{p,q}^s(M)$ and Besov $\B_{p,q}^s(M)$ spaces on compact manifolds in terms of magnitudes of coefficients of Parseval wavelet frames. We achieve this by showing that Hestenes operators are bounded on manifolds $M$ with bounded geometry. 
\end{abstract}

\maketitle

\section{Introduction}

The goal of this paper is to construct Parseval wavelet frames on Riemannian manifolds. This area dates back to the pioneering work of Ciesielski and Figiel \cite{CF1, CF2, CF3} who have constructed spline bases for Sobolev and Besov spaces on compact $C^\infty$ manifolds, see also \cite{FW}.
Ciesielski-Figiel decomposition of manifolds into cubes was subsequently used in the construction of wavelets on compact manifolds by Dahmen and Schneider \cite{DaSch} and by Kunoth and Sahner \cite{ks}.
Geller and Mayeli \cite{GM1} have constructed nearly tight frames on smooth compact oriented Riemannian manifold $M$ (without boundary) using Laplace-Beltrami operator on $L^2(M)$.
In a subsequent paper \cite{GM2} they have obtained a characterization of Besov spaces on a smooth compact oriented Riemannian manifold, for the full range of indices using smooth, nearly tight frames constructed in \cite{GM1}.
Geller and Pesenson \cite{GP} have constructed band-limited localized Parseval frames for Besov spaces on compact homogeneous manifolds.
Pesenson  has constructed nearly Parseval frames on noncompact symmetric spaces \cite{Pe2} and Parseval frames on sub-Riemannian compact homogeneous manifolds \cite{Pe3}.
Coulhon, Kerkyacharian, Petrushev \cite{CKP} have developed band limited well-localized frames in the general setting of Dirichlet spaces which includes complete Riemannian manifolds with Ricci curvature bounded from below and satisfying the volume doubling property. For 
 a survey on frames on Riemannian manifolds with bounded curvature and their applications to the analysis of function spaces see \cite{FHP}.

In this paper we improve upon these results by showing the existence of smooth Parseval wavelet frames on arbitrary Riemannian manifold $M$. Hence, we eliminate compactness assumption on $M$ needed in the work of Geller et al. \cite{GM1, GM2, GP} or Ricci curvature assumptions and volume doubling property needed in \cite{CKP}, and at the same time improve the construction of nearly tight frames to that of Parseval (tight) wavelet frames on $M$. This construction is made possible thanks to smooth orthogonal projection decomposition of the identity operator on $M$, which is an operator analogue of omnipresent smooth partition of unity subordinate to an open cover $\mathcal U$ of $M$, recently shown by the authors in \cite{BDK}. Our smooth orthogonal decomposition leads naturally to a decomposition of $L^2(M)$ as orthogonal subspaces consisting of functions localized on elements of an open and precompact cover $\mathcal U$. This enables the transfer of local Parseval wavelet frames from the Euclidean space to the manifold $M$ using geodesic maps. The resulting wavelet system, which consists of $C^\infty$ functions localized on geodesic balls, is a Parseval frame in $L^2(M)$. This construction extends to $L^p$ spaces  and yields unconditional dual wavelet frames in $L^p(M)$ for the entire range $1<p<\infty$. This is made possible by the extension of the above mentioned result in \cite{BDK} which yields a decomposition of the identity operator $\mathbf I$ on $L^p(M)$ as a sum of smooth projections $P_U$, which are mutually disjoint
\[
\sum_{U \in \mathcal U} P_U = \mathbf I, \qquad\text{where } P_U\circ P_{U'} =0 \text{ for } U\ne U' \in \mathcal U.
\]

In the case the manifold $M$ is compact we show a characterization of Triebel-Lizorkin and Besov spaces in terms of magnitudes of coefficients of Parseval wavelet frames. Our main theorem is inspired by a result due Triebel \cite{Tr5}, who has shown a characterization of Triebel-Lizorkin and Besov spaces by wavelets on compact manifolds. We improve upon his result in two directions. In contrast to \cite{Tr5}, our characterization allows the smoothness parameter $m$ to take the value $\infty$. Moreover, we employ a single wavelet system, which is used both in analysis and synthesis transforms. Since our wavelet system constitutes a Parseval frame in $L^2(M)$, it automatically yields a reproducing formula.

We achieve this result by proving the boundedness of Hestenes operators on Triebel-Lizorkin spaces on manifolds with bounded geometry. The study of function spaces on manifolds with bounded geometry was initiated by Triebel \cite{Tr1, Tr2}. More precisely, it is assumed that $M$ is a connected complete Riemannian manifold with positive injectivity radius and bounded geometry.
The theory of Triebel-Lizorkin and Besov spaces on such manifolds was further developed by Triebel \cite{Tr4}, Skrzypczak \cite{Sk},  and Gro\ss e and Schneider \cite{Grosse}. Our boundedness result is an extension of analogous result for Sobolev spaces shown in \cite{BDK}. A prototype of this result is due Triebel \cite{Tr4} who showed the boundedness of composition with a global diffeomorphism on Triebel-Lizorkin spaces on $\R^d$. 
We extend his result from the setting of $\R^d$ to the class of Hestenes operators on manifolds with bounded geometry. The proof uses a theorem due to Palais on an extension of local diffeomorphisms and results of Triebel \cite{Tr4} on boundedness of multipliers and diffeomorphisms on $\F^s_{p,q}$ spaces.  

The paper is organized as follows. In Section \ref{S2} we review necessary facts on manifolds $M$ with bounded geometry, results about Hestenes operators, and the definition of Triebel-Lizorkin spaces on $M$. In Section \ref{S3} we show that Hestenes operators are bounded on Triebel-Lizorkin  $\F_{p,q}^s(M)$ and Besov $\B_{p,q}^s(M)$ spaces. In Section \ref{S4} we construct smooth local Parseval frames on $\R^d$ using Daubechies and Meyer wavelets. 
In Section \ref{S5} we construct Parseval wavelet frames in $L^2(M)$ for a general Riemannian manifold $M$ and we show the existence of wavelet unconditional frames in $L^p(M)$ for $1 < p <\infty$. In Section \ref{S6} we show a characterization of Triebel-Lizorkin and Besov spaces on compact manifolds in terms of magnitudes of coefficients of Parseval wavelet frames constructed in the previous section. Finally, technical results characterizing wavelet coefficients of local distributions in Triebel-Lizorkin space $\F^s_{p,q}(\R^d)$ are shown in Section \ref{SL}.

\section{Preliminaries} \label{S2}

In this section we recall the necessary background on manifolds $M$ with bounded geometry such as a covering lemma by geodesic balls, the definition of Triebel-Lizorkin spaces on $M$, and facts about Hestenes operators and compositions of distributions with diffeomorphisms on manifolds. This is motivated by the fact that the definition of Triebel-Lizorkin spaces $\F_{p,q}^s=\F_{p,q}^s(M)$  requires the bounded geometry assumption on a Riemannian manifold $M$, see \cite[Section 7.2]{Tr4}.

\subsection{Bounded geometry}
Let $(M,g)$ be a $d$-dimensional connected complete Riemannian manifold with Riemannian metric tensor $g$. For any $x\in M$, the exponential geodesic map $\exp_x: T_x M \to M$ is a diffeomorphism of a ball $B(0, r) \subset T_xM$ of radius $r > 0$ with center $0$ and some neighborhood $\Omega_x(r)$ of $x$ in $M$. In fact, $\Omega_x(r)=\exp_x(B(0,r))$ is an open ball centered at $x$ and radius $r$ with respect to a geodesic distance on $M$. Denoting by $r_x$ the supremum of possible radii of such balls we define the injectivity radius of $M$ as $r_{inj} = \inf_{x\in M} r_x$. 
 We shall assume that a connected complete Riemannian manifold $M$ has {\it bounded geometry} \cite[Definition 1.1 in Appendix 1]{Sh} meaning that:
\begin{enumerate}
\item $r_{inj}>0$ and
\item every covariant derivative of the Riemann curvature tensor $R$ is bounded, that is, for any $k \in \N_0$, there exists a constant $C_k$ such that $|\nabla^k R| < C_k$. 
\end{enumerate}
The condition (2) can be equivalently formulated, see \cite[Section 7.2.1]{Tr4}, that there exist a positive constant $c$, and for every multi-index $\alpha$, positive constants $c_\alpha$, such that
\[
\det g \ge c \qquad\text{and}\qquad |D^\alpha g_{ij}| \le c_\alpha,
\]
in coordinates of every normal geodesic chart $(\Omega_x(r), i_x \circ \exp_x^{-1})$ for some fixed $0<r<r_{inj}$, where $i_x: T_xM \to \R^d$ is an isometric isomorphism (preserving inner products). The determinant $\det g$ is often abbreviated by $|g|$, see \cite{BDK, He}.

We have the following useful lemma about existence of covers by geodesic balls. A prototype of this lemma can be found in a monograph by Shubin \cite[Lemma 1.2 and 1.3 in Appendix 1]{Sh}, see also  \cite[Proposition 7.2.1]{Tr4}. The fact the multiplicity of the cover does not depend on the radius $r$ was observed by Skrzypczak \cite[Lemma 4]{Sk0}. A similar result can be found in \cite[Lemma 4.1]{FHP}, where a redundant assumption on local doubling property on $M$ was made.

\begin{lemma}\label{male}  Suppose a Riemannian manifold $M$ has bounded geometry. Then, for any $0 < r < r_{inj}/2$, there exists a set 
of points $\{x_j\}$ in $M$ (at most countable)  such that:
\begin{enumerate}[(i)]
\item the balls $\Omega_{x_j}(r/4)$ are disjoint,
\item the balls $\Omega_{x_j}(r/2)$ form a cover of $M$, and
\item for any $l\ge 1$ such that $r l < r_{inj}/2$, the multiplicity of the cover by the balls $\Omega_{x_j} (rl)$ is at most $N(l)$, where the constant $N(l)$ depends only on $l$ and a manifold $M$.
\end{enumerate}

Consequently, there exists a smooth partition of unity $\{\alpha_j\}$ corresponding to the open cover $\{\Omega_{x_j}(r)\}$,
\begin{equation}\label{pu}
\alpha_j\in C^\infty(M),
\qquad 
0 \le \alpha_j \le 1, \qquad \sum_j \alpha_j =1, \qquad \supp \alpha_j \subset  \Omega_{x_j}(r),
\end{equation}
such that for any mult-index $\alpha$, there exists a constant $b_\alpha$, satisfying
\begin{equation}\label{dr}
|D^\alpha(\alpha_j \circ \exp_{x_j} \circ (i_x)^{-1})| \le b_\alpha \qquad\text{for all }j\text{ and  } x\in B(0,r) \subset T_{x_j}M .
\end{equation}
In addition, for a fixed point $x\in M$, there exist $\{x_j\}$ and $\{\alpha_j\}$ satisfying:
\begin{equation}\label{px}
x=x_{j'} \quad\text{for some $j'$}\qquad\text{and}\qquad \alpha_{j'}=1\text{ on }\Omega_{x_{j'}}(r/2).
\end{equation}
\end{lemma}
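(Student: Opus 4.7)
The plan is to produce the points $\{x_j\}$ by a standard maximal disjoint family argument. Since $M$ is second countable, I would use countable recursion (or Zorn's lemma) to select a maximal family $\{x_j\}$ such that the balls $\Omega_{x_j}(r/4)$ are pairwise disjoint; this gives (i) automatically. For (ii), if some $y \in M$ satisfied $y \notin \Omega_{x_j}(r/2)$ for every $j$, then $\dist(y,x_j)\ge r/2$ for all $j$, so $\Omega_y(r/4)$ would be disjoint from every $\Omega_{x_j}(r/4)$, contradicting maximality.

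For (iii), I would use a volume comparison. Bounded geometry yields, via uniform control of the Jacobian of the exponential map in normal coordinates, constants $c_1,c_2>0$ depending only on $M$ such that $c_1\rho^d\le |\Omega_z(\rho)|\le c_2\rho^d$ for every $z\in M$ and every $0<\rho<r_{inj}/2$, where $|\cdot|$ denotes Riemannian volume. Suppose the balls $\Omega_{x_{j_1}}(rl),\ldots,\Omega_{x_{j_N}}(rl)$ all contain a common point $y$. Then $x_{j_i}\in \Omega_y(rl)$, hence the disjoint balls $\Omega_{x_{j_i}}(r/4)$ lie inside $\Omega_y(rl+r/4)$. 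Summing volumes, $N c_1 (r/4)^d \le c_2(rl+r/4)^d$, which bounds $N$ by some $N(l)$ depending only on $l$ and $M$.

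For the partition of unity, I would fix a radial bump $\chi\in C^\infty(\R^d)$ with $0\le \chi\le 1$, $\chi\equiv 1$ on $B(0,r/2)$ and $\supp\chi\subset B(0,r)$, and set $\psi_j=\chi\circ i_{x_j}\circ \exp_{x_j}^{-1}$ on $\Omega_{x_j}(r)$ (extended by zero). Then $S:=\sum_k\psi_k\ge 1$ everywhere by (ii), so $\alpha_j:=\psi_j/S$ is a smooth partition of unity satisfying \eqref{pu}. To establish \eqref{dr}, note that $\psi_j\circ \exp_{x_j}\circ i_{x_j}^{-1}=\chi$ has uniformly bounded derivatives of all orders. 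By (iii) with $l=1$, the sum
\[
S\circ \exp_{x_j}\circ i_{x_j}^{-1}(v)=\sum_k \chi\bigl(i_{x_k}\circ \exp_{x_k}^{-1}\circ \exp_{x_j}\circ i_{x_j}^{-1}(v)\bigr)
\]
involves at most $N(1)$ nonzero terms for $v\in B(0,r)$. Each transition map $i_{x_k}\circ \exp_{x_k}^{-1}\circ \exp_{x_j}\circ i_{x_j}^{-1}$ between two overlapping normal charts has derivatives of every order uniformly bounded in $j,k$, a standard consequence of bounded geometry used in \cite{Tr4, BDK}. Combining these with Faà di Bruno's formula and the pointwise bound $S\ge 1$ yields the uniform estimate \eqref{dr}.

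Finally, to secure \eqref{px}, I would start the maximal family with $x_{j'}=x$, then replace $\alpha_{j'}$ and the other $\alpha_j$ by $\tilde\alpha_{j'}=\beta_0+(1-\beta_0)\alpha_{j'}$ and $\tilde\alpha_j=(1-\beta_0)\alpha_j$ for $j\ne j'$, where $\beta_0=\chi\circ i_x\circ \exp_x^{-1}$ equals $1$ on $\Omega_x(r/2)$ and is supported in $\Omega_x(r)$. These still sum to $1$, preserve the support conditions, equal what is required on $\Omega_x(r/2)$, and satisfy \eqref{dr} by the same argument. The main obstacle is the uniform control in \eqref{dr}, which rests on two ingredients supplied by bounded geometry: the multiplicity bound from (iii), which keeps the number of nonzero summands in $S$ finite uniformly in $j$, together with the uniform $C^\infty$-bounds on the transition maps between normal charts at nearby centers $x_j$ and $x_k$.
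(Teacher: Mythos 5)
Your proof is correct and follows essentially the same route as the paper: a maximal disjoint family of balls $\Omega_{x_j}(r/4)$ for (i)--(ii), a volume-comparison count (disjoint small balls packed into $\Omega_y(rl+r/4)$, with $\operatorname{vol}\Omega_z(\rho)\asymp \rho^d$ from bounded geometry) for (iii), normalized bumps in normal coordinates for the partition of unity, and the same convex-combination modification $\tilde\alpha_{j'}=\eta+(1-\eta)\alpha_{j'}$, $\tilde\alpha_j=(1-\eta)\alpha_j$ for \eqref{px}. The only nitpick is that you state the volume equivalence for $\rho<r_{inj}/2$ but apply it at radius $rl+r/4$, which can reach $5r_{inj}/8$; the same normal-coordinate computation gives the bound for all $\rho<3r_{inj}/4$ (as in the paper), so this is cosmetic.
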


\begin{proof}
Properties (i)--(iii) are a consequence of the proof of \cite[Lemma 1.2]{Sh} and \cite[Lemma 4]{Sk0}. We include details for the sake of completeness.
Take $\epsilon_0=r_{inj}/4$, and hence $3\epsilon_0 < r_{inj}$. Then, for any $r<2\epsilon_0$, we choose a maximal set of disjoint balls $\Omega_{x_j}(r/4)$, for some set 
of points $\{x_j\}$ in $M$. By  \cite[Lemma 1.2]{Sh} the balls $\Omega_{x_j}(r/2)$ form a cover of $M$. Similarly as in the proof of \cite[Lemma 4]{Sk0}, if $rl < r_{inj}/2$, then the multiplicity of the cover by the balls $\Omega_{x_j} (rl )$ is at most $N(l,r)$, 
\begin{equation}\label{N}
N(l,r)= (\sup_{y\in M} \operatorname{vol} \Omega_{y}(r(l+1/4) )(  \inf_{x\in M} \operatorname{vol} \Omega_x(r/4))^{-1}.
\end{equation}
We claim that there exists a constant $C>0$ such that
\begin{equation}\label{bv}
C^{-1} s^d \le \operatorname{vol} \Omega_{x}( s) \le C s^d \qquad\text{for all }x\in M, \ s<3\epsilon_0.
\end{equation}
Indeed, since $M$ has bounded geometry, there exists a constant $c>0$ such that
\[
c^{-1} \le \det g (y) \le c \qquad \text{for all }y\in M,
\]
where $\det g(y)$ denotes the determinant of the matrix whose elements are the components of $g$ in normal geodesic coordinates of a local chart $(\Omega_x(s), i_x \circ \exp_x^{-1})$ such that $y\in \Omega_x(s)$ and $s<3\epsilon_0=3r_{inj}/4$.  Observe that
\[
\operatorname{vol} \Omega_{x}( s) = \int_{B(0,s)}\sqrt{ \det g } \circ \exp_x \circ i_{x}^{-1} d\lambda,
\] 
where $\lambda$ denotes the Lebesgue measure on a ball $B(0,s) \subset \R^d$. Hence, the claim \eqref{bv} follows. By \eqref{N} and \eqref{bv} we have $N(l,r)  \le (4l+1)^dC^2$. Hence, $N(l,r)$ is independent of $r$.

Finally, the existence of a partition of unity satisfying \eqref{dr} is a standard fact, see \cite[Lemma 1.3]{Sh} and \cite[Proposition 7.2.1]{Tr4}. To show the additional part of Lemma \ref{male} we take a smooth function $\eta \in C^\infty(M)$ such that $0\le \eta \le 1$, $\eta=1$ on $\Omega_{x_{j'}}(r/2)$, and $\supp \eta \subset \Omega_{x_{j'}}(r)$. We define another smooth partition of unity $\{\tilde \alpha_j \}$ by
\[
\tilde \alpha_j =
\begin{cases} \alpha_j (1-\eta) & j \ne j',
\\
\alpha_{j'}+\eta(1 -\alpha_{j'}) & j=j'.
\end{cases}
\]
It is immediate that $\{\tilde \alpha_j \}$ satisfies \eqref{pu}. Next we observe that $\tilde \alpha_j =\alpha_j$ for all, but finitely many $j$. To show the analogue of \eqref{dr} for functions $\tilde \alpha_j$ we apply the product formula and we use the fact the support of $\eta$ is compact.
\end{proof}

\subsection{Distributions on {$M$}}
Before defining Triebel-Lizorkin spaces we recall basic definitions of distributions on a smooth Riemannian manifold $M$. We do not need to assume that $M$ has bounded geometry as we only need to know that $M$ is complete to have well-defined exponential geodesic maps (completeness assumption can be avoided if we use more general local charts).

Let $\calD(M)$ be the space of test functions consisting of all compactly supported complex-valued $C^\infty$ functions on $M$. Define the space of distribution $\calD'(M)$ as the space of linear functionals on $\calD(M)$. By \cite[Section 6.3]{Hor} a distribution in $\calD'(M)$ can be identified with the collection of distribution densities indexed by an atlas in $M$ and satisfying certain consistency identity \cite[formula (6.3.4)]{Hor}. We will illustrate how this identification works for distributions which are given as an integration against a locally integrable function.

Let $\nu$ be a Riemannian measure. A locally integrable function $f\in L^1_{loc}(M)$ defines a distribution in $ \calD'(M)$, which is customarily also denoted by $f$,
\[
f(\vp) = \int_M f(u) \vp(u) d\nu(u) \qquad\text{for }\vp \in \calD(M).
\]
 For $x\in M$ we consider a local geodesic chart $(\Omega_x(r),\kappa)$, where  $r=r_x$, $\kappa=\kappa_x=  i_x \circ \exp_x^{-1}$. Then the corresponding family of distribution densities indexed by $\kappa$ is given by
\[
\begin{aligned}
f_\kappa(\phi)& =f(\phi\circ \kappa)=\int_M f(u) \phi(\kappa(u)) d\nu(u)
\\
&=\int_{B(0,r)}  f(\kappa^{-1}(u)) \phi(u) \sqrt{ \det g_\kappa(\kappa^{-1}(u))} du \qquad\text{for }\phi \in \calD(B(0,r)),
\end{aligned}
\]
where $ \det g_\kappa$ denotes the determinant of the matrix whose elements are components of $g$ 
in coordinates of a chart $\kappa$. 
Then we can make an identification of $f_\kappa$ with a function
\begin{equation}\label{fku}
f_\kappa(u)=f(\kappa^{-1}(u)) \sqrt{\det g_\kappa(\kappa^{-1}(u)) } \qquad u\in B(0,r).
\end{equation}
Take two geodesic charts  $(\Omega_x(r),\kappa)$ and $(\Omega_{x'}(r'),\kappa')$ such that $\Omega_x(r)\cap \Omega_{x'}(r')\neq \emptyset$. Let $\psi= \kappa \circ (\kappa')^{-1}$. By \eqref{fku} we have
\begin{equation}\label{LALA}
\frac{f_{\kappa'}(u)}{\sqrt{\det g_{\kappa'}((\kappa')^{-1}(u)) }} =\frac{f_\kappa(\psi(u))}{\sqrt{\det g_\kappa((\kappa')^{-1}(u)) }}
\qquad\text{for } u\in \kappa'(\Omega_x(r) \cap \Omega_{x'}(r')).
\end{equation}
By the chain rule, see \cite[p. 120]{Ch2}, we have
\[
\sqrt{\det g_{\kappa'}(p) } = |\det \nabla \psi(\kappa'(p))|  \sqrt{\det g_{\kappa}(p) }
\qquad\text{for }p\in \Omega_x(r) \cap \Omega_{x'}(r').
\]
Hence, by \eqref{LALA} we obtain the consistency identity \cite[formula (6.3.4)]{Hor} 
\begin{equation}\label{consi}
f_{\kappa'}(u) = |\det \nabla \psi(u) | f_{\kappa}(\psi(u))
\qquad\text{for }u \in \kappa'(\Omega_x(r) \cap \Omega_{x'}(r')).
\end{equation}
Conversely, given a family of integrable functions $\{f_\kappa\}$ satisfying \eqref{consi} we deduce \eqref{LALA}. Applying \eqref{LALA} for $u=\kappa'(p)$ leads to a locally integrable function $f\in L^1_{loc}(M)$ given by
\[
f(p)=\frac{f_\kappa(\kappa(p))}{\sqrt{\det g_\kappa(p) }}= \frac{f_{\kappa'}(\kappa'(p))}{\sqrt{\det g_{\kappa'}(p) }} 
\qquad p\in \Omega_x(r) \cap \Omega_{x'}(r').
\]

Next we define a composition of distribution with a diffeomorphism \cite[Theorem 6.1.2 and Theorem 6.3.4]{Hor}.

\begin{definition}\label{dd}
Let $V \subset M$ and $V' \subset M'$ be open subsets of Riemannian manifolds $M$ and $M'$, respectively. Suppose that $\Phi: V \to V'$ is a $C^\infty$ diffeomorphism and $f\in  \calD'(V')$. Define $f\circ \Phi$ as a distribution in $\calD'(V)$ by 
\[
(f \circ \Phi)(\phi) = f((\phi \circ \Phi^{-1}) | \det \nabla \Phi^{-1}|)
\qquad\text{for }\phi \in \calD(V),
\]
where $|\det \nabla \Phi^{-1}|$ denotes the Jacobian determinant of the differential $\nabla\Phi^{-1}$ acting between tangent spaces of $M'$ and $M$.
\end{definition}

The following lemma shows that the above definition coincides with the usual composition when a distribution is a function.

\begin{lemma}\label{ddd}
Let  $\Phi: V \to V'$ is a $C^\infty$ diffeomorphism between open subsets $V$ and $V'$ of Riemannian manifolds $M$ and $M'$ with Riemannian measures $\nu$ and $\nu'$, respectively. Suppose that $f\in L^1_{loc}(V')$. Then treating $f$ as a distribution in $\calD'(V')$, which is given as an integration of $f$ against $\nu'$, the composition $f \circ \Phi$ is a distribution in $\calD'(V)$, which is given as an integration of the usual composition $f \circ \Phi$ against $\nu$.
\end{lemma}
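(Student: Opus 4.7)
The plan is to unfold the definition of distributional composition, interpret $f$ explicitly as integration against the Riemannian measure $\nu'$, and then apply the Riemannian change of variables formula associated with the diffeomorphism $\Phi$. The Jacobian introduced by Definition \ref{dd} is designed precisely to cancel against the Jacobian produced by the change of variables, so the identification with the ordinary function composition should fall out immediately.

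More concretely, I would fix a test function $\phi \in \calD(V)$ and write
\[
(f \circ \Phi)(\phi) = f\bigl((\phi \circ \Phi^{-1})\,|\det \nabla \Phi^{-1}|\bigr)
= \int_{V'} f(u)\,\phi(\Phi^{-1}(u))\,|\det \nabla \Phi^{-1}(u)|\,d\nu'(u),
\]
using the hypothesis that $f$ acts as integration against $\nu'$. Next I would apply the change of variables $u = \Phi(p)$. The content of this step is the formula
\[
\int_{V'} F(u)\,d\nu'(u) = \int_V F(\Phi(p))\,|\det \nabla \Phi(p)|\,d\nu(p),
\]
where $|\det \nabla \Phi(p)|$ is the intrinsic Jacobian of the differential $\nabla \Phi(p): T_pM \to T_{\Phi(p)}M'$ computed with respect to the inner products induced by $g$ and $g'$. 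Applying this and using $\nabla \Phi^{-1}(\Phi(p)) = (\nabla \Phi(p))^{-1}$, the two Jacobian factors multiply to $1$ and one obtains
\[
(f \circ \Phi)(\phi) = \int_V f(\Phi(p))\,\phi(p)\,d\nu(p),
\]
which is exactly the integration of the ordinary function composition $f \circ \Phi$ against $\nu$.

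The only non-trivial step is justifying the Riemannian change of variables above, because the usual Euclidean change of variables produces the coordinate Jacobian $|\det \nabla(\kappa'\circ \Phi \circ \kappa^{-1})|$ rather than the intrinsic one $|\det \nabla \Phi|$. To handle this cleanly I would work locally, covering $V$ by precompact geodesic charts $(\Omega_x(r),\kappa)$ and pushing forward via $\Phi$ to obtain charts on $V'$; using a smooth partition of unity on $V$ subordinate to this cover reduces the claim to a single chart. Within a chart, the densities $d\nu$ and $d\nu'$ take the form $\sqrt{\det g_\kappa}\,d\lambda$ and $\sqrt{\det g_{\kappa'}}\,d\lambda$, and a short linear algebra computation — the same one that produces the factor in \eqref{consi} — gives
\[
|\det \nabla (\kappa' \circ \Phi \circ \kappa^{-1})(\kappa(p))| \sqrt{\det g_{\kappa'}(\kappa'(\Phi(p)))} = |\det \nabla \Phi(p)| \sqrt{\det g_{\kappa}(\kappa(p))},
\]
converting the Euclidean change of variables into the intrinsic one. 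The bulk of the work is therefore book-keeping with local charts and a partition of unity; the cancellation of Jacobians and the final identification are immediate.
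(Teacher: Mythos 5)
Your argument is correct and follows essentially the same route as the paper: unfold Definition \ref{dd}, apply the intrinsic change of variables formula on Riemannian manifolds, and observe that the Jacobian factors cancel. The only difference is that the paper simply cites the change of variables formula (Chavel, Theorem I.3.4) where you sketch its proof via local charts and a partition of unity; your chart-level identity relating the coordinate and intrinsic Jacobians is the correct one.
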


\begin{proof}
Take any $\phi \in \calD(V)$. Then by Definition \ref{dd} and the change of variables formula on Riemannian manifold \cite[Theorem I.3.4]{Ch} we have
\[
\begin{aligned}
(f \circ \Phi)(\phi) = f((\phi \circ \Phi^{-1}) | \det \nabla \Phi^{-1}|) &=
\int_{V'} f(x) (\phi \circ \Phi^{-1})(x) | \det \nabla \Phi^{-1}(x)| d\nu'(x)
\\
& = \int_{V} f(\Phi(x)) \phi (x) d\nu(x).
\end{aligned}
\]
Hence, the distribution $f\circ \Phi$ coincides with a locally integrable function $f \circ \Phi \in L^1_{loc}(V)$.
\end{proof}

\subsection{Triebel-Lizorkin spaces}

We adapt the following definition of Triebel-Lizorkin spaces on Riemannian manifolds \cite[Definition 7.2.2]{Tr4}. Note that additionally we need to assume that $r<r_{inj}/8$, see \cite[Remark 7.2.1/2]{Tr4}.
 
 \begin{definition}\label{tlm}
 Let $M$ be a connected complete Riemannian manifold $M$ with bounded geometry.  Let $s \in \R$ and let $0 < p < \infty$ and $0<q \le \infty$. Then,
\begin{equation}\label{tlm0}
\F^s_{p,q}(M)=\{ f\in \calD'(M): \left(\sum_{j=1}^\infty \|\alpha_j f \circ \exp_{x_j} \circ i_{x_j}^{-1} ||_{\F^s_{p,q}(\R^d)}^p \right)^{1/p} <\infty \}.
\end{equation}
\end{definition}
Note that we interpret
$ \alpha_j f \circ (\exp_{x_j} \circ i_{x_j}^{-1})$ as a composition of a distribution $\alpha_j f$ on $M$ with a diffeomorphism $\exp_{x_j} \circ i_{x_j}^{-1}  : B(0,r) \to \Omega_{x_j}(r)$, see Definition \ref{dd}.  Hence, it is a compactly supported distribution in $\calD'(B(0,r))$, which can be extended by setting zero outside of $B(0,r) \subset \R^n$. Consequently,  we obtain a tempered distribution in $\calS'(\R^n)$ and the spaces  $\F_{p,q}^s(M)$ are defined locally using  $\F_{p,q}^s(\R^d)$ norm.
For the proof that this definition coincides in the case $M=\R^d$, see \cite[Proposition 7.2.2]{Tr4}. Moreover, the above definition is independent of the choice of the cover $\{\Omega_{x_j}(r)\}$ and the corresponding partition of unity $\{\alpha_j\}$ in Lemma \ref{male}, see  \cite[Theorem 7.2.3]{Tr4}.

\subsection{Hestenes operators}

Next we recall the definition of Hestenes operators \cite[Definition 1.1]{BDK} and their localization \cite[Definition 2.1]{BDK}.

\begin{definition}\label{H}
Let $M$ be a smooth connected Riemannian manifold (without boundary). Let $\Phi:V \to V'$ be a $C^\infty$ diffeomorphism between two open subsets $V, V' \subset M$. Let $\vp: M \to \R$ be a 
compactly supported 
$C^\infty$ function such that 
\[
\supp \vp =\ov{\{x\in M : \vp(x) \ne 0\}} \subset V.
\]
We define a simple $H$-operator $H_{\vp,\Phi,V}$ acting on a  function $f:M\to\C$ by
\begin{equation}\label{HeHe}
H_{\vp,\Phi,V}f(x) = \begin{cases} \vp(x) f(\Phi(x)) & x\in V
\\
0 & x\in M \setminus V.
\end{cases}
\end{equation}
Let $C_0(M)$ be the space of continuous complex-valued functions on $M$ that are vanishing at infinity, which is equipped with the supremum norm. Clearly, a simple $H$-operator induces a continuous linear map of the space $C_0(M)$ into itself. We define an $H$-operator to be a finite combination of such simple $H$-operators. The space of all $H$-operators is denoted by $\mathcal H(M)$.
\end{definition}

\begin{definition}\label{localized}
 We say that an operator $T \in \mathcal H(M)$ is {\it localized} on an open set $U \subset M$, if it is a finite combination of simple $H$-operators $H_{\vp,\Phi,V}$ satisfying $V\subset U$ and $\Phi(V) \subset U$.
\end{definition}

\begin{remark}\label{prec}
Note that every $H\in \mathcal H(M)$ has a representation which is localized on an open and precompact set since we assume that $\varphi$ in Definition \ref{H} is compactly supported.
\end{remark}

In \cite[Theorem 2.6]{BDK} we have shown that $H$-operators are bounded on $C^r(M)$ spaces and Sobolev spaces without any assumption on the geometry of $M$. 

\begin{theorem}\label{crm}
Suppose that $H \in \mathcal H(M)$ is localized on open and precompact set  $U \subset M$. Then, for any $r=0,1,\ldots$, the operator $H$ induces a bounded linear operator
\begin{align}\label{crm1}
&H: C^r(M) \to C^r(M), \qquad\text{where }r=0,1,\ldots,
\\
\label{crm2}
&H:W^r_p(M)\to W^r_p(M), \qquad\text{where }1\le p < \infty,\  r=0,1,\ldots.
\end{align}
\end{theorem}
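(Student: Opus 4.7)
By linearity, it suffices to prove both conclusions for a single simple $H$-operator $H = H_{\vp,\Phi,V}$ with $\supp \vp \subset V \subset U$ and $\Phi(V) \subset U$. The plan is to reduce boundedness on $M$ to classical estimates on $\R^d$ via a finite system of coordinate charts that jointly trivialize $\Phi$ on the support of $\vp$.

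First I would cover the compact set $\supp \vp$ by finitely many local charts $(\Omega_\ell, \kappa_\ell)$ of $M$, refining if necessary so that for each $\ell$ there is a chart $(\Omega'_\ell, \kappa'_\ell)$ containing $\Phi(\Omega_\ell \cap \supp \vp)$; this is possible because $\supp \vp$ is compact and $U$ is precompact, so both $\supp \vp$ and $\Phi(\supp \vp)$ are compact. Choose a smooth partition of unity $\{\eta_\ell\}$ on $M$ subordinate to $\{\Omega_\ell\}$ with $\sum_\ell \eta_\ell \equiv 1$ on a neighborhood of $\supp \vp$. Then $H=\sum_\ell H_\ell$, where $H_\ell f = (\eta_\ell \vp)\cdot(f\circ \Phi)$, and each $H_\ell$ is supported in $\Omega_\ell$. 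Setting $\psi_\ell = \kappa'_\ell \circ \Phi \circ \kappa_\ell^{-1}$, which is a smooth diffeomorphism between open subsets of $\R^d$, the operator $H_\ell$ reads in coordinates as
\[
(H_\ell f)\circ \kappa_\ell^{-1}(y) = \big((\eta_\ell \vp)\circ \kappa_\ell^{-1}\big)(y) \cdot \big(f \circ (\kappa'_\ell)^{-1}\big)(\psi_\ell(y)),
\]
i.e.\ multiplication by a fixed $C^\infty_c$ function followed by pullback under $\psi_\ell$.

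For \eqref{crm1}, iterated application of the Leibniz and chain rules bounds every partial derivative of order $\le r$ of the left-hand side pointwise by a constant times the maximum of $|D^\alpha (f\circ(\kappa'_\ell)^{-1})|$ for $|\alpha|\le r$ over the compact set $\kappa'_\ell(\Phi(\supp \vp\cap \supp \eta_\ell))$. Since on any precompact region the $C^r(M)$-norm is equivalent to a finite sum of local Euclidean $C^r$-norms via the charts, this yields boundedness on $C^r(M)$. For \eqref{crm2}, the same local representation combined with the standard change of variables on $\R^d$ gives the Sobolev bound: $|\det \nabla \psi_\ell|$ is bounded away from $0$ and $\infty$ on the compact set $\kappa_\ell(\supp \vp\cap \supp \eta_\ell)$, so composition with $\psi_\ell$ is bounded on $W^r_p$ of the relevant region, and multiplication by a fixed $C^\infty_c$ function is likewise bounded on $W^r_p$. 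Summing the finitely many $H_\ell$ and transferring local Euclidean Sobolev norms back to $W^r_p(M)$ via the same chart equivalence completes the argument.

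The main point to watch is not a deep obstacle but a bookkeeping one: the covers must be refined so that for every $\ell$ both the source $\Omega_\ell \cap \supp \vp$ and its image under $\Phi$ lie in coordinate charts simultaneously, and one must verify that on the precompact set $U$ the manifold norms of $C^r$ and $W^r_p$ are equivalent to finite sums of their Euclidean chart counterparts uniformly for functions supported in $U$. No curvature or bounded geometry assumption is needed here because compactness of $\supp \vp$ together with precompactness of $U$ makes all the derivatives and Jacobians involved uniformly bounded on the relevant sets.
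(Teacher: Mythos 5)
The paper does not actually prove Theorem \ref{crm}: it is quoted verbatim from \cite[Theorem 2.6]{BDK}, so there is no in-text proof to compare against. Your argument --- reduce by linearity to a simple $H$-operator, cover the compact set $\supp \vp$ by finitely many charts chosen so that $\Phi$ maps each piece into a chart, split $H$ by a partition of unity, and then invoke the Leibniz and chain rules together with a change of variables whose Jacobian is bounded above and below on the relevant compact sets --- is the natural localization proof and is sound; it is presumably close in spirit to the argument in \cite{BDK}. The two points you flag yourself are indeed the only ones requiring care: (a) for functions supported in (or restricted to) a fixed precompact set, the intrinsic $C^r$ and $W^r_p$ norms are equivalent to finite sums of Euclidean chart norms, which holds because the metric coefficients, their derivatives, the Christoffel symbols, and $\sqrt{\det g}$ are all bounded above and (for $\det g$) below on compact sets; and (b) for \eqref{crm2} one uses that multiplication by a fixed $C^\infty_c$ function and composition with a $C^\infty$ diffeomorphism with nonvanishing Jacobian are bounded on $W^r_p$ of bounded Euclidean domains, which is classical and applies to Sobolev (not necessarily continuous) $f$ since the chain rule holds in the distributional sense. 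I see no gap.
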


Our goal is to extend Theorem \ref{crm} to Triebel-Lizorkin $F^s_{p,q}(M)$ spaces. To achieve this we need to define the action of Hestenes operators on distributions $\calD'(M)$. We shall use \cite[Lemma 2.12 and Corollary 2.13]{BDK} about adjoints of $H$-operators.

\begin{lemma}
\label{hest.adj}
Let $U \subset M$ be 
 an open and precompact subset of $M$. The following statements hold.
\begin{enumerate}[(i)]
\item
Let
$\Phi:V \to V'$ be a $C^\infty$ diffeomorphism between two open subsets $V, V' \subset U$  and let
$\vp: M \to \R$ be a $C^\infty$ be function such that 
\[
\supp \vp =\ov{\{x\in M : \vp(x) \ne 0\}} \subset V.
\]
The adjoint of the operator $H=H_{\vp,\Phi,V}$ is
  $ H^*=H_{\vp_1 ,\Phi^{-1},V'}$,
where
\[
\vp_1(y)=
\begin{cases}
\vp(\Phi^{-1}(y))\psi_1(y)& y\in V',
\\
0 & y\notin V'.
\end{cases}
\]
and $\psi_1$ is any $C^\infty(M)$ function such that
\[
\psi_1(y)=|\det \nabla\Phi^{-1}(y)|\qquad\text{for } y\in \Phi(\supp \vp),
\]
where $\nabla \Phi$ denotes the Jacobian linear map corresponding to $\Phi$ of tangent spaces of $M$.
That is, $H^*$  is a simple $H$-operator  localized on $U$ satisfying
\begin{equation}\label{adj}
\int_M H (f ) (x) g(x) d\nu(x) = \int_M f(y) H^* (g)(y) d\nu(y) \qquad\text{for all } f,g\in C_0(M),
\end{equation}
where $\nu$ is the Riemannian measure on $M$.
\item
Let $H\in  \mathcal H(M)$  be localized on open and precompact set $U$. That is, $H =  \sum_{i=1}^m H_i$, where each $H_i= H_{\vp_i, \Phi_i ,V_i}$ 
is a simple $H$-operator satisfying $V_i, \Phi_i(V_i) \subset U$.
Then, the adjoint $H^* = \sum_{i=1}^m (H_i)^* \in  \mathcal H(M)$ is  localized on  $U$ and \eqref{adj} holds. In particular, the action of $H^*$ on $C_0(M)$ does not depend 
on a representation of $H$ as a combination of simple $H$-operators.
\end{enumerate}
\end{lemma}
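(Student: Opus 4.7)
The plan is to establish part (i) by a direct change of variables and then deduce part (ii) by linearity. For part (i), I would first verify that the proposed $\vp_1$ is a well-defined element of $C^\infty(M)$ with compact support in $V'$. The function $y\mapsto \vp(\Phi^{-1}(y))$ is smooth on $V'$ with support in the compact set $\Phi(\supp\vp)\subset V'$, so its product with any smooth $\psi_1\in C^\infty(M)$ extends by zero to a smooth, compactly supported function on $M$. The existence of a $\psi_1$ coinciding with $|\det\nabla\Phi^{-1}|$ on a neighborhood of $\Phi(\supp\vp)$ is routine: take a cutoff $\chi\in C_c^\infty(V')$ equal to $1$ near $\Phi(\supp\vp)$ and set $\psi_1=\chi\cdot|\det\nabla\Phi^{-1}|$, extended by zero outside $V'$. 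Off $\Phi(\supp\vp)$ the factor $\vp(\Phi^{-1}(y))$ vanishes, so the particular choice of extension is immaterial on the support of $\vp_1$.

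With $\vp_1$ in hand, the adjoint identity \eqref{adj} reduces to
\[
\int_V \vp(x) f(\Phi(x)) g(x)\, d\nu(x) = \int_{V'} f(y)\vp(\Phi^{-1}(y)) g(\Phi^{-1}(y)) |\det\nabla\Phi^{-1}(y)|\, d\nu(y),
\]
which is the change-of-variables formula on Riemannian manifolds already invoked in the proof of Lemma \ref{ddd} (see \cite[Theorem I.3.4]{Ch}), applied to the continuous integrand $f\cdot(\vp g)\circ\Phi^{-1}$ pushed forward through $\Phi$. By the very definition of $\vp_1$ the right-hand side equals $\int_M f(y) H^*(g)(y)\, d\nu(y)$, proving \eqref{adj}. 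Since $V'\subset U$ and $\Phi^{-1}(V')=V\subset U$, the operator $H^*=H_{\vp_1,\Phi^{-1},V'}$ is a simple $H$-operator localized on $U$.

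For part (ii), I would define $H^*:=\sum_{i=1}^m (H_i)^*$. Each $(H_i)^*$ is a simple $H$-operator localized on $U$ by part (i), so $H^*\in\mathcal H(M)$ is localized on $U$, and the identity \eqref{adj} for $H^*$ follows by summing the corresponding identities for the $(H_i)^*$. Independence of representation then follows from \eqref{adj}: if $H=\sum_i H_i=\sum_j \tilde H_j$ as operators on $C_0(M)$, then for every $f,g\in C_0(M)$,
\[
\int_M f\cdot\Bigl(\sum_i(H_i)^* g-\sum_j(\tilde H_j)^* g\Bigr)\,d\nu = \int_M\Bigl(\sum_i H_i(f)-\sum_j \tilde H_j(f)\Bigr) g\,d\nu = 0.
\]
The bracketed function on the left lies in $C_0(M)$, and testing against approximate identities in $C_c^\infty(M)$ forces it to vanish pointwise.

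The only mildly delicate step is the construction of $\vp_1$: one must simultaneously ensure global smoothness on $M$, compact support in $V'$, and agreement with the Jacobian on $\Phi(\supp\vp)$. Once the freedom in choosing $\psi_1$ off $\Phi(\supp\vp)$ is exploited, the remainder reduces to linearity combined with the Riemannian change-of-variables theorem, both of which are already in hand.
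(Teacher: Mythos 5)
Your proof is correct, and it is the standard argument: the paper itself does not reprove this lemma but imports it from \cite[Lemma 2.12 and Corollary 2.13]{BDK}, where the content is exactly the Riemannian change-of-variables computation you carry out, together with the cutoff construction of $\psi_1$. Note that your computation applies verbatim to $f,g\in C_0(M)$ (not just $C_c(M)$) because the integrand $\vp(x)f(\Phi(x))g(x)$ is compactly supported thanks to $\supp\vp\Subset V$ -- this is precisely the point the paper addresses in the remark following the lemma, so your version subsumes it.
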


Note that the formula \eqref{adj} was initially shown in \cite{BDK} for $f,g\in C_c(M)$, but it also holds for $f,g\in C_0(M)$. Indeed, if $H=H_{\vp,\Phi,V}$ is a simple $H$-operator localized on $U$, then by choosing $\alpha \in C_c(M)$ such that $\alpha(x)=1$ for all $x\in U$, we have $H(f) = H(\alpha f)$ for all $f\in C_0(M)$ and $H^*(g)=H^*(\alpha g)$ for all $g\in  C_0(M)$. Hence, \eqref{adj} follows for simple $H$-operators and then for arbitrary $H$-operators.

\begin{definition}\label{dist}
For $f\in \calD'(M)$ define the action of an $H$-operator $H$ on $\calD(M)$ by
\[
Hf(\psi) =  f(H^*\psi) \qquad \text{for } \psi \in \calD(M).
\]
\end{definition}

Lemma \ref{hest.adj} implies that Hestenes operator $H^*\in \mathcal H(M)$  is well defined and continuous as a mapping $H^*:\calD(M) \to \calD(M)$.  Hence, $H$ is a well-defined mapping $H:\calD'(M)\to \calD'(M)$.
In case $M=\R^d$, we have also that $
H^*: \calS(\R^d) \to \calS(\R^d)$ is  well defined and continuous. Hence, the formula from Definition \ref{dist} extends to $f\in \calS'(\R^d)$ and  $H: \calS'(\R^d) \to \calS'(\R^d)$ is well-defined.

It is convenient to express Definition \ref{dist} in terms of a composition of a distribution in $\calD'(M)$ with a diffeomorphism.

\begin{lemma}\label{comp}
Let $H=H_{\vp,\Phi,V}$ be a simple $H$-operator on $M$. Let $f\in \calD'(M)$. Then,
\begin{equation}\label{comp0}
Hf(\psi) = [\vp (f \circ \Phi)](\psi) \qquad\text{for all }\psi \in \calD(M).
\end{equation}
\end{lemma}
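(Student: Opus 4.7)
The plan is to unwind both sides of \eqref{comp0} into expressions involving $f$ evaluated at a particular test function, and to check that those test functions coincide. Everything will follow from chaining Definition \ref{dist} with Lemma \ref{hest.adj}(i) on the left, and Definition \ref{dd} on the right.

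First I would start from the left-hand side. By Definition \ref{dist}, $Hf(\psi) = f(H^*\psi)$. By Lemma \ref{hest.adj}(i), $H^* = H_{\vp_1, \Phi^{-1}, V'}$ with $V' = \Phi(V)$ and
\[
\vp_1(y) = \vp(\Phi^{-1}(y))\,\psi_1(y) \qquad \text{for } y\in V',
\]
where $\psi_1 \in C^\infty(M)$ agrees with $|\det \nabla \Phi^{-1}|$ on $\Phi(\supp \vp)$. Applying \eqref{HeHe} to $H^*$, we obtain
\[
H^*\psi(y) = \vp(\Phi^{-1}(y))\,\psi_1(y)\,\psi(\Phi^{-1}(y)) \qquad \text{for } y\in V',
\]
and $H^*\psi(y)=0$ otherwise.

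Next I would unpack the right-hand side. The product $\vp(f\circ\Phi)$ makes sense as a distribution on $M$ because $\supp \vp \subset V$ and $f\circ \Phi \in \calD'(V)$ (by Definition \ref{dd}). Thus
\[
[\vp(f\circ\Phi)](\psi) = (f\circ\Phi)(\vp\psi),
\]
where $\vp\psi \in \calD(V)$ since $\supp(\vp\psi)\subset \supp \vp \subset V$. Applying Definition \ref{dd} with the test function $\vp\psi$ gives
\[
(f\circ\Phi)(\vp\psi) = f\bigl(((\vp\psi)\circ \Phi^{-1})\,|\det \nabla\Phi^{-1}|\bigr),
\]
where the argument on the right is understood as a function on $V'$ extended by zero to all of $M$; its support lies in $\Phi(\supp\vp)$, which is a compact subset of $V'$, so this extension is indeed in $\calD(M)$.

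The proof then reduces to the pointwise identity
\[
H^*\psi(y) = \vp(\Phi^{-1}(y))\,\psi(\Phi^{-1}(y))\,|\det \nabla\Phi^{-1}(y)|\qquad\text{for all } y\in M.
\]
On $\Phi(\supp\vp)$ this is immediate since $\psi_1 = |\det\nabla\Phi^{-1}|$ there. On $V'\setminus \Phi(\supp\vp)$ the factor $\vp(\Phi^{-1}(y))$ vanishes, so both sides are zero regardless of the particular choice of $\psi_1$; outside $V'$ both sides are zero by the respective definitions. The only genuine subtlety is precisely this reconciliation of the cutoff function $\psi_1$ with the exact Jacobian, together with the bookkeeping of extensions by zero between $V$, $V'$ and $M$; but in both expressions only values on $\supp\vp$ (resp.\ $\Phi(\supp\vp)$) contribute, so the identity of test functions, and hence \eqref{comp0}, follows by applying $f$ to both sides.
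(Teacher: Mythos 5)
Your proof is correct and follows essentially the same route as the paper: unwind $Hf(\psi)=f(H^*\psi)$ via Definition \ref{dist} and Lemma \ref{hest.adj}(i), unwind the right-hand side via Definition \ref{dd}, and match the resulting test functions. Your explicit reconciliation of the cutoff $\psi_1$ with $|\det\nabla\Phi^{-1}|$ on $\Phi(\supp\vp)$ is a point the paper passes over silently, but it is the same argument.
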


\begin{proof}
Since $\Phi: V \to V'$ is a diffeomorphism and $f\in  \calD'(M)$, $f\circ \Phi$ is a distribution in $\calD'(V)$ by Definition \ref{dd} as a composition of a distribution with a diffeomorphism. In the case $f\in \calD'(M)$ is a function, then $f\circ \Phi$ is the usual composition by Lemma \ref{ddd}.  Take any $\psi \in \calD(M)$, $\supp \psi \subset V$. Applying respectively Definition \ref{dist}, Lemma \ref{hest.adj}, and  Definition \ref{dd} yields
\[
\begin{aligned}
Hf(\psi) =  f(H^*\psi) &= f( (\vp \circ \Phi^{-1}) |\det \nabla\Phi^{-1}| (\psi \circ \Phi^{-1}) )
\\
&
=f( ((\vp \psi) \circ \Phi^{-1}) |\det \nabla \Phi^{-1}|  )
= (f \circ \Phi)((\vp \psi)|_{V}).
\end{aligned}
\]
Since $\supp \vp \subset V$, we can extend a distribution $\vp (f \circ \Phi)$ to $\calD'(M)$ by setting zero outside of $V \subset M$. Hence, \eqref{comp0} follows.
\end{proof}

\section{Hestenes operators in Triebel-Lizorkin spaces}\label{S3}

In this section we show that Hestenes operators are bounded on Triebel-Lizorkin spaces $\F_{p,q}^s(M).$  A prototype of this result is due Triebel \cite[Theorem 4.2.2]{Tr4} who showed the boundedness of a global diffeomorphism on Triebel-Lizorkin spaces on $\R^d$.
We extend this result to the class of Hestenes operators from the setting of $\R^d$ to manifolds with bounded geometry. The proof uses a theorem due Palais on an extension of local diffeomorphisms and results of Triebel on boundedness of multipliers and diffeomorphisms on $\F^s_{p,q}$ spaces.

Recall that for  $\F_{p,q}^s(\R^d)$ space we have a topological embedding
\[
\calS(\R^d) \subset \F_{p,q}^s(\R^d) \subset \calS'(\R^d),
\]
see \cite[Remark 2.3.2/2]{Tr4}.
Hence,  $H$ is well-defined on $ \F_{p,q}^s(\R^d)$, $H: \F_{p,q}^s(\R^d)\to \calS'(\R^d)$.
 We also have the following 
topological embedding, see \cite[Theorem 7.4.2(i)]{Tr4}
\[
\calD(M) \subset \F_{p,q}^s(M) \subset \calD'(M).
\]
Hence,  $H$ is well-defined on $ \F_{p,q}^s(M)$, $H: \F_{p,q}^s(M)\to \calD'(M)$. Our goal is to show that $H$ is bounded on $ \F_{p,q}^s(M)$ spaces.

\begin{theorem}\label{hesten}
Let M be a connected complete
Riemannian manifold with bounded geometry. 
Let 
$0<p<\infty$, $0<q\leq \infty$, and $s\in \R$. 
Suppose that $H \in \mathcal H(M)$. 
Then the operator $H$ induces a bounded linear operator
\[
H:\F_{p,q}^s(M) \to \F_{p,q}^s(M).
\]
\end{theorem}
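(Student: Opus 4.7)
By linearity and Remark \ref{prec} it suffices to treat a simple $H$-operator $H=H_{\vp,\Phi,V}$ localized on an open precompact set $U\subset M$. Since $\supp\vp\subset V\subset U$ and $\Phi(\supp\vp)\subset U$ by Definition \ref{localized}, Lemma \ref{male}(iii) together with precompactness of $U$ ensures that the index sets
\[
J_1=\{j:\Omega_{x_j}(r)\cap\supp\vp\ne\emptyset\},\qquad J_2=\{j':\Omega_{x_{j'}}(r)\cap\Phi(\supp\vp)\ne\emptyset\}
\]
are both finite. For $j\notin J_1$, Lemma \ref{comp} yields $\alpha_j Hf=(\alpha_j\vp)(f\circ\Phi)=0$, so only finitely many indices $j\in J_1$ contribute to the quasi-norm \eqref{tlm0}.

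Fix $j\in J_1$. Inserting the partition of unity $\sum_{j'}\alpha_{j'}\equiv 1$ on the target side of $\Phi$ and applying Lemma \ref{comp} again gives
\[
\alpha_j Hf=\sum_{j'\in J_2}(\alpha_j\vp)\cdot\bigl((\alpha_{j'}f)\circ\Phi\bigr).
\]
For each relevant pair $(j,j')$ define the local diffeomorphism of open subsets of $\R^d$
\[
\Phi_{j,j'}:=\kappa_{j'}^{-1}\circ\Phi\circ\kappa_j,\qquad \kappa_j:=\exp_{x_j}\circ i_{x_j}^{-1},
\]
and pull the previous identity through $\kappa_j$, using the chain rule built into Definition \ref{dd}, to obtain the distributional identity on $\R^d$
\[
(\alpha_j Hf)\circ\kappa_j=\sum_{j'\in J_2}\psi_{j,j'}\cdot\bigl(((\alpha_{j'}f)\circ\kappa_{j'})\circ\Phi_{j,j'}\bigr),
\]
where $\psi_{j,j'}:=(\alpha_j\vp)\circ\kappa_j\in C^\infty_c(B(0,r))$ has all derivatives uniformly bounded by Lemma \ref{male}.

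The plan is to invoke Palais' extension theorem, exactly as in \cite{BDK}, to extend $\Phi_{j,j'}$ from a neighborhood of $\supp\psi_{j,j'}$ to a global $C^\infty$ diffeomorphism $\tilde\Phi_{j,j'}:\R^d\to\R^d$ with all derivatives of $\tilde\Phi_{j,j'}$ and $\tilde\Phi_{j,j'}^{-1}$ bounded. Since the two maps coincide on $\supp\psi_{j,j'}$, $\Phi_{j,j'}$ may be replaced by $\tilde\Phi_{j,j'}$ in the formula above. Triebel's diffeomorphism theorem \cite[Theorem 4.2.2]{Tr4} then gives boundedness of $g\mapsto g\circ\tilde\Phi_{j,j'}$ on $\F^s_{p,q}(\R^d)$, while the pointwise multiplier theorem of \cite{Tr4} controls multiplication by the test function $\psi_{j,j'}$. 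Combining these with the quasi-triangle inequality over the finite set $J_2$ yields
\[
\|(\alpha_j Hf)\circ\kappa_j\|_{\F^s_{p,q}(\R^d)}^p\le C\sum_{j'\in J_2}\|(\alpha_{j'}f)\circ\kappa_{j'}\|_{\F^s_{p,q}(\R^d)}^p,
\]
and summing over the finite set $J_1$ gives $\|Hf\|_{\F^s_{p,q}(M)}\le C'\|f\|_{\F^s_{p,q}(M)}$.

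The main obstacle I expect is justifying the Palais-extension step with derivative bounds admissible in Triebel's diffeomorphism theorem: the extension $\tilde\Phi_{j,j'}$ must satisfy the derivative hypotheses of \cite[Theorem 4.2.2]{Tr4}, and this relies on the bounded-geometry assumption to obtain uniform control of $\kappa_j^{\pm 1}$, together with the fact that $\Phi$ is $C^\infty$ on the precompact set $U$ and hence has all derivatives bounded there. A second subtlety is the rigorous distributional interpretation of the pullback identity for $(\alpha_j Hf)\circ\kappa_j$, which must be handled through Definition \ref{dd} and Lemma \ref{comp} rather than pointwise computation.
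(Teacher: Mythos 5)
Your proposal is correct and follows essentially the same route as the paper: reduction to a simple $H$-operator, localization by the geodesic partition of unity of Lemma \ref{male}, pullback to $\R^d$ via $\Phi_k=\exp_{x_k}^{-1}\circ\Phi\circ\exp_{x_j}$, and Palais--Lew extension combined with Triebel's multiplier and diffeomorphism theorems (the paper packages the $\R^d$ step as a separate Theorem \ref{composition}). The two technicalities you flag are resolved exactly as you anticipate: the paper inserts an extra cutoff $\eta_k$ equal to $1$ on $\supp \alpha_j \cap \Phi^{-1}(\supp\alpha_k)$ and supported in $\Omega_j\cap\Phi^{-1}(\Omega_k)$ so that the multiplier is supported where the local and globally extended diffeomorphisms agree, and, since Lemma \ref{diffeo} only extends the diffeomorphism from a small ball around each point, it covers the support of the multiplier by finitely many such balls with a further partition of unity; no uniformity over $j$ from bounded geometry is needed at this stage because only finitely many charts meet the precompact set $U$.
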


To prove this theorem we need  \cite[Theorem 4.2.2]{Tr4} about pointwise multipliers
\begin{theorem}\label{multiplier}
Let 
$0<p<\infty$, $0<q\leq \infty$, and $s\in \R$. If $m \in \N$ is sufficiently large, then there exists a constant $C_m$ such that for all $\vp \in C^m(\R^d)$ and $f\in F_{p,q}^s(\R^d)$
\[
\|\vp f ||_{\F_{p,q}^s(\R^d) }\leq C_m \sum_{|\alpha|\leq m} \|D^\alpha \vp||_{L^\infty(\R^d)} \|f ||_{\F_{p,q}^s(\R^d)}.
\]
\end{theorem}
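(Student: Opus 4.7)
This is Triebel's pointwise multiplier theorem on $\F_{p,q}^s(\R^d)$, cited as \cite[Theorem 4.2.2]{Tr4}; I would not reprove it from first principles, and the shortest route in the paper is simply to invoke that reference. If one wishes to reproduce the argument, the plan is to combine the Littlewood--Paley characterization of $\F_{p,q}^s$ with Bony's paraproduct decomposition and the Fefferman--Stein vector-valued maximal inequality. Fix a smooth dyadic resolution of unity $\{\psi_j\}_{j \geq 0}$ on the Fourier side of $\R^d$, with $\supp \psi_j \subset \{|\xi| \sim 2^j\}$ for $j \geq 1$, giving the equivalent norm
\[
\|f\|_{\F^s_{p,q}(\R^d)} \sim \Big\| \Big(\sum_{j \geq 0} 2^{jsq} |\psi_j(D) f|^q \Big)^{1/q} \Big\|_{L^p(\R^d)}.
\]

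Then decompose $\vp f = \Pi_{LH}(\vp,f) + \Pi_{HL}(\vp,f) + \Pi_{HH}(\vp,f)$, corresponding in the double sum $\sum_{j,k} \psi_j(D)\vp \cdot \psi_k(D) f$ to $j \leq k-2$, $j \geq k+2$, and $|j-k| \leq 1$ respectively. For $\Pi_{LH}$ each summand has Fourier support concentrated in $\{|\xi| \sim 2^k\}$, so the pointwise bound $|S_{k-2}(D) \vp| \leq \|\vp\|_{L^\infty}$ reduces matters to the Littlewood--Paley square function of $f$, producing control by $\|\vp\|_{L^\infty} \|f\|_{\F^s_{p,q}}$. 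For $\Pi_{HL}$, Bernstein's inequality gives $\|\psi_j(D) \vp\|_{L^\infty} \lesssim 2^{-jm} \sum_{|\alpha| \leq m} \|D^\alpha \vp\|_{L^\infty}$, which for $m$ sufficiently large relative to $s$ produces a summable geometric factor in $j$. The remaining resonance term $\Pi_{HH}$ is handled by summing contributions over all scales and applying the Fefferman--Stein vector-valued maximal inequality together with a Peetre maximal function bound.

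The main obstacle is the resonance term $\Pi_{HH}$: unlike the other two, it is not frequency-localized to any single dyadic shell, so one must carefully sum contributions from all scales $j \sim k$ and exploit the finite smoothness of $\vp$ to close the estimate in the $\ell^q(L^p)$ norm. This is precisely why the required $m$ depends on all of $s$, $p$, $q$, $d$, and it is the delicate step when $p$ or $q$ is small, where the Fefferman--Stein inequality just barely suffices. The same argument also delivers the Besov version needed later for $\B^s_{p,q}(\R^d)$, by replacing $L^p(\ell^q)$ with $\ell^q(L^p)$ throughout.
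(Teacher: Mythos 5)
The paper offers no proof of this statement: it is quoted directly from Triebel \cite[Theorem 4.2.2]{Tr4}, with only a follow-up remark on how large $m$ must be taken ($m>s$ when $s>d/p$, and $m>2d/p-s$ otherwise), so your decision to invoke that reference rather than reprove it is exactly what the paper does. Your paraproduct sketch is a standard and essentially sound outline of how such pointwise multiplier theorems are established, but it is supplementary to, not a divergence from, the paper's treatment.
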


It is known, see  remarks in the proof of  \cite[Theorem 4.2.2]{Tr4}, that  $m>s$ works
in the case $s>d/p$. In the case $-\infty<s\leq d/p$, Theorem \ref{multiplier} holds for $m>2d/p-s$.

For a smooth mapping $G:\R^d \to \R^d$ we denote Jacobian matrix  by $\nabla G(x)$.
We also need  \cite[Theorem 4.3.2]{Tr4}.
For simplicity we state this theorem for $C^\infty$-diffeomorphism.

\begin{theorem}\label{diffem}
Let 
$0<p<\infty$, $0<q\leq \infty$, and $s\in \R$. Let 
\[
G=(G_1,\ldots,G_d) :\R^d \to \R^d
\] be a $C^\infty$-diffeomorphism with all bounded derivatives, i.e.
for all multi-indices $\alpha$, there is $C(\alpha)$ such that
for all $x\in \R^d$ and  $j=1,\ldots,d$,
\[
|D^\alpha G_j(x)|\leq C(\alpha).
\]
Assume that there is a constant $c>0$ such that for all $x\in \R^d$
\[
|\det \nabla G(x) |>c.
\]
Then   $f \to f\circ G$ is an isomorphic mapping of $\F_{p,q}^s(\R^d)$ onto itself. 
\end{theorem}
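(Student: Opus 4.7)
The plan is to prove the theorem in three steps, first symmetrising in $G$ versus $G^{-1}$, then treating a favourable range of $s$ directly, and finally bootstrapping to all $s\in\R$ via lifting operators and the multiplier Theorem \ref{multiplier}.

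First I would observe that $G^{-1}$ inherits exactly the structural hypotheses of $G$. Differentiating the identity $G\circ G^{-1}=\mathrm{id}$ gives $\nabla G^{-1}(y)=(\nabla G(G^{-1}(y)))^{-1}$; Cramer's rule together with the lower bound $|\det \nabla G|>c$ and boundedness of $\nabla G$ yields a uniform bound on $\nabla G^{-1}$ and the lower bound $|\det \nabla G^{-1}|>c'$. Higher derivatives of $G^{-1}$ are then controlled inductively by Fa\`a di Bruno's formula applied to $G\circ G^{-1}=\mathrm{id}$. Consequently, it is enough to prove that the composition operator $C_G:f\mapsto f\circ G$ maps $\F_{p,q}^s(\R^d)$ boundedly into itself; then $C_{G^{-1}}$ is also bounded by the same argument and $C_G C_{G^{-1}} = \mathrm{id} = C_{G^{-1}} C_G$ on $\F_{p,q}^s(\R^d)$.

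Second, I would handle a favourable range of $s$ where an intrinsic description is available. Using the local means characterisation of $\F_{p,q}^s(\R^d)$, the norm of $f\circ G$ is controlled by
\[
\bigl\|k_0(\cdot,1) * (f\circ G)\bigr\|_{L^p} + \Bigl\| \Bigl(\sum_{j\ge 1} 2^{jsq} |k(\cdot,2^{-j}) * (f\circ G)|^q\Bigr)^{1/q}\Bigr\|_{L^p},
\]
and the change of variables $y=G(x)$ transforms $k(\cdot,t) * (f\circ G)(x)$ into an integral against a kernel $\tilde k(\cdot,\cdot,t)$ obtained by pulling back $k$ through $G$. The bounded derivatives and bounded Jacobian of $G$ force $\tilde k$ to satisfy size, moment and smoothness conditions of the same quality (up to uniform constants) as those of $k$, so it is an admissible local means kernel. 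One then recovers the $\F_{p,q}^s$ norm of $f$ up to a Jacobian change of variable, which is uniformly comparable to Lebesgue measure. This gives the boundedness of $C_G$ on $\F_{p,q}^s(\R^d)$ for $s$ in the range where the local means characterisation applies cleanly.

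Third, I would extend to all $s\in\R$ by a lifting argument combined with Theorem \ref{multiplier}. The lift $I^{2N}=(I-\Delta)^N$ is an isomorphism $\F_{p,q}^s \to \F_{p,q}^{s-2N}$ for every $N\in\N$. Applying the Leibniz and chain rules, $I^{2N}(f\circ G)$ decomposes into a finite sum
\[
I^{2N}(f\circ G) = \sum_{|\beta|\le 2N} b_\beta \cdot \bigl((D^\beta f)\circ G\bigr),
\]
where each $b_\beta$ is a smooth function built polynomially from derivatives of $G$ up to order $2N$, hence is bounded in $C^m(\R^d)$ for every $m$. Theorem \ref{multiplier} then controls pointwise multiplication by $b_\beta$ on $\F_{p,q}^{s-2N}$, reducing the claim on $\F_{p,q}^s$ to boundedness of $C_G$ on $\F_{p,q}^{s-2N}$ applied to the distributions $D^\beta f$. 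Choosing $N$ large (resp.\ small/negative via the inverse lift) one lands in the range already treated in the previous step. The main obstacle is the low-regularity regime $s\le \sigma_{p,q}$, where the standard characterisation by differences fails; this is precisely where the lifting/multiplier bootstrap is indispensable, and one must keep careful track of the fact that the multiplier constant in Theorem \ref{multiplier} depends on finitely many derivatives of $b_\beta$, which are uniformly controlled by the bounded-geometry hypothesis on $G$.
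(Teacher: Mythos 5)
For the record, the paper offers no proof of this statement: it is quoted directly from Triebel \cite[Theorem 4.3.2]{Tr4} (``We also need \cite[Theorem 4.3.2]{Tr4}. For simplicity we state this theorem for $C^\infty$-diffeomorphism.''), so the only comparison available is with Triebel's own argument, whose broad shape --- a direct proof in a restricted range of $s$ via an intrinsic characterization, followed by a reduction to that range using lifts and pointwise multipliers --- your outline does resemble. Your Step 1 (that $G^{-1}$ inherits the hypotheses, so it suffices to prove boundedness of $f\mapsto f\circ G$) is correct and complete. The two remaining steps, however, each contain an unproved assertion, and one of them is set up in the wrong direction.

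In Step 2, the claim that the pulled-back kernel satisfies ``moment conditions of the same quality'' is not true as stated: if $k$ has vanishing moments up to order $2N-1$, the kernel $y\mapsto k\bigl(t^{-1}(x-G^{-1}(y))\bigr)\,|\det\nabla G^{-1}(y)|$ in general has none, and the resulting family is no longer of convolution type. Making this step rigorous --- for instance by linearizing $G$ at the base point (affine maps do preserve moment conditions) and absorbing the quadratic remainder at small scales, or by invoking a local-means characterization tolerant of $x$-dependent kernels with only approximate moments --- is the technical heart of the whole theorem and cannot be dispatched in one sentence; you also need to state explicitly which range of $s$ this step covers, since Step 3 hinges on that. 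In Step 3, the identity $I^{2N}(f\circ G)=\sum_{|\beta|\le 2N}b_\beta\,(D^\beta f)\circ G$ bounds $\|f\circ G\|_{\F^s_{p,q}}$ by $\sum_\beta\|(D^\beta f)\circ G\|_{\F^{s-2N}_{p,q}}$, i.e.\ it reduces the claim at smoothness $s$ to the claim at the \emph{lower} smoothness $s-2N$. Since you say the directly treated range is where the difference characterization works (large $s$) and the problematic regime is $s\le\sigma_{p,q}$, this reduction runs the wrong way, and ``the inverse lift'' does not repair it: there is no Leibniz-type formula for the nonlocal operator $(I-\Delta)^{-N}$ applied to a composition. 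The correct reduction writes $f=(I-\Delta)^N h$ with $h=(I-\Delta)^{-N}f\in\F^{s+2N}_{p,q}$, uses the inverse Jacobian to convert derivatives of $h$ evaluated at $G(x)$ into derivatives of $h\circ G$ at $x$, so that $f\circ G=\sum_{|\gamma|\le 2N}c_\gamma\,D^\gamma(h\circ G)$ with bounded smooth coefficients $c_\gamma$, and then applies the already-known case to $h$, the boundedness of $D^\gamma:\F^{s+2N}_{p,q}\to\F^{s+2N-|\gamma|}_{p,q}\hookrightarrow\F^{s}_{p,q}$, and Theorem \ref{multiplier}. With those two repairs your plan becomes a viable proof, essentially the one in \cite{Tr4}.
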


We also need a theorem on an extension of local diffeomorphisms due to Palais
\cite[Theorem 5.5]{Pal1}, \cite{Pal2}.

\begin{lemma}\label{diffeo}
Let $V$, $V' \subset \R^d$ be  open sets and let $\Phi:V \to V'$ be a diffeomorphism. Then  for every $x\in V$, there is $\delta>0$ such that:
\begin{enumerate}[(i)]
\item $\Phi$ has an extension from a ball $B(x,\delta)\subset V$ to a global diffeomorphism 
\[
G=(G_{1},\ldots,G_{d}):\R^d \to \R^d,
\]
\item for all multi-indices $\alpha$, there exist constants $C(\alpha)$ such that
for all $y\in \R^d$ and all $j=1,\ldots,d$,
\[
|D^\alpha G_{j}(y)|\leq C(\alpha),
\]
\item there exists a constant $c>0$ such that for all $y\in \R^d$
\[
|\det \nabla G(y) |>c.
\]
\end{enumerate}
\end{lemma}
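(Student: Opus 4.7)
The plan is to build $G$ by smoothly interpolating between $\Phi$ on a small ball around $x$ and an affine extension to all of $\R^d$, using a cutoff supported in a slightly larger ball. Fix $x\in V$, set $L:=\nabla\Phi(x)$ (an invertible linear map, since $\Phi$ is a diffeomorphism) and $y_0:=\Phi(x)$. Choose $\delta_0>0$ so that $\overline{B(x,2\delta_0)}\subset V$ and define the Taylor remainder
\[
R(y)=\Phi(y)-y_0-L(y-x),\qquad y\in B(x,2\delta_0).
\]
Then $R(x)=0$ and $\nabla R(x)=0$, so by Taylor's theorem $|R(y)|\le C|y-x|^2$ and $|\nabla R(y)|\le C|y-x|$ on $B(x,2\delta_0)$; all higher derivatives of $R$ are uniformly bounded on $\overline{B(x,\tfrac{3}{2}\delta_0)}$.

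Fix a cutoff $\eta\in C_c^\infty(\R^d,[0,1])$ with $\eta\equiv 1$ on $B(0,1)$ and $\supp\eta\subset B(0,2)$. For $\delta\in(0,\delta_0]$ to be chosen, define
\[
G(y)=y_0+L(y-x)+\eta((y-x)/\delta)\, R(y),\qquad y\in\R^d,
\]
with the convention that the last term is zero outside $B(x,2\delta)$. Since $\eta$ and all its derivatives vanish on $\R^d\setminus B(0,2)$, $G$ is a well-defined smooth map on $\R^d$ that equals $\Phi$ on $B(x,\delta)$ and equals the invertible affine map $y\mapsto y_0+L(y-x)$ outside $B(x,2\delta)$. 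The chain rule yields
\[
\nabla G(y)=L+\eta((y-x)/\delta)\, \nabla R(y)+\delta^{-1} R(y)\otimes (\nabla\eta)((y-x)/\delta),
\]
and on $B(x,2\delta)$ we have $|R(y)|\le C\delta^2$ and $|\nabla R(y)|\le C\delta$, so the perturbation of $L$ is of order $\delta$ uniformly in $y$. An analogous bookkeeping shows that for each multi-index, every factor $\delta^{-k}$ produced by differentiating the cutoff is absorbed by a factor of at least $\delta^{2-k}$ coming from $R$ and its derivatives, so all partial derivatives of $G$ are uniformly bounded on $\R^d$, proving (ii). Choosing $\delta$ small enough that $\|\nabla G(y)-L\|<\tfrac12\|L^{-1}\|^{-1}$ for every $y\in\R^d$, a Neumann series argument gives invertibility of $\nabla G(y)$ with $|\det\nabla G(y)|\ge c>0$ uniformly, establishing (iii).

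It remains to show that $G$ is globally bijective, which is the main obstacle: the inverse function theorem applied pointwise yields only that $G$ is a local diffeomorphism. We invoke Hadamard's global inverse function theorem, which asserts that a $C^1$ map of $\R^d$ to itself with everywhere-invertible Jacobian that is also proper is a smooth diffeomorphism onto $\R^d$. Properness of $G$ is immediate: outside $B(x,2\delta)$ the map $G$ coincides with the invertible affine map $y\mapsto y_0+L(y-x)$, so $|G(y)|\to\infty$ as $|y|\to\infty$. Combining properness with the pointwise invertibility of $\nabla G$ already established then yields (i). The delicate balance underlying the whole construction is the competition between the quadratic smallness $|R|=O(\delta^2)$ and the cutoff derivative cost $O(\delta^{-1})$; without the vanishing $\nabla R(x)=0$, i.e., without linearizing at the correct point, the interpolation could fold back on itself and destroy injectivity before Hadamard's criterion could be applied.
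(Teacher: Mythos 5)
Your construction is essentially the one the paper uses (following Lew): after the paper's normalization $x=0$, $\Phi(0)=0$, $\nabla\Phi(0)=\mathbf I$, the paper's map $G=\lambda\Phi+(1-\lambda)\,\mathrm{id}$ is exactly your ``affine part plus cutoff times Taylor remainder,'' so the two proofs share the same key idea that $R$ and $\nabla R$ vanish at the center, making the perturbation of the linearization $O(\delta)$ in $C^1$. The one genuine difference is how global bijectivity is concluded: the paper gets it from the uniform estimate $|\nabla G-\mathbf I|<\ve$ via Lew's perturbation-of-the-identity argument, whereas you invoke Hadamard's global inverse function theorem using properness (immediate since $G$ is affine outside a compact set) together with pointwise invertibility of $\nabla G$. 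Both routes are valid; yours imports a bigger theorem but avoids reproducing the injectivity estimate. Two small remarks: your bookkeeping claim for higher derivatives is overstated --- if all $k\ge 3$ derivatives fall on the cutoff you get $\delta^{-k}\cdot O(\delta^2)$, which is \emph{not} bounded uniformly in $\delta$ --- but this is harmless because (ii) only requires boundedness for the finally chosen $\delta$, and that is automatic for a smooth map equal to an affine map outside a compact set. Also, for the Neumann-series step you want $\|\nabla G(y)-L\|$ small relative to $\|L^{-1}\|^{-1}$, which your $O(\delta)$ estimate does provide; the conclusion (iii) then follows since $\det$ is continuous and bounded away from zero on the resulting compact set of matrices.
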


\begin{proof} It is convenient to follow the proof of Palais' theorem given by Lew \cite{Lew} in the setting of Banach spaces. Without loss of generality, by affine change of variables, we can assume that $x=0$, $\Phi(0)=0$, and $\nabla \Phi(0)={\mathbf I}$. Let $\eta \in C^\infty([0,\infty))$ be such that $\eta (u)=1$ for $u\in [0,1]$, $\eta(u)=0$ for $u\geq 2$, and $|\eta'(u)|<3/2$ for all $u$.
Following \cite{Lew}, the extension of a local diffeomorphism $\Phi:V \to V'$ in a neighborhood of $0$ is given for appropriate choice of $\delta>0$ by the formula
\begin{equation}\label{lew1}
G(x)=\lambda (x) \Phi(x)+(1-\lambda (x)) x,\qquad x\in \R^d,
\end{equation}
where $\lambda(x)=\eta(\|x\|/\delta)$. Part (i) is shown in \cite{Lew} as a consequence of the fact that for every $\ve>0$, there exists $\delta>0$, such that 
\begin{equation}\label{lew2}
|\nabla G(x) - \mathbf I | < \ve \qquad x\in \R^d.
\end{equation}
Part (ii) follows from \eqref{lew1}  since $G(x)=x$ for $|x|>2\delta$. Finally, (iii) follows immediately from \eqref{lew2}.
\end{proof}

Combining the above results yields the following theorem. 

 \begin{theorem}\label{composition}
 Let 
$0<p<\infty$, $0<q\leq \infty$, and $s\in \R$. 
Suppose that $H \in \mathcal H(\R^d)$. 
Then the operator $H$ induces a bounded linear operator
\[
H:\F_{p,q}^s(\R^d) \to \F_{p,q}^s(\R^d).
\]
\end{theorem}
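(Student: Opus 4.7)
The plan is to deduce Theorem \ref{composition} from Theorems \ref{multiplier} and \ref{diffem} by using the Palais extension Lemma \ref{diffeo} to replace the local diffeomorphism $\Phi$ by a finite collection of global ones. By linearity of the definition of $\mathcal H(\R^d)$ it suffices to treat a single simple Hestenes operator $H = H_{\vp, \Phi, V}$, which I fix; write $K = \supp \vp$, a compact subset of $V$.

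First I would cover $K$ by finitely many Palais balls. For each $x \in K$, Lemma \ref{diffeo} furnishes $\delta_x > 0$ and a global $C^\infty$ diffeomorphism $G_x : \R^d \to \R^d$ that coincides with $\Phi$ on $B(x, \delta_x) \subset V$ and has all derivatives bounded and Jacobian bounded away from zero. Using compactness of $K$, select finitely many $x_1, \ldots, x_N \in K$ such that the balls $B(x_i, \delta_{x_i}/2)$ cover $K$, and pick a smooth partition of unity $\{\psi_i\}_{i=1}^N$ with $\psi_i \in C_c^\infty(B(x_i, \delta_{x_i}))$ and $\sum_i \psi_i \equiv 1$ on a neighborhood of $K$. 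Setting $\vp_i = \vp \psi_i$ yields $\vp = \sum_i \vp_i$, whence $H = \sum_{i=1}^N H_i$ with $H_i = H_{\vp_i, \Phi, V}$ and $\supp \vp_i \subset B(x_i, \delta_{x_i}) \cap V$.

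The core step is to identify $H_i f$, a priori a distribution on $\R^d$ obtained by extending by zero outside $V$, with the globally defined $\vp_i (f \circ G_{x_i})$. By Theorem \ref{diffem} the composition $f \circ G_{x_i}$ lies in $\F^s_{p,q}(\R^d) \subset \calS'(\R^d)$, and since $\vp_i \in C_c^\infty(\R^d)$ the product $\vp_i (f \circ G_{x_i})$ is a well-defined tempered distribution. Because $G_{x_i}$ coincides with $\Phi$ on $B(x_i, \delta_{x_i}) \supset \supp \vp_i$, the test function $(\vp_i \psi) \circ G_{x_i}^{-1} \,|\det \nabla G_{x_i}^{-1}|$ agrees with $(\vp_i \psi) \circ \Phi^{-1} \,|\det \nabla \Phi^{-1}|$ on its support for every $\psi \in \calD(\R^d)$; unwinding Definition \ref{dd} and Lemma \ref{comp} then gives
\[
\bigl(\vp_i (f \circ G_{x_i})\bigr)(\psi) = f\bigl((\vp_i \psi) \circ G_{x_i}^{-1} \,|\det \nabla G_{x_i}^{-1}|\bigr) = H_i f(\psi).
\]
Theorems \ref{diffem} and \ref{multiplier} (the latter applied to the $C_c^\infty$ multiplier $\vp_i$) now supply a constant $C_i$ such that
\[
\|H_i f\|_{\F^s_{p,q}(\R^d)} = \|\vp_i (f \circ G_{x_i})\|_{\F^s_{p,q}(\R^d)} \le C_i \|f\|_{\F^s_{p,q}(\R^d)},
\]
and summation over $i = 1, \ldots, N$ completes the proof.

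The main obstacle I anticipate is the distributional identification in the core step. The composition $f \circ \Phi$ is a priori only defined in $\calD'(V)$, whereas $f \circ G_{x_i}$ is a tempered distribution on $\R^d$; it is the multiplication by $\vp_i$ that localizes both objects to the common ball $B(x_i, \delta_{x_i})$, where the two change-of-variables prescriptions agree. Once this compatibility is carefully unwound through the adjoint formalism of Lemma \ref{hest.adj} and Lemma \ref{comp}, the remainder is a direct application of the two Triebel theorems.
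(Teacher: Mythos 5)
Your proposal is correct and follows essentially the same route as the paper's own proof: reduce to a simple Hestenes operator, use the Palais extension lemma together with compactness of $\supp\vp$ and a subordinate partition of unity to write $H=\sum_i H_{\vp\psi_i,\Phi,V}$, identify each piece with $\vp\psi_i\,(f\circ G_{x_i})$, and conclude via the multiplier and diffeomorphism theorems of Triebel. Your extra care with the distributional identification of $H_i f$ with the globally defined $\vp_i(f\circ G_{x_i})$ is a welcome elaboration of a step the paper treats briefly, but it is not a different argument.
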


\begin{proof}
 Since  an $H$-operator is a finite combination of simple $H$-operators, it is sufficient to prove the theorem for
a simple $H$-operator $H_{\vp,\Phi,V}$.
Without loss of generality, by Remark \ref{prec} we can assume that $H$ is localized on an open and precompact set  $U \subset \R^d$ satisfying
\[
\supp \vp \subset V \subset U, \quad \Phi(V) \subset U,
\]
where $\Phi:V \to V'$ is a $C^\infty$ diffeomorphism between two open subsets $V, V' \subset U$.
By Lemma \ref{diffeo} a local diffeomorphism $\Phi$  can be extended into a global diffeomorphism. That is, for every $x\in V$ there is a neighborhood (a ball)  $V_x\subset V$ and there is a constant $c_x>0$ and a $C^\infty$-diffeomorphism  with all bounded derivatives
\[
G_x :\R^d \to \R^d
\]
such that 
\[
\Phi(y)=G_x(y), \qquad y\in V_x
\]
and  
\[
|\det \nabla G_x(y) |>c_x \qquad y\in \R^d.
\]
Since $\supp \vp$ is a compact set, there is a finite set $\{x_j\}_{j=1}^N \subset V$ such that
\[
\supp \vp \subset \bigcup_{j=1}^N V_{x_j}.
\]
Thus for an open cover of $\R^d$ 
consisting of sets  $V_{x_j}, j=1,...,N$, and the set $\R^d\setminus \supp \vp$,
there is a smooth partition of unity $\alpha_j: \R^d \to \R$, $j=1,\ldots N+1$, such that for all $1\leq j\leq N+1$
\[
\supp \alpha_j \subset V_{x_j}
\]
and 
\[
\sum_{j=1}^N \alpha_j (y) = 1, \quad y\in \supp \vp.
\]
Note that for $f\in \calS(\R^d)$, and hence for $f\in \calS'(\R^d)$, we have
\[
H_{\vp,\Phi,V}f = \sum_{j=1}^N H_{\vp\alpha_j,\Phi,V} f.
\]
On the other hand, for all $1\leq j\leq N$,
\[
H_{\vp\alpha_j,\Phi,V}f=H_{\vp\alpha_j,G_{x_j},\R^d} f=\vp\alpha_j (f\circ G_{x_j}).
\]
This finishes the proof since $H_{\vp\alpha_j,G_{x_j},V}$ is a composition of operators satisfying assumptions of Theorem \ref{multiplier} and Theorem \ref{diffem}.
\end{proof}

We are ready to give the proof of Theorem \ref{hesten}.

\begin{proof}[Proof of Theorem \ref{hesten}] Without loss of generality, we assume that we have a simple $H$-operator $H_{\vp,\Phi,V}$ localized on a precompact set $U$. Let 
\begin{equation}\label{2r}
2r< \min\big\{ r_{inj}/8,\dist(\supp \vp,\partial V), \dist (\Phi(\supp \vp),\partial V') \big\}.
\end{equation}
Let $\{\alpha_j\}_j$ be
a partition of unity  subordinate to
uniformly locally finite cover of $M$ by a sequence of open
balls $\Omega_j=\Omega_{x_j}(r)$ as in Lemma \ref{male}.

Let $f\in \F_{p,q}^s(M)$.
We claim that it is sufficient to prove that for all $j$ such that $\Omega_j\cap \supp \vp \neq \emptyset$, there is  constant $C_j>0$ such that 
\begin{equation}\label{pe}
\|(\alpha_j \vp (f\circ \Phi))\circ \exp_{x_j} ||_{\F_{p,q}^s(\R^d)}
\leq
C_j \sum_{k\in I_j} \|(\alpha_k f )\circ \exp_{x_k}||_{\F_{p,q}^s(\R^d)},
\end{equation}
where
\[
I_j=\{k: \supp \alpha_j \cap \Phi^{-1}(\supp \alpha_k)\neq \emptyset \quad \& \quad \supp \vp \cap \Phi^{-1}(\supp \alpha_k) \neq \emptyset\}.
\]
For simplicity in \eqref{pe} we identify $\R^d$ with $T_{x_j}M$ and hence we omit the isometric isomorphism $i_{x_j}: T_{x_j}M \to \R^d$ as in Definition \ref{tlm}. 
Indeed, by Lemma \ref{comp} and \eqref{pe} we have
\begin{equation}\label{pe2}
\|(\alpha_j Hf ) \circ \exp_{x_j} ||_{\F_{p,q}^s(\R^d)}
\leq
C_j \sum_{k\in I_j} \|(\alpha_k f )\circ \exp_{x_k}||_{\F_{p,q}^s(\R^d)}.
\end{equation}
Since $\supp \vp$ is compact we have only finite number of $j$ such that
\begin{equation}\label{pes}
\supp \vp \cap \Omega_j \neq \emptyset
\end{equation}
and $I_j$ is finite. Otherwise, $\alpha_j Hf=0$. This is a consequence of the fact that $\Omega_j$ form a uniformly locally finite cover of $M$. Raising \eqref{pe2} to the power $p$ and summing over $j\in\N$ shows that $H$ is bounded on $ \F_{p,q}^s(M)$.

To prove \eqref{pe} note that for fixed $j$ we have
 \begin{equation}\label{pe3}
\begin{aligned}
(\alpha_j \vp (f\circ \Phi))\circ \exp_{x_j} &=
\sum_{k\in I_j}  (\alpha_j \vp(\alpha_k f)\circ \Phi))\circ \exp_{x_j}
\\
&=\sum_{k\in I_j} ((\alpha_j \vp)\circ \exp_{x_j})( (\alpha_k f)\circ \exp_{x_k} \circ \exp_{x_k}^{-1} \circ \Phi\circ \exp_{x_j}).
\end{aligned}
\end{equation}
By definition  $B(0,r)=\exp_{x_k}^{-1}(\Omega_k)$ is a ball of radius $r$ in $ \R^d$.
Note that by \eqref{2r} and \eqref{pes} we have $\Omega_j \subset V$ and $\Omega_k \subset V'$ for each $k\in I_j$. Hence,
$\Phi_k=\exp_{x_k}^{-1} \circ \Phi\circ \exp_{x_j}$ is a well-defined $C^\infty$-diffeomorphism
\[
\Phi_k : V_k=\exp_{x_j}^{-1}( \Omega_j \cap \Phi^{-1}(\Omega_k)) \to V_k'=\exp_{x_k}^{-1}( \Phi(\Omega_j) \cap \Omega_k).
\]
 Now we take a function
$\eta_k\in C^\infty(M)$ such that
\[
\eta_k(x)=1, \quad x\in \supp \alpha_j \cap \Phi^{-1}(\supp \alpha_k)
\]
and
\[
\supp \eta_k \subset  \Omega_j \cap \Phi^{-1}(\Omega_k).
\]
Since $k\in I_j$ then  by \eqref{pe3}
 \[
(\alpha_j \vp (f\circ \Phi))\circ \exp_{x_j}=
\sum_{k\in I_j} ((\eta_k \alpha_j \vp)\circ \exp_{x_j}) ((\alpha_k f)\circ \exp_{x_k} \circ \Phi_k)
\]
Defining new functions
\[
\tilde{\vp_k}=(\eta_k \alpha_j \vp)\circ \exp_{x_j}
\]
we have
 \begin{equation}\label{pe4}
(\alpha_j \vp (f\circ \Phi))\circ \exp_{x_j}=
\sum_{k\in I_j} \tilde{\vp_k}  ((\alpha_k f)\circ \exp_{x_k} \circ \Phi_k).
\end{equation}
The function $\tilde{\vp_k}$ and the distribution $\alpha_k f \circ \exp_{x_k}$ are defined locally on $B(0,r)$, but we can take their extension to $\R^d$ putting zero outside of $B(0,r)$. The presence of $\eta_k$ guarantees that $\supp \tilde{\vp_k} \subset V_k$ and it makes sense to consider a simple $H$-operator $H_{\tilde{\vp_k},\Phi_k,V_k}$, which is localized on $B(0,r) \subset \R^n$.
Now we apply Theorem \ref{composition} using Lemma \ref{comp}
\[
H_{\tilde{\vp_k},\Phi_k,V_k}(\alpha_k f \circ \exp_{x_k})= \tilde{\vp_k}  ((\alpha_k f)\circ \exp_{x_k} \circ \Phi_k).
\]
Hence, for some constant $C_k$, which is independent of $f$, we have
\[
\|
\tilde{\vp_k}  ((\alpha_k f)\circ \exp_{x_k} \circ \Phi_k)||_{\F_{p,q}^s(\R^d)}
\le C_k \|\alpha_k f \circ \exp_{x_k} ||_{\F_{p,q}^s(\R^d)}.
\]
Summing over $k\in I_j$ and using \eqref{pe4} yields \eqref{pe}, which completes the proof of Theorem \ref{hesten}.
\end{proof}
 
As a corollary of Theorem \ref{hesten} we can deduce the boundedness of Hestenes operators on Besov spaces. Recall that Besov spaces $\B_{p,q}^s(M)$ on manifolds $M$ with bounded geometry are introduced indirectly using real interpolation method of quasi-Banach spaces, see \cite[Section 7.3]{Tr4},
\begin{equation}\label{ri}
\B_{p,q}^s(M) = (\F_{p,p}^{s_0}(M), \F^{s_1}_{p,p}(M))_{\theta,q}, \qquad s=(1-\theta)s_0+\theta s_1, \ -\infty<s_0<s<s_1<\infty.
\end{equation}
By the functorial property of real interpolation we deduce that Hestenes operator are bounded on Besov $\B_{p,q}^s(M)$ spaces.

\begin{corollary}\label{hestenB}
Let M be a connected complete
Riemannian manifold with bounded geometry. 
Let $0<p<\infty$, $0<q\leq \infty$, and $s\in \R$. 
Then any operator $H \in \mathcal H(M)$ induces a bounded linear operator
$
H:\B_{p,q}^s(M) \to \B_{p,q}^s(M).
$
\end{corollary}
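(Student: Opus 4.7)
The plan is to deduce Corollary \ref{hestenB} directly from Theorem \ref{hesten} by exploiting the interpolation-theoretic definition \eqref{ri} of Besov spaces on manifolds with bounded geometry. The approach is standard: once $H$ is known to be bounded on both endpoints of a real interpolation couple, the functorial property of the real method transfers the boundedness to the interpolation space.

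First, fix $0<p<\infty$, $0<q\le\infty$, and $s\in\R$. Choose any $s_0,s_1\in\R$ with $-\infty<s_0<s<s_1<\infty$, and let $\theta\in(0,1)$ be determined by $s=(1-\theta)s_0+\theta s_1$, so that \eqref{ri} gives
\[
\B_{p,q}^s(M)=(\F_{p,p}^{s_0}(M),\F_{p,p}^{s_1}(M))_{\theta,q}.
\]
By Theorem \ref{hesten} applied with the parameter triple $(p,p,s_i)$ for $i=0,1$, the operator $H$ is bounded on each endpoint space, i.e.
\[
H:\F_{p,p}^{s_i}(M)\to \F_{p,p}^{s_i}(M),\qquad i=0,1.
\]

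Next, I verify that $(\F_{p,p}^{s_0}(M),\F_{p,p}^{s_1}(M))$ is a compatible quasi-Banach couple and that $H$ acts consistently on both. Compatibility follows from the continuous embeddings $\F_{p,p}^{s_i}(M)\hookrightarrow \calD'(M)$ recorded in Section \ref{S3}; both spaces sit inside the common Hausdorff topological vector space $\calD'(M)$. Consistency of $H$ on the two endpoints is immediate from Definition \ref{dist}, which defines $H$ as a single mapping $\calD'(M)\to\calD'(M)$; its restrictions to the two Triebel-Lizorkin spaces coincide on their intersection.

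Finally, I invoke the functorial property of the real interpolation method for quasi-Banach couples: if a linear operator $T$ acts boundedly on each component of a compatible couple, then it acts boundedly on every real interpolation space $(\cdot,\cdot)_{\theta,q}$ with quasi-norm bounded by the geometric mean of the endpoint norms (see, e.g., \cite[Section 1.2.4]{Tr4}). Applying this to $T=H$ yields the desired boundedness
\[
H:\B_{p,q}^s(M)\to\B_{p,q}^s(M),
\]
completing the proof. The only potential obstacle is the verification of compatibility and consistency of $H$ on the couple, but this is handled at once by working in the common ambient space $\calD'(M)$, so no essential difficulty arises beyond what has already been established for the Triebel-Lizorkin scale.
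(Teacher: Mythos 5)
Your proof is correct and follows essentially the same route as the paper: the paper likewise deduces the corollary from Theorem \ref{hesten} via the interpolation formula \eqref{ri} and the functorial property of the real interpolation method for quasi-Banach couples. Your additional verification of compatibility and consistency of $H$ inside the common ambient space $\calD'(M)$ is a sound (and slightly more careful) elaboration of the same argument.
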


\section{Local Parseval Frames}\label{S4}

In this section we introduce the concept of smooth local Parseval frames on $\R^d$ and show their existence using Daubechies and Meyer wavelets. The main result of the section is Theorem \ref{localframe}, which is a local counterpart of the construction of wavelets on Euclidean space $\R^d$. It enables us to extend the construction of wavelets from the setting of $\R^d$ to manifolds and generalize a characterization of Triebel-Lizorkin and Besov spaces by wavelet coefficients. As in the classical case, we use compactly supported Daubechies wavelets when the smoothness parameter $m$ is finite and Meyer wavelets when $m=\infty$. The key part of the proof of Theorem \ref{localframe} is technical Lemma \ref{koniecprawie}, the proof of which is postponed to Section \ref{SL}.

We need to introduce the following notation.
Let $Q=(-1,1)^d$ and $\ep>0$. Let $Q_\ep=(-1-\ep,1+\ep)^d$. 
Let $E'=\{0,1\}^d$ be the vertices of the unit cube and let $E=E' \setminus \{0\} $ be the set of nonzero vertices.
For a fixed $j_0 \in \N_0$ we define
\[
E_{j}= \begin{cases} E' & \text{for }j=j_0,\\
E & \text{for } j>j_0.
\end{cases}
\]
Let $\chi_A$ be the characteristic function of a set $A$.

\begin{theorem}\label{localframe}
For given $m\in \N\cup \{\infty\}$, $j_0 \in \N_0$, and $\ep>0$, there exist a set of functions 
\begin{equation}\label{lf0}
\{
f^{\mathbf e}_{(j,k)}: j\geq j_0, k\in \Gamma_j, {\mathbf e}\in E_j
\},
\end{equation}
sets of indexes $\Gamma_j\subset \Z^d$, $j\geq j_0$, and a natural number $\lambda \ge 2$.
If $m$ is finite,
then
\[
\Gamma_j \subset \Lambda_j := \Z^d \cap [-2^{j-1}\lambda,2^{j-1}\lambda)^d.
\]
If $m=\infty$, then $\Gamma_j =\Z^d$.
 Define the family of functions $\{\chip_{j,k}: j \ge j_0, k\in \Gamma_j\}$ 
 as
 \begin{equation} \label{chip}
\chip_{j,k}(x)=\begin{cases}
2^{jd/2}\chi_I(2^j \lambda x-k)  & \text{if $m$ is finite, where } I=[0,1]^d,
\\ 2^{jd/2}\chi_I(2^j \lambda x-k') &  \text{if $m=\infty$, where $k' \in \Lambda_j$ and $ k-k' \in 2^j\lambda\Z^d$.}
\end{cases}
\end{equation}
 
 The family of functions \eqref{lf0} satisfies the following conditions:
 \begin{enumerate}[(i)]
   \item
 $f^{\mathbf e}_{(j,k)}\in C^m(\R^d)$ and
$\supp f^{\mathbf e}_{(j,k)}\subset Q_\ep$.
 
\item If $f\in L^2(\R^d)$ and $\supp f\subset Q$, then
\[
\|f||_2^2 =\sum_{j\geq j_0} \sum_{\mathbf e\in E_j} \sum_{k\in \Gamma_j} |\lan f,f^{\mathbf e}_{(j,k)} \ran |^2.
\]
 
\item Let $s\in \R$, $0<p<\infty$,  and $0<q\le \infty$. Suppose that
\begin{equation}\label{sm}
m> \max(s, \sigma_{p,q}-s), \qquad \sigma_{p,q}=d\max(1/p-1,1/q-1,0).
\end{equation}
If $f\in\F^s_{p,q}(\R^d)$ and $\supp f \subset Q$, then
\begin{equation}\label{ss}
f=\sum_{j\geq j_0} \sum_{\mathbf e\in E_j} \sum_{k\in \Gamma_j} \lan f, f^{\mathbf e}_{(j,k)} \ran f^{\mathbf e}_{(j,k)}
\end{equation}
with unconditional convergence in $\F^s_{p,q}$ if $q<\infty$ and in $\F^{s-\epsilon}_{p,q}$ spaces  for any $\epsilon>0$ if $q=\infty$. 

\item $\F^s_{p,q}$ norm is characterized by the magnitude of coefficients of functions \eqref{lf0}. That is, for any $f\in\F^s_{p,q}(\R^d)$ and $\supp f \subset Q$
we have
\begin{equation}\label{kwadratowa}
\| f||_{\F^s_{p,q}(\R^d)} \asymp
\bigg\| \bigg( \sum_{j\geq j_0} \sum_{\mathbf e\in E_j} \sum_{k\in \Gamma_{j}} \big(  2^{js} |\lan f,f^{\mathbf e}_{(j,k)}\ran|\chip_{j,k}\big)^q    \bigg)^{1/q} \bigg\|_p.
\end{equation}

 \end{enumerate}
\end{theorem}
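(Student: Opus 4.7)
The strategy is to construct the family \eqref{lf0} directly from a standard wavelet orthonormal basis of $L^2(\R^d)$, distinguishing $m<\infty$ from $m=\infty$. In the finite-smoothness case, take tensor products of compactly supported one-dimensional Daubechies wavelets of smoothness $C^m$, scaled so that supports fit inside $Q_\ep$; the dilation parameter $\lambda\ge 2$ controls the fineness of the wavelet lattice, and $\Gamma_j\subset\Lambda_j$ collects those indices whose wavelet support meets $Q$ (by compact support, only finitely many per scale). In the $m=\infty$ case, take tensor products of Meyer wavelets $e^{\mathbf e}_{j,k}$, fix a cutoff $\eta\in C^\infty_c(Q_\ep)$ with $\eta=1$ on $Q$, and set $f^{\mathbf e}_{(j,k)}=\eta\cdot e^{\mathbf e}_{j,k}$ with $\Gamma_j=\Z^d$; Meyer wavelets are $C^\infty$ but not compactly supported, so the cutoff is needed to enforce $\supp f^{\mathbf e}_{(j,k)}\subset Q_\ep$.

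Claim (i) is immediate by construction. For claim (ii), observe that when $f\in L^2(\R^d)$ with $\supp f\subset Q$, the cutoff satisfies $\eta=1$ on $\supp f$, giving $\lan f,f^{\mathbf e}_{(j,k)}\ran=\lan f,e^{\mathbf e}_{j,k}\ran$ in the Meyer case; in the Daubechies case, the omitted indices $k\notin\Gamma_j$ satisfy $\supp e^{\mathbf e}_{j,k}\cap Q=\emptyset$, so $\lan f,e^{\mathbf e}_{j,k}\ran=0$. Summation then collapses to the Parseval identity of the underlying wavelet ONB.

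Claims (iii) and (iv) follow from the standard wavelet characterization of Triebel-Lizorkin spaces, reduced to the present local setting by the forthcoming Lemma \ref{koniecprawie}, whose proof is postponed to Section \ref{SL}. Condition \eqref{sm} on $m$ is the standard regularity threshold ensuring that the underlying wavelets form an unconditional basis of $\F^s_{p,q}(\R^d)$, see \cite[Chapter 3]{Tr4}. In the Daubechies case, the local characterization is essentially transparent once one notes that wavelets with support disjoint from $Q$ contribute trivially. In the Meyer case, one must verify that the periodized indicator $\chip_{j,k}$, which assigns all $k$ in the class $k'+2^j\lambda\Z^d$ to the same cube inside $Q$, yields a square function equivalent to the standard wavelet square function. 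This rests on rapid decay of Meyer wavelets: for $f$ supported in $Q$, the coefficients $|\lan f,e^{\mathbf e}_{j,k}\ran|$ decay faster than any polynomial in the distance of $k$ from $2^j\lambda\cdot Q$, so that the periodized sum is dominated by its $Q$-local part. Unconditional convergence of \eqref{ss} follows from the frame property by standard arguments, with the extra $\ep$-loss for $q=\infty$ a usual technicality.

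The main obstacle lies in Lemma \ref{koniecprawie}: it encapsulates the delicate local wavelet characterization of $\F^s_{p,q}$ distributions supported in $Q$ and handles both the finite-smoothness Daubechies case and the rapid-decay Meyer case in a unified way. Apart from that lemma, every step reduces to classical wavelet theory: Parseval's identity, compact support (Daubechies) or rapid decay (Meyer) of the wavelets, and unconditional basis properties in $\F^s_{p,q}(\R^d)$.
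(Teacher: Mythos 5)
Your proposal follows the same architecture as the paper: Daubechies wavelets for finite $m$ with $\Gamma_j$ chosen so that all wavelets meeting $Q$ have support in $Q_\ep$, Meyer wavelets localized to $Q_\ep$ for $m=\infty$, and the hard work deferred to Lemma \ref{koniecprawie}. The one genuine deviation is the localization mechanism in the Meyer case: you multiply by a cutoff $\eta\in C^\infty_c(Q_\ep)$ with $\eta=1$ on $Q$, whereas the paper sets $f^{\mathbf e}_{(j,k)}=H(\psi^{\mathbf e}_{j,k})$ for a Coifman--Meyer smooth \emph{orthogonal projection} $H=H^*=H^2$ localized in $Q_\ep$ and acting as the identity on distributions supported in $Q$. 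For the theorem as stated your variant suffices, because every claim concerns only $f$ with $\supp f\subset Q$, and the only properties of the localizing operator actually used are self-adjointness and the identity $Lf=f$ for such $f$: these give $\lan f,f^{\mathbf e}_{(j,k)}\ran=\lan f,\psi^{\mathbf e}_{j,k}\ran$ exactly, whence (ii) is the Parseval identity of the Meyer ONB, and (iii) follows by applying the (bounded, by the multiplier theorem \cite[Theorem 4.2.2]{Tr4}) operator $M_\eta$ to the unconditionally convergent Meyer expansion of $f$. What the projection buys beyond this is a cleaner global object (the image of an ONB under an orthogonal projection is a Parseval frame for the range of $H$), but that extra structure is not invoked in (i)--(iv). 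Two small points to tighten: $\lambda$ does not rescale the wavelet lattice --- the wavelets remain dyadic and $\lambda$ enters only through the periodization in $\chip_{j,k}$ and $\Lambda_j$ coming from Lemma \ref{koniecprawie}; and in the Daubechies case you should record the requirement $(2N-1)2^{-j_0}<\ep/2$ (or an equivalent rescaling), which is what guarantees that any $\psi^{\mathbf e}_{j,k}$ whose support meets $Q$ already has support in $Q_\ep$, so that no nonzero coefficients are discarded.
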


\begin{definition}\label{lf}
A set of functions \eqref{lf0} which satisfies the conclusions of Theorem \ref{localframe} is said to be a local Parseval frame of smoothness $m$ and is denoted by $\mathcal W(m,j_0,\ep)$.
\end{definition}

We will give two proofs of Theorem \ref{localframe}. The first proof works only for finite smoothness $m$ using Daubechies wavelets. The second more general proof works for $m=\infty$ and uses Meyer wavelets.

\subsection{Daubechies multivariate wavelets}
We consider Daubechies multivariate wavelets following \cite{BD}. 
\begin{definition}\label{daubechies}
For a fixed $N\geq 2$, let ${}_N\phi$ be a univariate, compactly supported scaling function with support $\supp {}_N\phi=[0,2N-1]$
associated with the compactly supported, orthogonal univariate Daubechies wavelet ${}_N\psi$, see \cite[Section 6.4]{Dau}. In addition, we assume that $\supp {}_N\psi = [0,2N-1]$.
Let $\psi^0={}_N\phi$ and  $\psi^1={}_N\psi$.  For each $\mathbf e=(e_1,\ldots,e_d)\in E'$, define
\begin{equation}\label{mv}
\psi^{\mathbf e}(x)=\psi^{e_1}(x_1)\cdots \psi^{e_d}(x_d), \quad x=(x_1,\ldots,x_d)\in \R^d.
\end{equation}
For any $\mathbf e\in E'$, $j\in \Z$, and $k\in \Z^n$, we define Daubechies multivariate wavelet functions by
\begin{equation}\label{Daubechies}
\psi^{\mathbf e}_{j,k}(x)=2^{j d/2}\psi^{\mathbf e}(2^jx-k), \qquad x\in \R^d,
\end{equation}
\end{definition}

It is well-known that for any $j_0\geq 0$, a set $\{\psi^{\mathbf e}_{j,k}: j\geq j_0, e \in E_j, k\in \Z^d\}$ is an orthonormal basis of $L^2(\R^d)$. Moreover, it is also an unconditional basis of the Triebel-Lizorkin  space $\F^s_{p,q}(\R^d)$, $s\in \R$, $0< p<\infty$, $0<q<\infty$ for sufficiently large choice of $N$ depending on $s$, $p$, and $q$, see \cite[Theorem 1.20(ii)]{Tr5} and \cite[Theorem 3.5]{Trtom3} shown under more restrictive assumptions. More precisely,
$N=N(s,p,q)$ has to be such that $\psi^0={}_N\phi,  \psi^1={}_N\psi \in C^m(\R^d)$, where
\[
m> \max(s, \sigma_{p,q}-s), \qquad \sigma_{p,q}=d\max(1/p-1,1/q-1,0).
\]
Recall that the smoothness $m$ of Daubechies scaling function and wavelet ${}_N\phi$,  ${}_N\psi$ depends (roughly linearly) on $N$.

We shall illustrate the proof of Theorem \ref{localframe} when the parameter $j_0 \in \N_0$ depends on the smoothness $m$ and $\ep>0$.

\begin{definition}\label{poz}
 Let $j_0 \in \N_0$ be the smallest integer such that 
\begin{equation}\label{poziom}
(2N-1)2^{-j_0}< \ep/2.
\end{equation}
For $j\geq j_0$ define
\[
\Gamma_j=\{k\in \Z^d: \supp \psi^{\mathbf e}_{j,k} \subset Q_\ep\}.
\]
\end{definition}

\begin{proof}[Proof of Theorem \ref{localframe} for finite $m$]
Consider a Daubechies wavelet system of smoothness $m$ relative to the cube $Q$ and  $\ep>0$ defined by
\begin{equation}\label{los}
f^{\mathbf e}_{(j,k)} = \psi^{\mathbf e}_{j,k}, \qquad j\geq j_0, e \in E_j, k\in \Gamma_j.
\end{equation}
Observe that functions $\psi^{\mathbf e}$ given by \eqref{mv} satisfy $\supp \psi^{\mathbf e} = [0,2N-1]^d$. 
Hence, 
\[
\supp \psi^{\mathbf e}_{j,k} = 2^{-j}(k+[0,2N-1]^d).
\]
If this set
intersects the cube $Q=(-1,1)^d$ for some $j\ge j_0$, then by \eqref{poziom} we have $\supp \psi^{\mathbf e}_{j,k}  \subset Q_\ep$ and $k\in \Gamma_j$.

By Definition \ref{poz} the property (i) holds automatically. 
 Let $f\in L^2(\R^d)$ and $\supp f\subset Q$. If for some $j\ge j_0$, $k\in \Z^n$, and $\mathbf{e} \in E$, we have
$\lan f, \psi^{\mathbf e}_{j,k} \ran  \ne 0$, then $k\in \Gamma_j$. Since $\{\psi^{\mathbf e}_{j,k}: j\geq j_0, e \in E_j, k\in \Z^d\}$ is an orthonormal basis of $L^2(\R^d)$, we deduce (ii).

Let $s\in \R$, $0<p<\infty$,  and $0<q\le \infty$. Suppose that the smoothness $m$ satisfies \eqref{sm}. As before, if $f\in\F^s_{p,q}(\R^d)$ and $\supp f \subset Q$, then $\lan f, \psi^{\mathbf e}_{j,k} \ran  \ne 0$ implies that $k\in \Gamma_j$. By \cite[Theorem 1.20(ii)]{Tr5} 
\[
f=\sum_{j\geq j_0} \sum_{\mathbf e\in E_j} \sum_{k\in \Gamma_j} \lan f, \psi^{\mathbf e}_{j,k} \ran 
\psi^{\mathbf e}_{j,k}
\]  
with unconditional convergence in $\F^s_{p,q}$ norm if $q<\infty$; the pairing $\lan f, \psi^{\mathbf e}_{j,k} \ran $ makes sense by \cite[Remark 1.14]{Tr5}.
Since $ \lan f, \psi^{\mathbf e}_{j,k} \ran =0$ for $j\ge j_0$ and $k\not \in \Gamma_j$ we deduce (iii). If $q=\infty$ the above series converges locally in spaces $\F^{s-\epsilon}_{p,q}$ for any $\epsilon>0$. However, supports of $f$ and $f^{\mathbf e}_{(j,k)} $ are all contained in $Q_\ep$. Hence, the convergence in \eqref{ss} is in (global) $\F^{s-\epsilon}_{p,q}$ spaces for any $\epsilon>0$.
By \cite[Theorem 1.20(ii)]{Tr5}, the analysis transform
\[
\F^s_{p,q}(\R^d) \ni f \mapsto (\lan f, \psi^{\mathbf e}_{j,k} \ran )_{j\ge j_0,k\in \Z^d,\mathbf{e}
\in E_j} \in \f^s_{p,q}(\R^d)
\]
is an isomorphism, 
where $\f^s_{p,q}=\f^s_{p,q}(\R^d)$ is a discrete Triebel-Lizorkin space introduced by Frazier and Jawerth in \cite{FJ}. The $\f^s_{p,q}$ norm of a sequence $\boldsymbol s=(s_{j,k}^{\mathbf{e}})$ is given by
\begin{equation}\label{tld}
||\boldsymbol s||_{ \f^s_{p,q}}= 
\bigg\|
\bigg( \sum_{j\geq j_0} \sum_{\mathbf e\in E_j} \sum_{k\in \Z^d} \big(  2^{js} |s^{\mathbf e}_{j,k} |\chi_{j,k}\big)^q    \bigg)^{1/q} \bigg\|_p,
\end{equation}
where $\chi_{j,k}(x)=2^{jd/2}\chi_I(2^j x-k)$. Note that in \eqref{tld} we can replace functions $\chi_{j,k}$ by their scaled variants $x \mapsto 2^{jd/2}\chi_I(2^j \lambda x-k)$. 
Take any $f\in\F^s_{p,q}(\R^d)$ such that $\supp f \subset Q$.
Since $ \lan f, \psi^{\mathbf e}_{j,k} \ran =0$ for $j\ge j_0$ and $k\not \in \Gamma_j$, the norm equivalence \eqref{kwadratowa} follows.
\end{proof}

\subsection{Meyer multivariate wavelets} 

\begin{definition}\label{meyer}
Let  
   $\psi^0 \in \calS(\R^d)$  be the real-valued scaling function and let $\psi^1 \in \calS(\R^d)$ be  the associated real-valued Meyer wavelet, see \cite{HW, Me, Wo}. We define Meyer multivariate wavelets $\psi^{\mathbf e}_{j,k}$  in the same way as in \eqref{Daubechies}.
\end{definition}

It is well-known that for any $j_0\geq 0$, a set $\{\psi^{\mathbf e}_{j,k}: j\geq j_0, e \in E_j, k\in \Z^d\}$ is an orthonormal basis of $L^2(\R^d)$. Moreover, it is also an unconditional basis of the Triebel-Lizorkin  space $\F^s_{p,q}(\R^d)$ for all values of parameters $s\in \R$, $0< p<\infty$, see \cite[Theorem 3.12]{Trtom3}.
We shall now give the proof of Theorem  \ref{localframe} for $m=\infty$ using Meyer wavelets. For the sake of simplicity we shall assume that the scale parameter $j_0=0$; the general case follows by easy modifications.

\begin{proof}[Proof of Theorem \ref{localframe} for $m=\infty$]
Let $H$ be a Hestenes operator acting on functions on $\R^d$ such that:
\begin{enumerate}[(a)]
\item
$H$ is localized in $Q_\ep$; in particular, $H f(x)=0$ for all $f\in C_0(\R^d)$ and all $x\not\in Q_\ep$,
\item  $H f=f$ for all $f\in C_0(\R^d)$ such that $\supp f \subset Q$,
\item $H=H^*$ is an orthogonal projection on $L^2(\R^d)$.
\end{enumerate}
The existence of such operator in one dimension follows from the construction of Coifman and Meyer \cite{CM}, see \cite{AWW, HW}. The higher dimensional analogue is obtained by tensoring of one dimensional Hestenes operators, see \cite[Lemma 3.1]{BD}. That is, $H$ acts separately in each variable as one dimensional Hestenes operator. Since linear combinations of separable functions are dense in $L^2$ norm, we deduce that tensor product of two $H$-operators, which are orthogonal projections, is again an orthogonal projection. This shows the existence of an operator $H$ satisfying (a)--(c).

For $j\ge 0$ define $\Gamma_j=\Z^d$. Consider a Meyer wavelet system relative to the cube $Q$ and  $\ep>0$ defined by
     \[
 f^{\mathbf e}_{(j,k)}=H( \psi^{\mathbf e}_{j,k}), \qquad j\geq 0, e\in E_j, k\in \Z^d.
      \]  
 Properties of (i) and (ii) are  an immediate consequence of (a)-(c) and the fact that the multivariate Meyer wavelet system $\{\psi^{\mathbf e}_{j,k}: j\geq 0, e \in E_j, k\in \Z^d\}$ is an orthonormal basis of $L^2(\R^d)$.
 
 To show property (iii), take any $f\in\F^s_{p,q}(\R^d)$ such that $\supp f \subset Q$. By \cite[Theorem 3.12]{Trtom3} we have
\begin{equation}\label{ss3}
f=\sum_{j\geq 0} \sum_{\mathbf e\in E_j} \sum_{k\in \Gamma_j} \lan f, \psi^{\mathbf e}_{j,k} \ran \psi^{\mathbf e}_{j,k}
\end{equation}
with unconditional convergence in $\F^s_{p,q}$ if $q<\infty$ and locally in any $\F^{s-\epsilon}_{p,q}$ spaces  for $\epsilon>0$ if $q=\infty$. By property (b) we deduce that for $f\in \mathcal D'(\R^d)$, such that $\supp f \subset Q$, we have $Hf=f$. Applying the operator $H$ to both sides of \eqref{ss3} and using Theorem \ref{composition} yields the conclusion (iii). Since $\supp f^{\mathbf e}_{(j,k)}\subset Q_\ep$, the series \eqref{ss} converges (globally) in any $\F^{s-\epsilon}_{p,q}$ spaces  for $\epsilon>0$ if $q=\infty$.
 
The proof of (iv) is a consequence of Lemma \ref{koniecprawie}, whose proof is postponed till Section \ref{SL}.

\begin{lemma}\label{koniecprawie}
Let   $\{\psi^{\mathbf e}_{jk}: j\geq 0, k\in \Z^d, \mathbf e\in E_j\}$ be a multivariate Meyer wavelet  orthonormal basis of $L^2(\R^d)$.
Let $0<p<\infty$, $0<q\leq \infty$ and $s\in \R$. There exists a natural number $\lambda\geq 10$ such that for any $f\in \F^s_{p,q}(\R^d)$ with $\supp f\subset [-1,1]^d$ we have
\begin{equation}\label{periodic}
\| f||_{ \F^s_{p,q}(\R^d)}^p \asymp
\int_{\R^d} \bigg( \sum_{j\geq 0} \sum_{\mathbf e\in E_j} \sum_{ k\in \Lambda_{j}} \sum_{l\in \Z^d} \big( 2^{js}  \chi_{j,k}(x) |\lan f,\psi^{\mathbf e}_{j,k+2^jl\lambda }\ran| \big)^q \bigg)^{p/q} dx,
\end{equation}
where $\Lambda_j=\{k\in \Z^d: k/2^j\in [-\lambda/2,\lambda/2)^d\}$.
\end{lemma}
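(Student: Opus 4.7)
The plan is to reduce everything to the standard Meyer wavelet characterization of $\F^s_{p,q}(\R^d)$ from \cite[Theorem 3.12]{Trtom3},
\[
\|f\|_{\F^s_{p,q}(\R^d)}^p \asymp B(f):=\int_{\R^d}\bigg(\sum_{j\ge 0}\sum_{\mathbf e\in E_j}\sum_{k\in\Z^d}\big(2^{js}\chi_{j,k}(x)|\lan f,\psi^{\mathbf e}_{j,k}\ran|\big)^q\bigg)^{p/q}dx,
\]
and to show that for $\lambda\ge 10$ sufficiently large the right hand side of \eqref{periodic}, call it $A_\lambda(f)$, satisfies $A_\lambda(f)\asymp B(f)$. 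A convenient intermediate is
\[
C(f):=\int_{[-\lambda/2,\lambda/2)^d}\bigg(\sum_{j\ge 0}\sum_{\mathbf e\in E_j}\sum_{k\in\Lambda_j}\big(2^{js}\chi_{j,k}(x)|\lan f,\psi^{\mathbf e}_{j,k}\ran|\big)^q\bigg)^{p/q}dx,
\]
which simultaneously represents the $l=0$ portion of $A_\lambda(f)$ and the portion of $B(f)$ whose spatial cubes $I_{j,k}$ lie in $[-\lambda/2,\lambda/2)^d$, i.e.\ with $k\in\Lambda_j$. The target is then $A_\lambda(f)\asymp C(f)\asymp B(f)$.

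The workhorse is a rapid decay estimate for the wavelet coefficients of the compactly supported distribution $f$. Inserting a smooth cutoff $\eta$ equal to $1$ on $[-1,1]^d$ and supported in $[-2,2]^d$ gives $\lan f,\psi^{\mathbf e}_{j,k}\ran=\lan f,\eta\psi^{\mathbf e}_{j,k}\ran$, and the Schwartz decay of $\psi^{\mathbf e}$ on $[-2,2]^d$ then yields, for every $N$,
\[
|\lan f,\psi^{\mathbf e}_{j,k}\ran|\le C_N\,\|f\|_{\F^s_{p,q}(\R^d)}\,2^{jd/2}(1+2^j\dist(k/2^j,[-1,1]^d))^{-N}.
\]
In the Banach range $1\le p,q<\infty$ this is classical duality with $\F^{-s}_{p',q'}$; in the quasi-Banach range one first embeds $\F^s_{p,q}\hookrightarrow C^{-M}_{\mathrm{loc}}$ for some $M=M(s,p,q,d)$ and bounds the pairing by a finite number of derivatives of $\eta\psi^{\mathbf e}_{j,k}$ on $[-2,2]^d$, absorbing a harmless factor $2^{jM}$ into the decay by choosing $N$ large.

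Granting this estimate, both comparisons are tail arguments. The difference $B(f)-C(f)$ is supported on the complement of $[-\lambda/2,\lambda/2)^d$ and only involves $k\notin\Lambda_j$, where $\dist(k/2^j,[-1,1]^d)\gtrsim\lambda$; choosing $N$ large relative to $s,p,q,d$ and summing the resulting power-type series in $k$ and $j$ bounds this difference by $\delta(\lambda)\|f\|_{\F^s_{p,q}}^p$ with $\delta(\lambda)\to 0$. For $A_\lambda(f)$ versus $C(f)$, the extra contribution consists of the $l\ne 0$ terms; for $k\in\Lambda_j$ and $l\ne 0$, the shifted center $(k+2^jl\lambda)/2^j$ lies at distance $\ge(|l|-1/2)\lambda-1\gtrsim|l|\lambda$ from $[-1,1]^d$ once $\lambda\ge 10$, so the same decay together with a summation in $l$ produces a bound of the form $\delta(\lambda)\|f\|_{\F^s_{p,q}}^p$. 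Fixing $\lambda\ge 10$ large enough that $\delta(\lambda)$ is smaller than half the equivalence constant coming from \cite[Theorem 3.12]{Trtom3} closes the loop and gives \eqref{periodic}.

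The main obstacle is the uniform decay estimate on the wavelet coefficients of a compactly supported distribution in $\F^s_{p,q}$ across the full parameter range, since for $p<1$, $q<1$, or $s<0$ duality is not available and the order of $f$ as a distribution is not a priori bounded. The cleanest remedy is to invoke the Frazier--Jawerth smooth molecule/atomic calculus, or equivalently to lift $f$ into a negative H\"older/Bessel-potential space via an embedding, so that $\lan f,\eta\psi^{\mathbf e}_{j,k}\ran$ is dominated by a controllable $C^M$-seminorm of $\eta\psi^{\mathbf e}_{j,k}$ on $[-2,2]^d$. Once this input is secured the remainder is routine bookkeeping, and $\lambda$ is chosen purely to force the tail quantities $\delta(\lambda)$ to be as small as required.
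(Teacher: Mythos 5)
Your proposal is correct and follows essentially the same route as the paper: reduce to the global Meyer wavelet characterization of $\F^s_{p,q}(\R^d)$, prove rapid decay of the coefficients $\lan f,\psi^{\mathbf e}_{j,k}\ran$ for $k/2^j$ away from the support by embedding $\F^s_{p,q}$ into a Banach-range space where a duality pairing can be estimated (the paper uses $\F^s_{p,q}\hookrightarrow\F^{s_1}_{p_1,2}$ with $p_1>\max(1,p)$ and the dual $\F^{-s_1}_{p_1',2}$, which is the concrete realization of the lift you sketch), and then handle both the discarded far-away coefficients and the periodization overlaps by tail summation. The only differences are organizational — the paper runs the truncation argument on the function side via an operator $Z_\lambda$ with $\|Z_\lambda f\|\lesssim\lambda^{-\mu}\|f\|$ rather than comparing the discrete quantities $B(f)$ and $C(f)$ directly — and these do not change the substance of the argument.
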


Take any $f\in\F^s_{p,q}(\R^d)$ with $\supp f \subset Q$.
For fixed $j\ge 0$ and $\mathbf e\in E_j$ we have
\[
\begin{aligned}
\sum_{ k\in \Lambda_{j}} \sum_{l\in \Z^d} \big( 2^{js}  \chi_{j,k}(x) |\lan f,\psi^{\mathbf e}_{j,k+2^jl\lambda}\ran| \big)^q
& = \sum_{k\in \Z^d} 
\big( 2^{js}  \chi_{j,n(k)}(x) |\lan f,\psi^{\mathbf e}_{j,k}\ran| \big)^q
\\
& = 
 \sum_{k\in \Z^d} 
\big( 2^{js}  \rho_{j,k}(x/\lambda) |\lan f,\psi^{\mathbf e}_{j,k}\ran| \big)^q,
\end{aligned}
\]
where $n(k)\in \Lambda_j$ is such that $n(k)-k \in 2^j \lambda \Z^d$. Since $f=Hf$ and $H=H^*$, we have $\lan f, f^{\mathbf e}_{(j,k)} \ran = \lan f, \psi^{\mathbf e}_{j,k}\ran$. 
Hence, Lemma \ref{koniecprawie} yields \eqref{kwadratowa} by the change of variables.
\end{proof}

\begin{remark}\label{synt}
Suppose that 
\[
\mathcal W(m,j_0,\ep)= \{ f^{\mathbf e}_{(j,k)}: j\geq j_0, k\in \Gamma_j, {\mathbf e}\in E_j \}
\]
is a local Parseval frame of smoothness $m$. Theorem \ref{localframe}(iv) shows the boundedness of the analysis transform defined on Triebel-Lizorkin space $\F^s_{p,q}(\R^d)$ for distributions $f$ satisfying $\supp f \subset Q$. To define synthesis operator we need to define a local version of  Triebel-Lizorkin sequence space $\f^{s,\mu}_{p,q}(\R^d)$ with an extra decay parameter $\mu>0$. Define $\f^{s,\mu}_{p,q}(\R^d)$ as the space of all sequences $\boldsymbol s=(s^{\mathbf e}_{(j,k)})$ with the quasi-norm
\begin{equation}\label{tlmu}
\begin{aligned}
||\boldsymbol s||_{\f^{s,\mu}_{p,q}} = 
\bigg\| \bigg( \sum_{j\geq j_0} \sum_{\mathbf e\in E_j} \sum_{k\in \Gamma_{j}} & \big(  2^{js} |s^{\mathbf e}_{(j,k)} |\chip_{j,k}\big)^q    \bigg)^{1/q}  \bigg\|_p\\
&+ \sup_{j\ge j_0,\  \mathbf e\in E_j, \ k\in \Gamma_{j} \setminus \Lambda_j} 2^{j\mu}(|2^{-j}k|_\infty+1)^\mu |s^{\mathbf e}_{(j,k)} |.
\end{aligned}
\end{equation}
If the smoothness $m$ is finite, then the second term is not present since $\Gamma_j \subset \Lambda_j$. Hence, the second term appears only when $m=\infty$ in which case $\Gamma_j=\Z^d$ and $\Lambda_j=\Z^d \cap [-2^{j-1}\lambda,2^{j-1}\lambda)^d$.
Then, for any $\mu>0$ the analysis operator (with respect to $\mathcal W(m,j_0,\ep)$) maps boundedly distributions $f\in \F^s_{p,q}(\R^d)$ with $\supp f \subset Q$ into $\f^{s,\mu}_{p,q}$ in light of Proposition \ref{p62}. Then for sufficiently large $\mu>0$, the synthesis operator
\begin{equation}\label{syn}
\boldsymbol s=(s^{\mathbf e}_{(j,k)}) \mapsto \sum_{j\geq j_0} \sum_{\mathbf e\in E_j} \sum_{k\in \Gamma_{j}}  
s^{\mathbf e}_{(j,k)} f^{\mathbf e}_{(j,k)}
\end{equation}
maps boundedly $\f^{s,\mu}_{p,q}$ into $\F^s_{p,q}(\R^d)$. To deduce this boundedness one needs to split the sum in \eqref{syn} over $k\in \Lambda_j$ and $k\in \Z^d \setminus \Lambda_j$. The former sum converges by the boundedness of synthesis operator from $\f^s_{p,q}(\R^d)$ to $\F^s_{p,q}(\R^d)$, see \cite[Theorem 3.12]{Trtom3}. The latter sum converges by the same argument as in the proof of Proposition \ref{klucz} for $\mu>\max(d/p,s+d/2)$.
\end{remark}

We have the following extension of Theorem \ref{localframe} to Besov spaces.

\begin{theorem}\label{localframeB}
Under the hypothesis of Theorem \ref{localframe}, in addition to (i)--(iv) the following conclusions hold:
\begin{enumerate}
\item[(v)]  Let $s\in \R$, $0<p< \infty$,  and $0<q\le \infty$.  Suppose that
\begin{equation}\label{smB}
m> \max(s, \sigma_{p}-s) , \qquad \sigma_{p}=d\max(1/p-1,0).
\end{equation}
If $f\in\B^s_{p,q}(\R^d)$ and $\supp f \subset Q$, then
\begin{equation}\label{ssB}
f=\sum_{j\geq j_0} \sum_{\mathbf e\in E_j} \sum_{k\in \Gamma_j} \lan f, f^{\mathbf e}_{(j,k)} \ran f^{\mathbf e}_{(j,k)}
\end{equation}
with unconditional convergence in $\B^s_{p,q}$ if $q<\infty$ and in $\B^{s-\epsilon}_{p,q}$ spaces  for any $\epsilon>0$ if $q=\infty$. 

\item[(vi)]  $\B^s_{p,q}$ norm is characterized by the magnitude of coefficients of functions \eqref{lf0}. That is, for any $f\in\B^s_{p,q}(\R^d)$ and $\supp f \subset Q$
we have
\begin{equation}\label{kwadratowaB}
\| f||_{\B^s_{p,q}(\R^d)} \asymp
\bigg( \sum_{j\geq j_0} 2^{j(s+d/2-d/p)q}  \sum_{\mathbf e\in E_j} \bigg(  \sum_{k\in \Gamma_{j}}  |\lan f,f^{\mathbf e}_{(j,k)}\ran|^p \bigg)^{q/p}    \bigg)^{1/q} .
\end{equation}
\end{enumerate}
\end{theorem}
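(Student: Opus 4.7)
The plan is to mirror the proof of Theorem \ref{localframe} line by line, substituting the Besov wavelet unconditional basis property of Daubechies (resp. Meyer) wavelets for the Triebel-Lizorkin one used there, and invoking Corollary \ref{hestenB} in place of Theorem \ref{hesten}. The local Parseval frame $\{f^{\mathbf e}_{(j,k)}\}$, the index sets $\Gamma_j$, and the dilation parameter $\lambda$ produced in Theorem \ref{localframe} are reused unchanged; only items (v) and (vi) require justification.

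For finite $m$, I would take $f^{\mathbf e}_{(j,k)} = \psi^{\mathbf e}_{j,k}$ to be the Daubechies system as in \eqref{los}. The hypothesis \eqref{smB} with $\sigma_p = d\max(1/p-1,0)$ is precisely the sharp smoothness condition ensuring that multivariate Daubechies wavelets form an unconditional basis of $\B^s_{p,q}(\R^d)$, cf.\ \cite[Theorem 1.20(i)]{Tr5}; for $f \in \B^s_{p,q}$ with $\supp f \subset Q$, the support considerations from Definition \ref{poz} annihilate every coefficient outside $\Gamma_j$, so the truncated sum \eqref{ssB} coincides with the full wavelet expansion and converges unconditionally in $\B^s_{p,q}$ if $q<\infty$, and in $\B^{s-\epsilon}_{p,q}$ for every $\epsilon>0$ if $q=\infty$. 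The norm equivalence \eqref{kwadratowaB} is then the textbook isomorphism between $\B^s_{p,q}$ and its wavelet sequence space, restricted to the subset of coefficients indexed by $\Gamma_j$.

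For $m=\infty$, I would repeat the Meyer construction, setting $f^{\mathbf e}_{(j,k)} = H(\psi^{\mathbf e}_{j,k})$, where $H$ is the tensorized Coifman--Meyer projection localized on $Q_\ep$ and equal to the identity on distributions supported in $Q$. Since the Meyer system is an unconditional basis of $\B^s_{p,q}(\R^d)$ for all admissible $(s,p,q)$, we have the expansion \eqref{ss3} of $f$ in $\B^s_{p,q}$; applying $H$, which is bounded on $\B^s_{p,q}$ by Corollary \ref{hestenB}, leaves the left-hand side unchanged and produces \eqref{ssB} on the right. The fact that $\supp f^{\mathbf e}_{(j,k)} \subset Q_\ep$ upgrades local convergence in $\B^{s-\epsilon}_{p,q}$ to global convergence when $q = \infty$, exactly as in part (iii) of Theorem \ref{localframe}.

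The main obstacle, as in the Triebel-Lizorkin case, lies in proving \eqref{kwadratowaB} for $m=\infty$. What is needed is a Besov analogue of Lemma \ref{koniecprawie}: for $f \in \B^s_{p,q}(\R^d)$ with $\supp f \subset [-1,1]^d$,
\[
\| f \|_{\B^s_{p,q}(\R^d)}^q \asymp \sum_{j\geq 0} 2^{j(s+d/2-d/p)q} \sum_{\mathbf e\in E_j} \bigg( \sum_{k\in \Lambda_{j}} \bigg( \sum_{l\in \Z^d} |\lan f,\psi^{\mathbf e}_{j,k+2^j l\lambda}\ran| \bigg)^p \bigg)^{q/p},
\]
with the usual $\sup$-modification when $q=\infty$. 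Once this is established, the periodization identity $\lan f, f^{\mathbf e}_{(j,k)}\ran = \lan f, \psi^{\mathbf e}_{j,k}\ran$ used at the end of the proof of Theorem \ref{localframe} delivers \eqref{kwadratowaB} immediately. I would prove this lemma in two possible ways: either by paralleling Section \ref{SL} verbatim with the Besov sequence quasi-norm replacing the $\f^s_{p,q}$ quasi-norm, using the off-diagonal decay of Meyer-wavelet coefficients of compactly supported distributions in $\B^s_{p,q}$; or, more economically, by real interpolation, transferring \eqref{kwadratowa} at two Triebel-Lizorkin endpoints $s_0<s<s_1$ through \eqref{ri} together with the retract property of the wavelet transform to the intermediate Besov parameter.
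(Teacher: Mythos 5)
Your treatment of (v), and of (vi) in the finite-smoothness case, matches the paper's: invoke the Besov unconditional-basis property of Daubechies wavelets under \eqref{smB} (resp.\ of Meyer wavelets), observe that support considerations kill all coefficients outside $\Gamma_j$, apply $H$ (bounded on $\B^s_{p,q}(\R^d)$ by interpolation, i.e.\ Corollary \ref{hestenB}) to the Meyer expansion when $m=\infty$, and read off \eqref{kwadratowaB} from the isomorphism onto the discrete Besov space $\bb^s_{p,q}$ with quasi-norm \eqref{tldB}. That part is fine.

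Where you diverge from the paper — and overcomplicate matters — is in (vi) for $m=\infty$. You assert that the main obstacle is a Besov analogue of the periodization Lemma \ref{koniecprawie}. No such lemma is needed. The reason Lemma \ref{koniecprawie} is required in the Triebel--Lizorkin case is that the quasi-norm \eqref{kwadratowa} carries the spatially localized factors $\chip_{j,k}$, which for $\Gamma_j=\Z^d$ are defined by periodization ($k\mapsto k'\in\Lambda_j$), so the excess Meyer coefficients must be absorbed into a periodized square function. The Besov quasi-norm \eqref{kwadratowaB} has no such localization: the inner sum $\sum_{k\in\Gamma_j}|\lan f,f^{\mathbf e}_{(j,k)}\ran|^p$ with $\Gamma_j=\Z^d$ and $\lan f,f^{\mathbf e}_{(j,k)}\ran=\lan f,\psi^{\mathbf e}_{j,k}\ran$ is \emph{literally} the $k$-sum appearing in \eqref{tldB}, so \eqref{kwadratowaB} is a one-line consequence of the isomorphism of the analysis transform from \cite[Theorem 3.12(i)]{Trtom3}. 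This is exactly what the paper does. Moreover, your proposed substitute lemma, with the inner $\ell^1$-sum $\bigl(\sum_{l\in\Z^d}|\lan f,\psi^{\mathbf e}_{j,k+2^jl\lambda}\ran|\bigr)^p$, does not "immediately" deliver \eqref{kwadratowaB}: for $p\ne 1$ that expression differs from $\sum_{l}|\lan f,\psi^{\mathbf e}_{j,k+2^jl\lambda}\ran|^p$, and reconciling the two requires another pass through the off-diagonal decay estimates of Proposition \ref{p62}. Your route could be completed, but it manufactures an obstacle that the structure of the Besov sequence norm has already removed; the interpolation alternative you mention is likewise unnecessary.
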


\begin{proof}
If the smoothness parameter $m$ is finite, then let $\{\psi^{\mathbf e}_{j,k}\}$ be a multivariate Daubechies wavelet. By \cite[Theorem 1.20(i)]{Tr5} the analysis transform
\[
\B^s_{p,q}(\R^d) \ni f \mapsto (\lan f, \psi^{\mathbf e}_{j,k} \ran )_{j\ge j_0,k\in \Z^d,\mathbf{e}
\in E_j} \in \bb^s_{p,q}(\R^d)
\]
is an isomorphism, 
where $\bb^s_{p,q}=\bb^s_{p,q}(\R^d)$ is a discrete Besov space. The $\bb^s_{p,q}$ norm of a sequence $\boldsymbol s=(s_{j,k}^{\mathbf{e}})$ is given by
\begin{equation}\label{tldB}
||\boldsymbol s||_{ \bb^s_{p,q}}= 
\bigg( \sum_{j\geq j_0}  2^{j(s+d/2-d/p)q} \sum_{\mathbf e\in E_j} \bigg( \sum_{k\in \Z^d}   |s^{\mathbf e}_{j,k} |^p
\bigg)^{q/p}    \bigg)^{1/q}.
\end{equation}
Take any $f\in\B^s_{p,q}(\R^d)$ such that $\supp f \subset Q$.
Since $ \lan f, \psi^{\mathbf e}_{j,k} \ran =0$ for $j\ge j_0$ and $k\not \in \Gamma_j$, 
the formula \eqref{ssB} and
the norm equivalence \eqref{kwadratowaB} follow by the same argument as for Triebel-Lizorkin spaces.

If the smoothness parameter $m=\infty$, then we use multivariate Meyer wavelet instead. By \cite[Theorem 3.12(i)]{Trtom3}, the analysis transform
\[
\B^s_{p,q}(\R^d) \ni f \mapsto (\lan f, \psi^{\mathbf e}_{j,k} \ran )_{j\ge 0,k\in \Z^d,\mathbf{e}
\in E_j} \in \bb^s_{p,q}(\R^d)
\]
is an isomorphism and an analogue of formula \eqref{ss3} for Besov spaces holds. Then  \eqref{ssB} follows by the same argument as for Triebel-Lizorkin spaces. Finally, we deduce \eqref{kwadratowaB} using the isomorphism of analysis transform and the fact that
$\lan f, f^{\mathbf e}_{(j,k)} \ran = \lan f, \psi^{\mathbf e}_{j,k}\ran$.
\end{proof}

\section{Unconditional frames in $L^p(M)$}\label{S5}

In this section we combine Theorem \ref{localframe} on local Parseval frame and our earlier results \cite{BDK} on smooth decomposition of identity in $L^p(M)$ to construct unconditional frames in $L^p(M)$. It is worth emphasizing that our construction does not use any assumption on Riemannian manifold (such as completeness or bounded geometry). In particular, we show the existence of smooth Parseval wavelet frames in $L^2(M)$ on arbitrary Riemannian manifold $M$. This construction is made possible thanks to the following fundamental result \cite[Theorem 6.2]{BDK}.

\begin{theorem}\label{Main}
Let $M$ be a smooth connected Riemannian manifold (without boundary) and let $1\le  p < \infty$.
Suppose $\mathcal U$ is an open and precompact cover of $M$. Then, there exists $\{P_U\}_{U\in\mathcal U}$ a smooth decomposition of identity in $L^p(M)$, subordinate to $\mathcal U$. That is, the following conditions hold:
\begin{enumerate}[(i)]
\item family $\{P_U\}_{U\in \mathcal U}$ is locally finite, i.e., for any compact $K\subset M$, all but finitely many operators $P_{U}$ such that $U \cap K \ne \emptyset$, are zero, 
\item each $P_U \in \mathcal H(M)$ is localized on an open set $U \in\mathcal U$,
\item each $P_U: L^p(M) \to L^p(M)$ is a projection,
\item $P_U \circ P_{U'}=0$ for any $U \neq U' \in \mathcal U$,
\item $\sum_{U\in\mathcal U}P_U= \mathbf I$, where $\mathbf I$ is the identity in $L^p(M)$ and the convergence is unconditional in strong operator topology,
\item 
there exists a constant $C>0$ such that
\begin{equation}\label{unc0}
\frac 1{C} ||f||_{p}  \le \bigg( \sum_{U \in\mathcal U} ||P_U f||_p^p \bigg)^{1/p} \le C ||f||_p \qquad\text{for all }f\in L^p(M).
\end{equation}
\end{enumerate}
In the case $p=2$, the decomposition constant $C=1$ and each $P_U$, $U\in\mathcal U$, is an orthogonal projection on $L^2(M)$.
%
\end{theorem}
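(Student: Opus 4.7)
The plan is to build the decomposition first on $\R^d$ via Coifman--Meyer smooth orthogonal projections, and then transport the construction to $M$ through local charts using the Hestenes-operator framework of Section \ref{S2}. The $L^p$ statement is then inherited from the $L^p$-boundedness of $H$-operators in Theorem \ref{crm}.

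First, I would use $\sigma$-compactness of $M$ together with precompactness of each $U\in\mathcal U$ to pass to a countable locally finite refinement $\{U_j\}$ of $\mathcal U$ in which each $U_j$ is contained in a coordinate chart $\kappa_j\colon \widetilde U_j \to \R^d$, with $\kappa_j(U_j)$ a small open cube. The Euclidean model is provided by the one-dimensional Coifman--Meyer bell $b$ (as used in the $m=\infty$ proof of Theorem \ref{localframe}); its tensorization (following \cite[Lemma 3.1]{BD}) produces a family $\{\Pi_n\}_{n\in\Z^d}$ of $H$-operators on $\R^d$ such that each $\Pi_n$ is localized on a slightly enlarged unit cube centered at $n$, each $\Pi_n$ is an orthogonal projection on $L^2(\R^d)$, $\Pi_n\Pi_{n'}=0$ for $n\ne n'$, and $\sum_n\Pi_n=\mathbf I$ in strong operator topology. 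For each $U_j$ I would pull a suitably rescaled $\Pi_{n(j)}$ back by $\kappa_j$; since the chart pullback combined with the $\sqrt{|g_{\kappa_j}|}$ factor is an isometry between $L^2(U_j,\nu)$ and an $L^2$-subspace on the Euclidean side, the resulting $P_{U_j}$ is a Hestenes operator, an orthogonal projection on $L^2(M)$, and localized on $U_j$. Finally, I would aggregate $P_U=\sum_{U_j\subset U}P_{U_j}$ (a finite sum by local finiteness together with precompactness of $U$) to obtain a family indexed by $\mathcal U$; sums of finitely many mutually orthogonal, commuting projections remain orthogonal projections of the same Hestenes type, and items (i)--(v) for $p=2$ reduce to the Euclidean facts and the finite-overlap structure of $\{U_j\}$, with the constant $C=1$ in (vi) coming from Pythagoras applied to $f=\sum_U P_U f$.

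For general $1\le p<\infty$, the $L^p$-boundedness of each $P_U$ is given by Theorem \ref{crm}\eqref{crm2}, and the strong convergence in (v) then follows from local finiteness of $\{U_j\}$ combined with density of $\calD(M)\subset L^p(M)$. The two-sided estimate (vi) is the delicate step: the upper bound $\sum_U\|P_Uf\|_p^p\lesssim\|f\|_p^p$ uses uniform $L^p$-boundedness of the $P_U$'s together with bounded multiplicity of the refined cover, while the lower bound uses the reproducing identity $f=\sum_UP_Uf$ combined with H\"older's inequality and the same bounded-multiplicity property. The \emph{main obstacle} is ensuring that the local Coifman--Meyer projections glue into genuinely (not merely approximately) mutually orthogonal projections across overlapping charts; this is resolved by the tensor-product bell structure of $\{\Pi_n\}$, which makes neighboring projections exactly orthogonal by construction, and by invoking Lemma \ref{hest.adj} to verify that the chart pullbacks and the $\sqrt{|g|}$ multiplications preserve self-adjointness and keep every operator inside the Hestenes class $\mathcal H(M)$.
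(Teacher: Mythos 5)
The paper does not actually prove Theorem \ref{Main}; it is quoted verbatim from \cite[Theorem 6.2]{BDK}, so there is no internal proof to compare against. Judged on its own, your proposal has a genuine gap at exactly the point you yourself flag as the ``main obstacle,'' and the resolution you offer does not work. The tensor-product Coifman--Meyer projections $\{\Pi_n\}_{n\in\Z^d}$ are mutually orthogonal and sum to the identity because of the global lattice structure of $\R^d$: adjacent projections share a common face across which the two bells satisfy the compatibility identity $b_n^2+b_{n'}^2=1$ together with \emph{matched reflections about that common face}. When you transport $\Pi_{n(j)}$ to $M$ through the chart $\kappa_j$ and $\Pi_{n(j')}$ through a \emph{different} chart $\kappa_{j'}$, the transition map $\kappa_j\circ\kappa_{j'}^{-1}$ is an arbitrary diffeomorphism, not a lattice translation; the two pulled-back operators act by multiplications and reflections that are no longer compatible on $U_j\cap U_{j'}$. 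Consequently there is no reason that $P_{U_j}\circ P_{U_{j'}}=0$, and no reason that $\sum_j P_{U_j}=\mathbf I$ (on $\R^d$ the latter identity rests on the cubes tiling space with exactly matched bells on every shared face, a structure that does not exist for an arbitrary overlapping cover of $M$). Each individual $P_{U_j}$ is indeed an orthogonal projection in $\mathcal H(M)$, since you conjugate $\Pi_{n(j)}$ by a unitary chart map of $T^2_x$ type, but properties (iii)--(v) fail as a family.

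What is actually needed, and what \cite{BDK} does, is an induction over a countable locally finite refinement: assuming $E_{n-1}=\sum_{i<n}P_{U_i}$ has been built as a Hestenes orthogonal projection associated to $U_1\cup\dots\cup U_{n-1}$, one constructs $P_{U_n}$ by a one-dimensional Coifman--Meyer folding across a hypersurface separating $U_n$ from the already covered region, implemented by Hestenes operators whose diffeomorphisms are local reflections in a tubular neighborhood of that hypersurface. This inductive gluing is what produces exact (not approximate) mutual orthogonality and the telescoping identity $\sum_i P_{U_i}=\mathbf I$; it cannot be replaced by independent pullbacks of a fixed Euclidean lattice of projections through unrelated charts. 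The remaining ingredients of your outline (Theorem \ref{crm} for uniform $L^p$ bounds, bounded multiplicity for \eqref{unc0}, Lemma \ref{hest.adj} for self-adjointness) are the right ones but sit downstream of the missing construction. A smaller, fixable slip: $P_U=\sum_{U_j\subset U}P_{U_j}$ double counts those $U_j$ contained in several members of $\mathcal U$; one must fix a choice function $j\mapsto U(j)$ with $U_j\subset U(j)$ and sum over $\{j:\,U(j)=U\}$.
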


Recall that $M$ is $d$-dimensional Riemannian manifold.
For every $x\in M$ there exists $r=r(x)>0$ such that the exponential geodesic map $\exp_x$ is well defined  diffeomorphism of a ball $B(0, r) \subset T_xM$ of radius $r > 0$ with center $0$ and some precompact neighborhood $\Omega_x(r)$ of $x$ in $M$. 
For $x\in M$ we consider a local geodesic chart $(\Omega_x(r),\kappa)$, where  $r=r(x)$, $\kappa=\kappa_x=  i_x \circ \exp_x^{-1}$, and $i_x: T_xM \to \R^d$ is an isometric isomorphism. 
Define $T^p_x:  L^p(B(0, 3\sqrt{d})) \to L^p(\Omega_x(r))$ given by
\begin{equation}\label{dtp}
 T^p_x f(u)=
\bigg( \frac{3\sqrt{d}}{r} \bigg)^{d/p}
 \frac{f( \frac{3\sqrt{d}}{r} \kappa(u) ) }{|\det g_\kappa(u) |^{1/(2p)}}  
 \qquad \text{for } u \in \Omega_x(r),
\end{equation}
where $ \det g_\kappa$ denotes the determinant of the matrix whose elements are components of $g$ in coordinates of a chart $\kappa$. 

\begin{lemma}\label{iso}
 Let $1<p<\infty$. For each $x\in M$, the operator $T^p_x:  L^p(B(0, 3\sqrt{d})) \to L^p(\Omega_x(r))$ is an isometric isomorphism.
Moreover, we have the identity $(T^p_x)^{-1}=(T^{p'}_x)^*$, where $1/p+1/p'=1$. 
\end{lemma}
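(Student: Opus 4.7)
The plan is a direct computation based on the Riemannian change of variables in the chart $\kappa$. To show that $T^p_x$ is an isometry, I would expand $\|T^p_x f\|_{L^p(\Omega_x(r))}^p$ as an integral against the Riemannian measure $d\nu$, then pull it back to $B(0,r)$ using the local-coordinate expression $d\nu = \sqrt{\det g_\kappa}\,dv$. The power $1/(2p)$ in the denominator of \eqref{dtp} is calibrated precisely so that, after raising to the $p$-th power, one picks up $|\det g_\kappa|^{-1/2}$, which cancels the Jacobian $\sqrt{\det g_\kappa}$. A further affine rescaling $w = (3\sqrt{d}/r)v$ absorbs the prefactor $(3\sqrt{d}/r)^{d/p}$ against the Lebesgue Jacobian $(r/(3\sqrt{d}))^{d}$, yielding $\|T^p_x f\|_p^p = \|f\|_p^p$.

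Bijectivity is then automatic: $T^p_x$ decomposes as a dilation, the diffeomorphism $\kappa^{-1}$, and multiplication by the positive smooth function $|\det g_\kappa|^{-1/(2p)}$, each invertible on the relevant precompact domain. Solving $T^p_x f = h$ for $f$ yields the closed-form inverse
\[
(T^p_x)^{-1} h(w) = \left(\frac{r}{3\sqrt{d}}\right)^{d/p} |\det g_\kappa(\kappa^{-1}(\tfrac{r}{3\sqrt{d}}w))|^{1/(2p)}\, h(\kappa^{-1}(\tfrac{r}{3\sqrt{d}}w)),
\]
for $w\in B(0,3\sqrt{d})$, which I will use for comparison in the next step.

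For the adjoint identity, I would pair $T^{p'}_x f$ with $g\in L^p(\Omega_x(r))$ under the standard duality, convert the pairing to a Lebesgue integral on $B(0,r)$ via $\kappa$, and collect the powers of $|\det g_\kappa|$. The key algebraic identity is $\tfrac{1}{2} - \tfrac{1}{2p'} = \tfrac{1}{2p}$, which follows from $1/p + 1/p' = 1$ and produces the surviving factor $|\det g_\kappa|^{1/(2p)}$; similarly $d/p' - d = -d/p$ rearranges the dilation prefactor into $(r/(3\sqrt{d}))^{d/p}$. Performing the rescaling $w = (3\sqrt{d}/r)v$ yields an explicit formula for $(T^{p'}_x)^* g$ that coincides termwise with $(T^p_x)^{-1} g$ from the display above, proving $(T^{p'}_x)^* = (T^p_x)^{-1}$.

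I do not anticipate any serious obstacle. The only care required is careful bookkeeping of the exponents of $|\det g_\kappa|$ and of the dilation factor $3\sqrt{d}/r$, together with the observation that $\kappa = i_x \circ \exp_x^{-1}$ is a diffeomorphism on the precompact neighborhood $\Omega_x(r)$, so the change-of-variables formula applies without measurability issues.
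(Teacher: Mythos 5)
Your proposal is correct and follows essentially the same route as the paper: the isometry comes from the change of variables $d\nu=\sqrt{\det g_\kappa}\,dv$ together with the exponent calibration, and the adjoint identity from the relation $\tfrac12-\tfrac1{2p'}=\tfrac1{2p}$. The paper phrases the last step slightly more abstractly, via the invariance of the pairing $\lan T^p_xf,T^{p'}_xh\ran=\lan f,h\ran$ rather than by matching explicit formulas for $(T^{p'}_x)^*$ and $(T^p_x)^{-1}$, but this is the same computation.
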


\begin{proof}
Take any $f\in  L^p(B(0, 3\sqrt{d}))$.  Then, by the definition of Riemannian measure $\nu$ and the change of variables we have
\[
\begin{aligned}
||T^p_x f||^p_p = \bigg( \frac{3\sqrt{d}}{r} \bigg)^d  \int_{\Omega_x(r)}  
 \frac{|f( \frac{3\sqrt{d}}{r} \kappa(u))|^p }{|\det g_\kappa(u) |^{1/2}} d\nu(u)
 & = \bigg(\frac{3\sqrt{d}}{r} \bigg)^d  \int_{B(0,r)}  \bigg |f \bigg( \frac{3\sqrt{d}}{r}u \bigg) \bigg|^p du 
 \\
 & =   \int_{B(0,3\sqrt{d} )}   |f(u) |^p du= ||f||^p_p. 
 \end{aligned}
\]
A similar calculation shows that for any $f\in  L^p(B(0, 3\sqrt{d}))$ and $h\in  L^{p'}(B(0, 3\sqrt{d}))$ we have
\[
\lan T^{p}_x f , T^{p'}_x h \ran =\int_{\Omega_x(r)} T^{p}_x f T^{p'}_x h d\nu = \int_{B(0,3\sqrt{d})}  f h dx = \lan f, h \ran.
\]
Take any $k\in L^p(\Omega_x(r))=(L^{p'}(\Omega_x(r))^*$. Then by the definition of adjoint for any $h\in  L^{p'}(B(0, 3\sqrt{d}))$ we have
\[
\lan (T^{p'}_x)^*k, h \ran = \lan k, T^{p'}_x h \ran  = \lan (T^p_x)^{-1} k, h \ran.
\]
Since $h$ is arbitrary we have $(T^p_x)^{-1}=(T^{p'}_x)^*$.
\end{proof}

We choose $0<\ep<1/2$ such that
\[
B(0,1)\subset Q=(-1,1)^d\subset Q_\ep \subset B(0,3\sqrt{d}).
\]
We take a local Parseval frame of smoothness $m\in \N \cup\{\infty\}$
\[
\mathcal W(m,j_0,\ep)= \{ f^{\mathbf e}_{(j,k)}: j\geq j_0, k\in \Gamma_j, {\mathbf e}\in E_j\}
\]
as in Theorem \ref{localframe}.

Next we transport a local Parseval frame $\mathcal W(m,j_0,\ep)$  to the manifold $M$ using operators $T^p_x$ and  $T^{p'}_x$, where $1/p+1/p'=1$.

\begin{lemma}\label{reconst}
 For any $f\in L^p(M)$, $1<p<\infty$, such that  $\supp f \subset \Omega_x(r/(3\sqrt{d}))$, $r=r(x)$, we have a reconstruction formula
 \begin{equation}\label{rec}
f= \sum_{j\geq j_0}\sum_{{\mathbf e}\in E_j} \sum_{k\in\Gamma_j}\lan f, T^{p'}_x f^{\mathbf e}_{(j,k)} \ran T^p_x f^{\mathbf e}_{(j,k)},
\end{equation}
with unconditional convergence in $L^p(M)$. Moreover, 
\begin{equation}\label{perio}
\| f||_{L^p(M)} 
\asymp
\bigg\| \bigg( \sum_{j\geq j_0} \sum_{\mathbf e\in E_j} \sum_{k\in \Gamma_{j}}   \big( |\lan f, T^{p'}_x (f^{\mathbf e}_{(j,k) }) \ran| T^p_x (\chip_{j ,k}) \big)^2    \bigg)^{1/2} \bigg\|_{L^p(M)},
\end{equation}
where $\chip_{j,k}$ are given by \eqref{chip}.
\end{lemma}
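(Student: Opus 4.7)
The plan is to transfer both the reconstruction formula and the square-function characterization from the Euclidean local Parseval frame to the manifold via the isometric isomorphism $T^p_x$ from Lemma \ref{iso}. Setting $h=(T^p_x)^{-1}f$, which lives in $L^p(B(0,3\sqrt{d}))$ and is extended by zero to a function in $L^p(\R^d)$, I would first verify, using the explicit formula \eqref{dtp}, that the support condition $\supp f \subset \Omega_x(r/(3\sqrt{d}))$ is equivalent to $\supp h \subset B(0,1)\subset Q$. This is precisely the geometric hypothesis needed to apply Theorem \ref{localframe}.

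Next I would invoke the classical identification $\F^0_{p,2}(\R^d)=L^p(\R^d)$, valid for $1<p<\infty$, and apply Theorem \ref{localframe}(iii) with $s=0$ and $q=2$ to the function $h$. This yields
\[
h = \sum_{j\geq j_0}\sum_{\mathbf e\in E_j}\sum_{k\in\Gamma_j}\lan h, f^{\mathbf e}_{(j,k)}\ran f^{\mathbf e}_{(j,k)}
\]
with unconditional convergence in $L^p(\R^d)$. The coefficients are rewritten via the adjoint identity from Lemma \ref{iso}: since $(T^p_x)^{-1}=(T^{p'}_x)^*$ with $1/p+1/p'=1$, reflexivity of $L^p$ gives $((T^p_x)^{-1})^*=T^{p'}_x$, so
\[
\lan h, f^{\mathbf e}_{(j,k)}\ran = \lan (T^p_x)^{-1}f, f^{\mathbf e}_{(j,k)}\ran = \lan f, T^{p'}_x f^{\mathbf e}_{(j,k)}\ran.
\]
Applying the continuous linear map $T^p_x$ term-by-term and using $T^p_x h = f$ then yields \eqref{rec}, with unconditional convergence transferring to $L^p(M)$ since continuous linear maps preserve unconditional convergence.

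For the norm equivalence \eqref{perio}, Theorem \ref{localframe}(iv) applied to $h$ with $s=0$ and $q=2$ gives
\[
\|h\|_{L^p(\R^d)}\asymp \bigg\|\bigg(\sum_{j\geq j_0}\sum_{\mathbf e\in E_j}\sum_{k\in\Gamma_j}|\lan h, f^{\mathbf e}_{(j,k)}\ran|^2\chip_{j,k}^2\bigg)^{1/2}\bigg\|_{L^p(\R^d)}.
\]
The key observation is that $T^p_x$ is multiplication by a nonnegative weight composed with a diffeomorphic change of variables, namely $(T^p_x G)(u)=(3\sqrt{d}/r)^{d/p}|\det g_\kappa(u)|^{-1/(2p)}G(3\sqrt{d}\kappa(u)/r)$, so it commutes with pointwise $\ell^2$-combinations of nonnegative functions:
\[
T^p_x\bigg(\sum_{j,\mathbf e,k}|c_{j,k}^{\mathbf e}|^2\chip_{j,k}^2\bigg)^{1/2} = \bigg(\sum_{j,\mathbf e,k}|c_{j,k}^{\mathbf e}|^2(T^p_x\chip_{j,k})^2\bigg)^{1/2}.
\]
Combining this identity with the two isometries $\|f\|_{L^p(M)}=\|h\|_{L^p(\R^d)}$ and $\|T^p_x G\|_{L^p(M)}=\|G\|_{L^p(\R^d)}$ produces \eqref{perio}. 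The main obstacle is really just bookkeeping --- verifying the adjoint identity for the coefficients and that $T^p_x$ commutes with the square-function operation --- and no deep estimate beyond Theorem \ref{localframe}, Lemma \ref{iso}, and the classical identification $L^p(\R^d)=\F^0_{p,2}(\R^d)$ for $1<p<\infty$ is required.
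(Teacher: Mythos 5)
Your proposal is correct and follows essentially the same route as the paper: pull back by $(T^p_x)^{-1}$, apply Theorem \ref{localframe}(iii)--(iv) with $s=0$, $q=2$ under the identification $L^p(\R^d)=\F^0_{p,2}(\R^d)$, rewrite coefficients via $(T^p_x)^{-1}=(T^{p'}_x)^*$, and push forward by the weighted composition operator $T^p_x$. The only detail the paper spells out that you elide is the verification that $\supp \chip_{j,k}\subset[-1,1]^d$ (needed, especially when $m=\infty$, so that $T^p_x(\chip_{j,k})$ is even defined), but this is immediate from \eqref{chip}.
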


\begin{proof}
Since $1<p<\infty$, we can identify the Triebel-Lizorkin space $\F^0_{p,2}(\R^d) = L^p(\R^d)$.
We have 
\[
\supp (T^p_x)^{-1} f  \subset B(0, 1) \subset Q.
\]
Hence, by Theorem \ref{localframe} and Lemma \ref{iso} we have
 \[
(T^p_x)^{-1} f = \sum_{j\geq j_0}\sum_{{\mathbf e}\in E_j} \sum_{k\in\Gamma_j}\lan (T^p_x)^{-1} f , f^{\mathbf e}_{(j,k)} \ran f^{\mathbf e}_{(j,k)}
=\sum_{j\geq j_0}\sum_{{\mathbf e}\in E_j} \sum_{k\in\Gamma_j}\lan  f , T^{p'}_x f^{\mathbf e}_{(j,k)} \ran f^{\mathbf e}_{(j,k)}
,
\]  
with unconditional convergence in $L^p(\R^d)$. Applying $T^p_x$ to both sides yields the reconstruction formula \eqref{rec}. 

By Theorem \ref{localframe} and Lemma \ref{iso}, we have
\[
\begin{aligned}
\| (T^p_x)^{-1} f||_p \asymp
& 
\bigg\| \bigg( \sum_{j\geq j_0} \sum_{\mathbf e\in E_j} \sum_{k\in \Gamma_j}    \big( |\lan (T^p_x)^{-1} f,f^{\mathbf e}_{(j,k)}\ran|\chip_{j,k}\big)^2    \bigg)^{1/2} \bigg\|_p
\\
= &  \bigg\| \bigg( \sum_{j\geq j_0} \sum_{\mathbf e\in E_j} \sum_{k\in \Gamma_j}    \big( |\lan  f,T^{p'}_x(f^{\mathbf e}_{(j,k)}) \ran|\chip_{j,k}\big)^2    \bigg)^{1/2} \bigg\|_p.
\end{aligned}
\]
We claim that 
\begin{equation}\label{chi}
\supp \chip_{j,k} \subset [-1,1]^d \qquad\text{for } j\ge j_0,\  k\in \Gamma_j.
\end{equation}
Indeed, by \eqref{chip} there exists $k'\in \Lambda_j$ such that
\[
\supp \chip_{j,k} = \supp \chip_{j,k'} = (2^j \lambda)^{-1}([0,1]^d+k') \subset [0,2^{-j}\lambda]^d + [-1/2,1/2]^d \subset [-1,1]^d.
\]
Hence, we can apply the operator $T^p_x$ to functions $\chip_{j,k}$. Using \eqref{dtp} and Lemma \ref{iso} yields
\eqref{perio}.
\end{proof}

\begin{theorem}\label{pframes}
Let $M$ be a connected Riemannian manifold (without boundary) and $1<p<\infty$. 
Let $\mathcal W(m,j_0,\ep)$ be a local Parseval frame of smoothness $m\in \N \cup\{\infty\}$, $j_0 \in \N_0$, and $0<\ep<1/2$. Then, there exists at most countable subset $X\subset M$ and a collection of projections $P_{\Omega_x}$, $x\in X$, on $L^p(M)$ such that:
\begin{enumerate}[(i)]
\item
 for $f\in L^p(M)$, 
\[
f=\sum_{x\in X} \sum_{j\geq j_0}\sum_{{\mathbf e}\in E_j} \sum_{k\in\Gamma_j}\lan f, (P_{\Omega_x})^* T^{p'}_x f^{\mathbf e}_{(j,k)} \ran  P_{\Omega_x} T^p_x f^{\mathbf e}_{(j,k)},
\]  
with unconditional convergence in $L^p(M)$,
\item for $f\in L^{p'}(M)$, $1/p+1/p'=1$,
\[
f=\sum_{x\in X} \sum_{j\geq j_0}\sum_{{\mathbf e}\in E_j} \sum_{k\in\Gamma_j}\lan f, P_{\Omega_x} T^p_x f^{\mathbf e}_{(j,k)} \ran (P_{\Omega_x})^* T^{p'}_x f^{\mathbf e}_{(j,k)},
\]
with unconditional convergence in $L^{p'}(M)$,
\item
for any $f\in L^p(M)$ we have
\begin{equation}\label{cwp}
||f||_{L^p(M)}^p \asymp
\sum_{x\in X} \bigg\| \bigg( \sum_{j\geq j_0} \sum_{\mathbf e\in E_j} \sum_{k\in \Gamma_j}    ( |\lan f, (P_{\Omega_x})^*T^{p'}_x (f^{\mathbf e}_{(j,k) }) \ran| T^p_x (\chip_{j ,k}) \big)^2    \bigg)^{1/2} \bigg\|_{L^p(M)}^p.
\end{equation}
\end{enumerate}
\end{theorem}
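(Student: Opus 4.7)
The plan is to marry the smooth decomposition of identity from Theorem \ref{Main} with the local reconstruction formula of Lemma \ref{reconst}. For each $x \in M$ set $\Omega_x = \Omega_x(r(x)/(3\sqrt{d}))$ and consider the open and precompact cover $\{\Omega_x\}_{x \in M}$ of $M$. Since $M$ is second countable, extract a countable subcover indexed by some $X \subset M$. Applying Theorem \ref{Main} to this subcover produces a family $\{P_{\Omega_x}\}_{x \in X}$ of $H$-operators that are mutually disjoint projections forming a smooth decomposition of identity on $L^p(M)$. Each $P_{\Omega_x}$ is localized on $\Omega_x$, so its adjoint $(P_{\Omega_x})^*$ is again an $H$-operator localized on $\Omega_x$; taking adjoints of $\sum_x P_{\Omega_x} = \mathbf I$ delivers the dual decomposition $\sum_x (P_{\Omega_x})^* = \mathbf I$ on $L^{p'}(M)$.

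For (i), take $f \in L^p(M)$ and write $f = \sum_{x \in X} P_{\Omega_x} f$ with unconditional convergence in $L^p(M)$ by Theorem \ref{Main}(v). Localization forces $\supp(P_{\Omega_x} f) \subset \Omega_x$, so Lemma \ref{reconst} applied to each summand yields
\[
P_{\Omega_x} f = \sum_{j,\mathbf e, k} \lan P_{\Omega_x} f, T^{p'}_x f^{\mathbf e}_{(j,k)} \ran T^p_x f^{\mathbf e}_{(j,k)}.
\]
Applying $P_{\Omega_x}$ to both sides, which leaves the left-hand side unchanged because $P_{\Omega_x}^2 = P_{\Omega_x}$, and rewriting the inner products via $\lan P_{\Omega_x} f, g \ran = \lan f, (P_{\Omega_x})^* g \ran$, produces the expansion of $P_{\Omega_x} f$ claimed in (i). Summing over $x \in X$ completes the reconstruction formula.

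The proof of (ii) is obtained verbatim with the roles of $p$ and $p'$ swapped: start from $f = \sum_x (P_{\Omega_x})^* f$ in $L^{p'}(M)$, apply Lemma \ref{reconst} (with $p$ and $p'$ interchanged) to each $(P_{\Omega_x})^* f$, then apply the projection $(P_{\Omega_x})^*$ on the left and rewrite the inner products. For (iii), combine Theorem \ref{Main}(vi)
\[
\|f\|_{L^p(M)}^p \asymp \sum_{x \in X} \|P_{\Omega_x} f\|_{L^p(M)}^p
\]
with the local norm equivalence \eqref{perio} applied to each $P_{\Omega_x} f$, once more replacing $\lan P_{\Omega_x} f, T^{p'}_x f^{\mathbf e}_{(j,k)} \ran$ by $\lan f, (P_{\Omega_x})^* T^{p'}_x f^{\mathbf e}_{(j,k)} \ran$. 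The main technical point will be justifying the interchange of the outer sum over $x$ with the inner wavelet sums in (i) and (ii); this is secured by the unconditional strong-operator convergence in Theorem \ref{Main}(v), the unconditional convergence in Lemma \ref{reconst}, and the uniform two-sided bounds established in (iii).
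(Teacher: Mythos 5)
Your proposal is correct and follows essentially the same route as the paper: decompose $f=\sum_x P_{\Omega_x}f$ via Theorem \ref{Main}, apply Lemma \ref{reconst} to each localized piece, use $P_{\Omega_x}^2=P_{\Omega_x}$ and the adjoint identity to rewrite the coefficients, and combine Theorem \ref{Main}(vi) with \eqref{perio} for the norm equivalence. The only cosmetic differences are that the paper obtains the countable index set $X$ from the local finiteness in Theorem \ref{Main}(i) rather than by extracting a countable subcover, and it cites \cite[Theorem 2.15]{BDK} for the fact that $\{(P_{\Omega_x})^*\}$ is a smooth decomposition of identity in $L^{p'}(M)$ instead of arguing by taking adjoints directly.
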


\begin{proof}
Let $\mathcal U$ be an open precompact cover consisting of geodesic balls
\[
\mathcal U = \{ \Omega_x  :=\Omega_x(r(x)/(3\sqrt{d})):  x\in M \}.
\]
We apply Theorem \ref{Main} to the open cover $\mathcal U$ to obtain a smooth decomposition of identity $\{P_{\Omega_x}\}_{x\in M}$ in $L^p(M)$, subordinate to $\mathcal U$. By Theorem \ref{Main}(i) at most countably many projections $P_{\Omega_x}$ are non-zero. Hence, there exists at most countable subset $X \subset M$ such that $\{P_{\Omega_x}\}_{x\in X}$ is a smooth decomposition of identity in $L^p(M)$.
By Theorem \ref{Main}(v) for any $f\in L^p(M)$ we have
\[
f= \sum_{x\in X} P_{\Omega_x} f
\]
with unconditional convergence in $L^p(M)$. Applying \eqref{rec} for each function $ P_{\Omega_x} f$, using the fact that $P_{\Omega_x}$ is a projection, and summing over $x\in X$ yields (i). 
By \cite[Theorem 2.15]{BDK} the family $\{(P_{\Omega_x})^*\}_{x\in X}$ is a smooth decomposition of identity in $L^{p'}(M)$. Hence, the same argument yields (ii).
Finally, by Theorem \ref{Main}(vi) we have for any $f\in L^p(M)$,
 \[
 ||f ||^p_p   \asymp   \sum_{x\in X} ||P_{\Omega_x} f ||^{p}_p .
 \]
Applying \eqref{perio} to each function $P_{\Omega_x}f$ yields \eqref{cwp}.
\end{proof}

Let $\mathcal W^p(M)$ denote the wavelet system given by Theorem \ref{pframes}:
\begin{equation}\label{wm}
\mathcal W^p(M)=\{ P_{\Omega_x} T^p_x f^{\mathbf e}_{(j,k)}: x\in X, j\geq j_0, k\in \Gamma_j, {\mathbf e}\in E_j\},
\end{equation}
and its dual wavelet system
\begin{equation}\label{wmp}
\mathcal W^{p'}(M)=\{ (P_{\Omega_x})^{*} T^{p'}_x f^{\mathbf e}_{(j,k)}: x\in X, j\geq j_0, k\in \Gamma_j, {\mathbf e}\in E_j\}.
\end{equation}
Note that by \cite[Theorem 2.15]{BDK} the definition of the dual system \eqref{wmp} is consistent with the definition of the wavelet system \eqref{wm}.

As an immediate corollary of Theorem \ref{pframes} we deduce the fact that $\mathcal W^2(M)$ is a Parseval frame of $L^2(M)$.

\begin{corollary}\label{parseval}
For any $m\in \N\cup \{\infty\}$, the family 
$\mathcal W^2(M)$
is Parseval frame in $L^2(M)$ consisting of $C^m$ functions localized on geodesic balls $\Omega_x$, $x\in X$. That is,
\[
||f||^2_2= \sum_{x\in X} \sum_{j\geq j_0}\sum_{{\mathbf e}\in E_j} \sum_{k\in\Gamma_j} |\lan f, P_{\Omega_x} T^{2}_x f^{\mathbf e}_{(j,k)} \ran  |^2
\qquad\text{for all }f\in L^2(M).
\]
\end{corollary}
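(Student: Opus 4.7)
The strategy is to specialize Theorem \ref{pframes} to the case $p=p'=2$ and to exploit the two special features of Theorem \ref{Main} at $p=2$: the decomposition constant collapses to $C=1$, and each $P_{\Omega_x}$ is an orthogonal projection. In particular, $(P_{\Omega_x})^*=P_{\Omega_x}$, so the dual wavelet system $\mathcal W^{p'}(M)$ defined in \eqref{wmp} coincides with $\mathcal W^2(M)$ when $p=2$, and the biorthogonal reconstruction from Theorem \ref{pframes} becomes self-dual.

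For the Parseval identity, the plan is first to apply Theorem \ref{Main}(vi) with $p=2$, which yields
\[
\|f\|_2^2 = \sum_{x\in X} \|P_{\Omega_x} f\|_2^2 \qquad\text{for all } f\in L^2(M).
\]
Fix $x\in X$. Since $P_{\Omega_x} f$ is supported on $\Omega_x=\Omega_x(r(x)/(3\sqrt d))$, its pullback $(T^2_x)^{-1}(P_{\Omega_x} f)$ is supported in $B(0,1)\subset Q$, and Theorem \ref{localframe}(ii) applies to it. By Lemma \ref{iso}, $T^2_x$ is an isometry and $(T^2_x)^{-1}=(T^2_x)^*$, so the resulting identity reads
\[
\|P_{\Omega_x} f\|_2^2 = \sum_{j\ge j_0}\sum_{\mathbf e\in E_j}\sum_{k\in\Gamma_j}\bigl|\lan P_{\Omega_x} f,\, T^2_x f^{\mathbf e}_{(j,k)}\ran\bigr|^2.
\]
Using self-adjointness of $P_{\Omega_x}$, each inner product rewrites as $\lan f,\, P_{\Omega_x} T^2_x f^{\mathbf e}_{(j,k)}\ran$. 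Summing over $x\in X$ then produces the claimed Parseval identity.

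For the smoothness and localization claims, I would argue as follows. Each $f^{\mathbf e}_{(j,k)}$ belongs to $C^m(\R^d)$ by Theorem \ref{localframe}(i), and the rescaled exponential chart and the smooth density factor appearing in \eqref{dtp} preserve $C^m$ regularity; hence $T^2_x f^{\mathbf e}_{(j,k)}\in C^m(M)$. Since $P_{\Omega_x}\in\mathcal H(M)$, Theorem \ref{crm} (applied for each finite $r\le m$, and trivially for $m=\infty$) gives $P_{\Omega_x} T^2_x f^{\mathbf e}_{(j,k)}\in C^m(M)$. Finally, because $P_{\Omega_x}$ is localized on $\Omega_x$ in the sense of Definition \ref{localized}, the support of $P_{\Omega_x} T^2_x f^{\mathbf e}_{(j,k)}$ lies in $\Omega_x$, as required.

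This corollary does not really contain a difficult step; the main conceptual point is simply to observe that at $p=2$ the decomposition constants reduce to $1$ and the self-adjointness of the smooth projections turns the $L^p$ unconditional frame bounds of Theorem \ref{pframes} into a true Parseval identity in $L^2(M)$.
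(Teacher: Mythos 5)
Your proof is correct and follows essentially the same route as the paper: specialize to $p=2$, where $(P_{\Omega_x})^*=P_{\Omega_x}$ is an orthogonal projection and the decomposition constant equals $1$, and transport the local Parseval property of Theorem \ref{localframe}(ii) to $M$ via the isometry $T^2_x$. The only cosmetic difference is that the paper deduces the Parseval identity from the self-dual reconstruction formula of Theorem \ref{pframes}(i) (by pairing it with $f$), whereas you obtain it directly from the exact Pythagorean identity $\|f\|_2^2=\sum_{x\in X}\|P_{\Omega_x}f\|_2^2$ of Theorem \ref{Main}(vi) together with Theorem \ref{localframe}(ii); both arguments are equally valid.
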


\begin{proof}
When $p=2$ we have $(P_{\Omega_x})^*=P_{\Omega_x}$ is an orthogonal projection on $L^2(M)$. By Theorem \ref{pframes}(i) we have 
\[
f=\sum_{x\in X} \sum_{j\geq j_0}\sum_{{\mathbf e}\in E_j} \sum_{k\in\Gamma_j}\lan f, P_{\Omega_x} T^{p}_x f^{\mathbf e}_{(j,k)} \ran  P_{\Omega_x} T^p_x f^{\mathbf e}_{(j,k)}
\qquad\text{for } f\in L^2(M),
\]  
with unconditional convergence in $L^2(M)$. Since $P_{\Omega_x}$ is an $H$-operator localized on $\Omega_x$ and operators $T^p_x$ preserve smoothness, we deduce the corollary.
\end{proof}

For general $1<p<\infty$, Theorem \ref{pframes} implies that the pair $(\mathcal W^p(M), \mathcal W^{p'}(M))$ is an unconditional frame of $L^p(M)$. The concept of a Banach frame was originally introduced by Gr\"ochenig \cite{Grochenig}, see also \cite[Definition 2.2]{Casazza}. We shall use the following definition of a (Schauder) frame \cite[Definition 2.2]{cdosz}. 

\begin{definition}
Let $B$ be a an infinite dimensional separable Banach space. Let $B'$ be the dual space of $B$. A sequence $(f_j, g_j)_{j\in \N}$ 
with $(f_j)_{j\in \N} \subset B$ and $(g_j)_{j\in \N} \subset B'$, is called a (Schauder) frame of $B$ if for every $f\in B$ we have
\[
f = \sum_{j\in \N} \lan g_j, f\ran f_j 
\]
with convergence in norm, i.e.,  $f=\lim_{n\to \infty} \sum_{j=1}^n  \lan g_j, f\ran f_j $. An unconditional frame of $B$ is a frame  $(f_j, g_j)_{j\in \N}$ 
 of $B$ for which the above series converges unconditionally.
\end{definition}

A frame in a Banach space can be equivalently characterized in terms of a space of scalar valued sequences, see \cite[Theorem 2.6]{Casazza}. In particular, we have the following proposition \cite[Proposition 2.4]{cdosz}.

\begin{proposition}
A sequence $(f_j, g_j)_{j\in \N}$ is an unconditional frame of $B$ if and only if the following conditions hold:
\begin{enumerate}[(i)]
\item there exists a Banach space $Z$ of scalar valued sequences  
such that coordinate vectors $(e_j)_{j\in \N}$ form an unconditional basis of $Z$ with  corresponding coordinate functionals $(e_j^*)_{j\in \N}$, 
\item there exist an isomorphic embedding $T: B \to Z$, and a surjection $S: Z \to B$, so that $S \circ T = {\mathbf I}_B$, $S(e_j)=f_j$ for $j\in \N$, and $T^*(e_i^*) =g_j$ for $j\in \N$ with $f_j \ne 0$.  
\end{enumerate}
\end{proposition}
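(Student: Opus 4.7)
The plan is to treat the two implications separately, with the forward direction (constructing $Z$, $T$, $S$ from the frame) carrying the bulk of the work.

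For the reverse implication, I would argue directly. Given $Z, T, S$ satisfying (i)--(ii), fix $f\in B$. Since $(e_j)_{j\in\N}$ is an unconditional basis of $Z$ with coordinate functionals $(e_j^*)_{j\in\N}$, the expansion
\[
Tf = \sum_{j\in\N} e_j^*(Tf)\, e_j
\]
converges unconditionally in $Z$. Rewriting $e_j^*(Tf) = T^*(e_j^*)(f) = \lan g_j, f\ran$ (with the convention that $g_j$ may be chosen arbitrarily when $f_j=0$, since those terms contribute nothing), and applying the bounded surjection $S$ together with $S\circ T = \mathbf I_B$ and $S(e_j)=f_j$, gives
\[
f = S(Tf) = \sum_j \lan g_j, f\ran\, f_j,
\]
with unconditional convergence in $B$ by continuity of $S$.

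For the forward implication, I would define the candidate coefficient space
\[
Z := \bigg\{ a = (a_j)_{j\in\N} : \|a\|_Z = \sup_{N\in\N,\ \epsilon\in\{\pm 1\}^N}\Big\|\sum_{j=1}^N \epsilon_j a_j f_j\Big\|_B <\infty \bigg\},
\]
viewed as the completion of finitely supported sequences in this norm (passing to a quotient if some $f_j=0$). The key verifications are: (1) $(e_j)$ is an unconditional basis of $Z$ with coordinate functionals the natural coordinate evaluations $e_j^*$, which follows from density of finitely supported sequences and the sign-invariance built into $\|\cdot\|_Z$; (2) the synthesis map $S:Z\to B$, $S(a)=\sum_j a_j f_j$, is a bounded surjection, with $\|S(a)\|_B \le \|a\|_Z$ and norm convergence in $B$ secured by a Cauchy argument exploiting uniformly bounded partial sums; (3) the analysis map $T:B\to Z$, $T(f) = (\lan g_j,f\ran)_{j\in\N}$, is bounded. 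Given these, $S\circ T = \mathbf I_B$ is simply the frame reconstruction, $S(e_j)=f_j$ is tautological, and $T^*(e_j^*)=g_j$ for $j$ with $f_j\ne 0$ follows from a direct duality computation.

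The main obstacle is establishing the boundedness of $T$ in step (3). The route is: for each sign sequence $\epsilon$ and each $N$, the operator $U_{\epsilon,N}:B\to B$ given by $U_{\epsilon,N} f = \sum_{j=1}^N \epsilon_j \lan g_j, f\ran f_j$ is a bounded linear operator, and unconditional convergence of the frame expansion for each $f$ implies that $U_{\epsilon,N} f$ converges in $B$ as $N\to\infty$ for every choice of $\epsilon$. The Banach--Steinhaus theorem then yields $\sup_{\epsilon,N}\|U_{\epsilon,N}\|<\infty$, which is precisely the estimate $\|Tf\|_Z\le C\|f\|_B$. A secondary point is that the embedding $T$ must be isomorphic onto its image; this is automatic once $S$ is exhibited as a left inverse, since $\|f\|_B = \|S\circ T(f)\|_B \le \|S\|\,\|Tf\|_Z$ supplies the reverse inequality.
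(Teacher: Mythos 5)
The paper does not prove this proposition at all: it is quoted verbatim from \cite[Proposition 2.4]{cdosz}, so there is no internal argument to compare against. Your blind proof is correct and is essentially the standard construction behind that cited result: the reverse implication by pushing the unconditional basis expansion of $Tf$ through $S$, and the forward implication by taking $Z$ to be the completion of the finitely supported sequences under the sign-supremum norm $\sup_{N,\epsilon}\|\sum_{j\le N}\epsilon_j a_j f_j\|_B$, with $S$ the synthesis map, $T$ the analysis map, boundedness of $T$ via Banach--Steinhaus applied to the operators $U_{\epsilon,N}$ (pointwise boundedness being the standard fact that the signed finite partial sums of an unconditionally convergent series form a bounded set), and the lower bound for $T$ coming for free from $\|f\|_B=\|S(Tf)\|_B\le\|S\|\,\|Tf\|_Z$. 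All the key verifications you list go through: the coordinate functionals are bounded on $Z$ because $|a_j|\,\|f_j\|_B\le\|a\|_Z$, the truncations of $Tf$ are Cauchy in $Z$ by the Cauchy criterion for unconditional convergence, and $T^*(e_j^*)=g_j$ is the asserted duality identity. The one point I would tighten is your parenthetical treatment of indices with $f_j=0$: quotienting by the null space of the seminorm kills those coordinates, so $(e_j)_{j\in\N}$ indexed by all of $\N$ would then contain zero vectors and could not be a basis of the quotient. A cleaner fix is to take $Z$ to be the direct sum of your space (built on $\{j:f_j\ne 0\}$) with $c_0$ of the degenerate index set, extending $S$ by $S(e_j)=0=f_j$ there; this is consistent with the statement, which deliberately requires $T^*(e_j^*)=g_j$ only when $f_j\ne 0$. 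This is a cosmetic repair, not a gap in the substance of your argument.
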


The operator $T$ is often called an analysis transform, $S$ is a synthesis transform, and $Z$ is the sequence space of frame coefficients. We can reformulate Theorem \ref{pframes} in terms of Banach frames as follows. 

\begin{corollary}\label{cfp}
Let $M$ be a connected Riemannian manifold (without boundary) and $1<p<\infty$. 
Then the pair of dual wavelet systems  $(\mathcal W^p(M), \mathcal W^{p'}(M))$, given by Theorem \ref{pframes}, is an unconditional frame of  $L^p(M)$. \end{corollary}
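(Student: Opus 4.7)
The plan is to verify directly the definition of unconditional (Schauder) frame, treating Theorem \ref{pframes} as a black box. Since $X$ is at most countable and for each $x\in X$ the parameter set $\{(j,k,\mathbf{e}): j\geq j_0,\ \mathbf{e}\in E_j,\ k\in \Gamma_j\}$ is countable, the total index set
\[
\mathcal I = \{(x,j,k,\mathbf{e}): x\in X,\ j\geq j_0,\ \mathbf{e}\in E_j,\ k\in \Gamma_j\}
\]
is countable. Fix any bijection $\mathcal I \leftrightarrow \N$ and let $(f_n, g_n)_{n\in \N}$ be the corresponding enumeration of the pair of sequences $(P_{\Omega_x} T^p_x f^{\mathbf{e}}_{(j,k)},\ (P_{\Omega_x})^* T^{p'}_x f^{\mathbf{e}}_{(j,k)})$. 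By Lemma \ref{iso} and the localization of the projections $P_{\Omega_x}$, we have $(f_n)_{n\in \N} \subset L^p(M)$ and $(g_n)_{n\in \N} \subset L^{p'}(M)=(L^p(M))^*$.

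Next, Theorem \ref{pframes}(i) gives, for every $f\in L^p(M)$, the expansion
\[
f = \sum_{(x,j,k,\mathbf{e}) \in \mathcal I} \lan f, (P_{\Omega_x})^*T^{p'}_x f^{\mathbf{e}}_{(j,k)} \ran\, P_{\Omega_x} T^p_x f^{\mathbf{e}}_{(j,k)}
\]
with unconditional convergence in $L^p(M)$. Since the notion of unconditional convergence of a sum indexed by a countable set is independent of the enumeration, transferring this along our chosen bijection yields
\[
f = \sum_{n\in \N} \lan g_n, f\ran f_n,
\]
with unconditional norm convergence. This is exactly the definition of an unconditional Schauder frame, so $(\mathcal W^p(M), \mathcal W^{p'}(M))$ is an unconditional frame of $L^p(M)$.

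If instead one wishes to invoke the Proposition preceding the Corollary, one takes $Z$ to be the space of scalar sequences indexed by $\mathcal I$ with the norm suggested by \eqref{cwp}, namely
\[
\|(s_{x,j,k,\mathbf{e}})\|_Z = \left( \sum_{x\in X} \Big\| \Big( \sum_{j\geq j_0}\sum_{\mathbf{e}\in E_j}\sum_{k\in \Gamma_j} \bigl(|s_{x,j,k,\mathbf{e}}| T^p_x\chip_{j,k}\bigr)^2 \Big)^{1/2} \Big\|_{L^p(M)}^p\right)^{1/p}.
\]
This is a solid lattice-type norm, so the coordinate vectors form an unconditional basis of $Z$. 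The analysis transform $T f = (\lan g_{x,j,k,\mathbf{e}}, f\ran)$ is an isomorphic embedding of $L^p(M)$ into $Z$ by the two-sided equivalence \eqref{cwp}; the synthesis map $S$ is defined on finite sequences by $S(e_{x,j,k,\mathbf{e}}) = f_{x,j,k,\mathbf{e}}$ and extended by continuity, with $S\circ T = \mathbf{I}$ provided by Theorem \ref{pframes}(i), while the identities $T^\ast(e_{x,j,k,\mathbf{e}}^\ast) = g_{x,j,k,\mathbf{e}}$ are automatic from the definition of $T$.

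The only point that deserves attention is the boundedness of $S$ on all of $Z$, since a priori $S$ is only given on finite sequences. This can be obtained by duality: apply Theorem \ref{pframes} with $p$ replaced by $p'$ to the dual family $\mathcal W^{p'}(M)$, giving an isomorphic embedding $T': L^{p'}(M) \to Z'$ into the sequence space dual to $Z$; then $S$ is identified with $(T')^*$ restricted to $Z \subset (Z')^*$, which yields the required boundedness. In view of this, the main (mild) obstacle is purely bookkeeping---namely, matching the symmetric roles of the two systems in Theorem \ref{pframes}(i),(ii) with the asymmetric roles of $T$ and $S$ in the Proposition---but no substantive analysis beyond Theorem \ref{pframes} is needed.
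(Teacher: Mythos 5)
Your first argument is exactly the paper's (implicit) proof: Theorem \ref{pframes}(i) already asserts, for every $f\in L^p(M)$, the expansion $f=\sum \lan f, (P_{\Omega_x})^*T^{p'}_x f^{\mathbf e}_{(j,k)}\ran P_{\Omega_x}T^p_x f^{\mathbf e}_{(j,k)}$ with unconditional convergence in $L^p(M)$, and since unconditional convergence over a countable index set is enumeration-independent, this is verbatim the definition of an unconditional Schauder frame. That part is complete and correct, and it is all the Corollary requires.

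The alternative route you sketch through the Proposition is not sound as stated, and the paper explicitly warns against it in the sentence following the Corollary and in Remarks \ref{synt} and \ref{synt2}: when $m=\infty$ the wavelet system is built from localized Meyer wavelets with $\Gamma_j=\Z^d$ and periodized weights $\chip_{j,k}$, so the system is highly redundant and the sequence space $Z$ defined by the quasi-norm in \eqref{cwp} alone is \emph{not} the correct coefficient space; one must add the decay term $\sup 2^{j\mu}(|2^{-j}k|_\infty+1)^\mu|s^{\mathbf e}_{(j,k)}|$ over $k\in\Gamma_j\setminus\Lambda_j$ before the synthesis operator becomes bounded. Your duality argument for the boundedness of $S$ also has a circularity: identifying $S$ with $(T')^*$ restricted to $Z$ requires a bounded pairing between $Z$ and the range space of $T'$, and proving that pairing is bounded is essentially equivalent to proving $S$ bounded, so nothing is gained for free. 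None of this affects the validity of the Corollary, since the direct verification of the Schauder-frame definition in your first paragraph does not need the Proposition at all; but the claim that the second route is ``purely bookkeeping'' should be dropped.
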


The sequence space of frame coefficients is described via the formula \eqref{cwp} when the smoothness parameter $m$ is finite. If $m=\infty$, it is necessary to add an additional decay term as in Remark \ref{synt}, see also Remark \ref{synt2}.
In the case when $M$ has bounded geometry, we can improve this construction.

\begin{theorem}\label{pcof}
Let $M$ be a connected $d$-dimensional Riemannian manifold with bounded geometry and $1<p<\infty$. 
Then the dual wavelet system  $(\mathcal W^p(M), \mathcal W^{p'}(M))$ from Theorem \ref{pframes} can be chosen in such a way that there exist sets $\Omega_{j,k,x} \subset M$ satisfying
\begin{equation}\label{cwp2}
||f||_{L^p(M)} \asymp
 \bigg\|  \bigg( \sum_{x\in X} \sum_{j\geq j_0}\sum_{{\mathbf e} \in E_j} \sum_{k\in \Gamma_{j}}
 2^{jd}  |\lan f, (P_{\Omega_x})^*T^{p'}_x (f^{\mathbf e}_{(j,k) }) \ran|^2 \chi_{\Omega_{j,k,x}} \bigg)^{1/2} \bigg\|_{L^p(M)}.
\end{equation}
\end{theorem}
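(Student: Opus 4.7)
The plan is to exploit bounded geometry to make both the local diffeomorphisms $T^p_x$ and the decomposition of identity from Theorem \ref{Main} uniform in $x\in X$, and then to unfold the expression \eqref{cwp} from Theorem \ref{pframes}. First, I would use Lemma \ref{male} to fix a single radius $r>0$ with $r<r_{inj}/2$ (and sufficiently small that the injectivity and covering arguments go through) and a countable set $X=\{x_j\}\subset M$ such that the balls $\Omega_{x_j}(r/(3\sqrt d))$ form a cover of $M$ of uniformly bounded multiplicity $N$. With this choice, all charts $\kappa_x= i_x\circ \exp_x^{-1}$ are defined on the same size neighborhood, and by bounded geometry there is a constant $c_0>0$ such that $c_0^{-1}\le \det g_{\kappa_x}(u) \le c_0$ for every $x\in X$ and every $u\in \Omega_x(r)$.

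The next step is to identify the sets $\Omega_{j,k,x}$. From the definition \eqref{chip} of $\chip_{j,k}$ there is a representative $k'\in \Lambda_j$ (taking $k'=k$ in the finite smoothness case) such that $\chip_{j,k}(y) = 2^{jd/2}\chi_I(2^j\lambda y - k')$, where $I=[0,1]^d$. Substituting this into the definition \eqref{dtp} of $T^p_x$ yields
\[
T^p_x \chip_{j,k}(u) = \Big(\tfrac{3\sqrt d}{r}\Big)^{d/p}\frac{2^{jd/2}}{|\det g_{\kappa_x}(u)|^{1/(2p)}}\, \chi_{\Omega_{j,k,x}}(u),
\]
where I set
\[
\Omega_{j,k,x} := \kappa_x^{-1}\bigl(\tfrac{r}{3\sqrt d}\,(2^j\lambda)^{-1}(k'+I)\bigr)\cap \Omega_x(r).
\]
The uniform bounds on $r$ and on $\det g_{\kappa_x}$ then give $T^p_x\chip_{j,k}(u) \asymp 2^{jd/2}\chi_{\Omega_{j,k,x}}(u)$ with constants independent of $x,j,k,\mathbf e$. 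Squaring and summing, the inner square function in \eqref{cwp} becomes, pointwise and up to a universal constant,
\[
\sum_{j\ge j_0}\sum_{\mathbf e\in E_j}\sum_{k\in \Gamma_j}\!\!\bigl(|\lan f,(P_{\Omega_x})^*T^{p'}_xf^{\mathbf e}_{(j,k)}\ran|\,T^p_x\chip_{j,k}\bigr)^{\!2}\;\asymp\; g_x(u)^2,
\]
where $g_x(u)^2 := \sum_{j,\mathbf e,k} 2^{jd}|\lan f,(P_{\Omega_x})^*T^{p'}_xf^{\mathbf e}_{(j,k)}\ran|^2\chi_{\Omega_{j,k,x}}(u)$.

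Finally, I would remove the outer $\ell^p$-sum over $x\in X$ in favor of an $\ell^2$-sum inside the $L^p$-norm, using that each $g_x$ is supported in the ball $\Omega_x$ (because $P_{\Omega_x}$ is localized on $\Omega_x$, and $\Omega_{j,k,x}\subset \Omega_x$ after absorbing the localization into the sets), and that the cover $\{\Omega_x\}_{x\in X}$ has uniformly bounded overlap $N$. For nonnegative functions $g_x$ with bounded overlap supports one has the pointwise comparison $g_x(u)^p \le \bigl(\sum_{y\in X}g_y(u)^2\bigr)^{p/2}\le N^{p/2}\max_y g_y(u)^p \le N^{p/2}\sum_{y\in X}g_y(u)^p$, so integrating and comparing with \eqref{cwp} from Theorem \ref{pframes} gives
\[
\|f\|_{L^p(M)}^p \asymp \sum_{x\in X}\|g_x\|_{L^p(M)}^p \asymp \Bigl\|\bigl(\textstyle\sum_{x\in X}g_x^2\bigr)^{1/2}\Bigr\|_{L^p(M)}^p,
\]
which is exactly \eqref{cwp2}. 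The only nonroutine step is the bounded overlap reduction: it hinges on the uniform choice of $r$ and on the multiplicity bound from Lemma \ref{male}(iii), both of which require the bounded geometry hypothesis and are not available in the general setting of Theorem \ref{pframes}.
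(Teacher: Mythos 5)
Your proposal is correct and follows essentially the same route as the paper: fix a uniform radius via bounded geometry, take a bounded-multiplicity cover from Lemma \ref{male}, define $\Omega_{j,k,x}$ as the chart preimage of $\supp\chip_{j,k}$, use the uniform bounds on $\det g_{\kappa_x}$ to get $T^p_x\chip_{j,k}\asymp 2^{jd/2}\chi_{\Omega_{j,k,x}}$, and then trade the outer $\ell^p$-sum over $x$ for the $\ell^2$-sum inside the $L^p$-norm via the finite-overlap/finite-dimensional quasi-norm equivalence. The only cosmetic imprecision is that the supports $\Omega_{j,k,x}$ sit in the enlarged balls $\Omega_x(r_0/2)$ rather than in $\Omega_x$ itself, which is exactly why Lemma \ref{male}(iii) with $l=3\sqrt d/2$ is invoked, as you correctly note at the end.
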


\begin{proof}
Since $M$ has positive injectivity radius, there exists $r_0<r_{inj}$ such that 
 the exponential geodesic map $\exp_x$ is well defined  diffeomorphism of a ball $B(0, r) \subset T_xM$ and $\Omega_x(r)$ with the same radius $r=r_0$ for all $x\in M$. By Lemma \ref{male} applied to $r'=r_0/(3\sqrt{d})<r_{inj}/2$ and $l=3\sqrt{d}/2$, there exists a set 
of points $X' \subset M$ (at most countable)  such that
 the family of balls $\mathcal U = \{\Omega_{x}(r'/2)\}_{x\in X'}$ is a cover of $M$, and the multiplicity of the cover $ \{\Omega_{x}(r'l)=\Omega_x(r_0/2)\}_{x\in X'}$
  is finite. Repeating the proof of Theorem \ref{pframes} for $\mathcal U$ yields the same conclusion with additional property that  $X\subset X'$. 
  In addition, we also have formula \eqref{cwp}.
 For $j\ge j_0$, $k\in \Gamma_{j}$, and $x\in X$ we define
 \[
 \Omega_{j,k,x} = (\kappa_x)^{-1}( r_0/(3\sqrt{d}) \supp \chip_{j,k} ).
 \]
 By \eqref{chi} we have 
 \begin{equation}\label{ino}
 \Omega_{j,k,x} \subset (\kappa_x)^{-1}( r_0/(3\sqrt{d}) [-1,1]^d) \subset \Omega_x(r_0/2).
 \end{equation}
 By \eqref{dtp} we have
 \[
|T^p_x (\chip_{j ,k})(u)| = 2^{jd/2}  \bigg( \frac{3\sqrt{d}}{r_0} \bigg)^{d/p}  \frac{\chi_{\Omega_{j,k,x}}(u)}{|\det g_\kappa(u) |^{1/(2p)}} 
 \qquad\text{for }u\in M.
 \]
 By the assumption of bounded geometry we have
 \begin{equation}\label{tro}
|T^p_x (\chip_{j ,k})(u)| \asymp 2^{jd/2} \chi_{\Omega_{j,k,x}}(u)
 \qquad\text{for }u\in M.
 \end{equation}
Hence, by \eqref{cwp} we have
\[
\begin{aligned}
||f||^p_p & \asymp
 \int_M \sum_{x\in X} \bigg( \sum_{j\geq j_0} \sum_{\mathbf e\in E_j} \sum_{k\in \Gamma_{j}}     2^{jd} |\lan f, (P_{\Omega_x})^*T^{p'}_x (f^{\mathbf e}_{(j,k) }) \ran|^2 \chi_{\Omega_{j,k,x}}(u)   \bigg)^{p/2} d\nu(u)
\\
& \asymp
 \int_M  \bigg( \sum_{x\in X} \sum_{j\geq j_0} \sum_{\mathbf e\in E_j} \sum_{k\in \Gamma_{j}}     2^{jd} |\lan f, (P_{\Omega_x})^*T^{p'}_x (f^{\mathbf e}_{(j,k) }) \ran|^2 \chi_{\Omega_{j,k,x}}(u)   \bigg)^{p/2} d\nu(u).
 \end{aligned}
\]
The last step follows from \eqref{ino}, the fact that the multiplicity of the cover $\{\Omega_{x}(r_0/2)\}_{x\in X}$ is finite, and the equivalence of finite dimensional $\ell^1$ and $\ell^{2/p}$ (quasi)-norms.
\end{proof}

Motivated by Theorem \ref{pcof} we give a definition of discrete Triebel-Lizorkin spaces on manifolds $M$ with bounded geometry.

\begin{definition}\label{fs}
Suppose that the manifold $M$ has bounded geometry. Let $\mathcal W^2(M)$ be a Parseval frame in $L^2(M)$ consisting of $C^m$ functions localized on geodesic balls $\Omega_x=\Omega_x(r_0/3\sqrt{d})$, $x\in X$, as in Corollary \ref{parseval}.
 Let $s \in \R$, $0 < p < \infty$, and $0<q \le \infty$. We define a discrete Triebel-Lizorkin space $\f^s_{p,q}=\f^s_{p,q}(M)$ as a set of sequences 
 \[
 \boldsymbol s= \{s_\psi\}_{\psi\in \mathcal W^2(M)}, \qquad
 \psi = P_{\Omega_x} T^2_x f^{\mathbf e}_{(j,k)}\text{ for } x\in X, j\geq j_0, k\in \Gamma_{j}, {\mathbf e}\in E_j,
 \]
such that
\[
\| \boldsymbol s \|_{\f^s_{p,q}}=
 \bigg\|  \bigg( \sum_{x\in X} \sum_{j\geq j_0}\sum_{{\mathbf e} \in E_j} \sum_{k\in \Gamma_{j}}
 2^{jq(s+d/2)}  |s_\psi|^q \chi_{\Omega_{j,k,x}} \bigg)^{1/q} \bigg\|_{ L^p(M)}
<\infty.
\]
\end{definition}

Note that when $M$ is a compact manifold, the set $X$ is necessarily finite and the above definition is similar to that given by Triebel \cite[Definition 5.7]{Tr5}.

\section{Parseval frames on compact manifolds}\label{S6}

In this section we show a characterization of Triebel-Lizorkin spaces on compact manifolds in terms of magnitudes of coefficients of Parseval wavelet frames constructed in the previous section. Our main theorem is inspired by a result due Triebel \cite[Theorem 5.9]{Tr5}, which we improve upon in two directions. In contrast to \cite{Tr5}, Theorem \ref{triebel} allows the smoothness parameter $m$ to take the value $\infty$. Moreover, it employs a single wavelet system $\mathcal W^2(M)$ for analysis and synthesis transforms, which constitutes a Parseval frame in $L^2(M)$ and it automatically yields a reproducing formula.

We start with the fundamental result about the decomposition of function spaces on compact manifolds, which is an extension of \cite[Theorem 7.1]{BDK} to the setting of Triebel-Lizorkin spaces.

 \begin{theorem}\label{rozklad}
Let $M$ be a smooth compact Riemannian manifold (without boundary).   Let $\mathcal F(M)=\F^s_{p,q}(M)$ be the Triebel-Lizorkin space, where $s\in \R$, $0<p<\infty$,  and $0<q\leq \infty$.
Let $\{P_U\}_{U\in \mathcal U}$ be a smooth orthogonal decomposition of identity in $L^2(M)$, which is subordinate to a finite open cover $\mathcal U$ of $M$. 
Then, we have a direct sum decomposition
\[
\mathcal F(M) = \bigoplus_{U \in \mathcal U} P_U( \mathcal F(M)),
\]
with the equivalence of norms
\[
||f||_{\mathcal F(M)} \asymp \sum_{U \in\mathcal U} ||P_U f||_{\mathcal F(M)} \qquad\text{for all }f\in \mathcal F(M).
\] 
 \end{theorem}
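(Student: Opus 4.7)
The plan is to transfer the smooth orthogonal decomposition of identity from $L^2(M)$ to $\mathcal F(M)=\F^s_{p,q}(M)$ using two ingredients: the boundedness of Hestenes operators on Triebel-Lizorkin spaces (Theorem \ref{hesten}), and a distributional lift of the three algebraic relations $P_U\circ P_U=P_U$, $P_U\circ P_{U'}=0$ for $U\ne U'$, and $\sum_{U\in\mathcal U} P_U=\mathbf I$ that hold on $L^2(M)$. Because $M$ is compact the cover $\mathcal U$ is finite, so all sums are genuinely finite and no convergence issue arises. By Theorem \ref{hesten}, each $P_U\in\mathcal H(M)$ induces a bounded operator on $\mathcal F(M)$; denote its norm by $C_U$ and set $C_{\mathcal U}=\sum_{U\in\mathcal U}C_U$.

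To lift the algebraic relations to $\calD'(M)$ I would use Definition \ref{dist}. By Lemma \ref{hest.adj} each adjoint $P_U^*$ is itself a Hestenes operator, and it maps $\calD(M)$ continuously into $\calD(M)$. Since $M$ is compact, $\calD(M)$ sits densely in $L^2(M)$, and the $C_0$-adjoint furnished by Lemma \ref{hest.adj} coincides with the $L^2$-adjoint on test functions. Dualizing the given $L^2$-identities then yields $(P_U^*)^2=P_U^*$, $P_{U'}^*\circ P_U^*=0$ for $U\ne U'$, and $\sum_U P_U^*=\mathbf I$ on $\calD(M)$. Pairing with $f\in\calD'(M)$ via $Hf(\psi)=f(H^*\psi)$ transfers the three relations to $\calD'(M)$, and in particular to $\mathcal F(M)\subset\calD'(M)$.

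With $f=\sum_{U\in\mathcal U}P_U f$ secured in $\mathcal F(M)$, the upper bound
\[
\|f\|_{\mathcal F(M)}\le C\sum_{U\in\mathcal U}\|P_U f\|_{\mathcal F(M)}
\]
follows by iterating the quasi-triangle inequality over the finite cover, with a constant depending only on $|\mathcal U|$ and on the concavity exponent $\rho=\min(1,p,q)$ of the Triebel-Lizorkin quasi-norm. The reverse inequality $\sum_U\|P_U f\|_{\mathcal F(M)}\le C_{\mathcal U}\|f\|_{\mathcal F(M)}$ is immediate from the boundedness of each $P_U$. To see that the decomposition is direct, suppose $f=\sum_U f_U$ with $f_U=P_U g_U\in P_U(\mathcal F(M))$. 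Applying the bounded operator $P_V$ and using $P_V\circ P_U=\delta_{V,U}P_U$ on $\mathcal F(M)$ yields $P_V f=f_V$, proving uniqueness; each subspace $P_U(\mathcal F(M))$ is closed because $P_U$ is a bounded idempotent.

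The main obstacle is extending the operator identities from $L^2(M)$ to distributions across the full parameter range, in particular when $q=\infty$, where $\calD(M)$ is not dense in $\F^s_{p,q}(M)$ and so one cannot simply argue by approximating $f$ by smooth functions. Working instead on the test-function side via Definition \ref{dist} circumvents this, since the adjoint $P_U^*$ provided by Lemma \ref{hest.adj} is itself a Hestenes operator with good continuity properties on $\calD(M)$. A minor secondary point is verifying that the $C_0(M)$-adjoint of Lemma \ref{hest.adj} agrees with the Hilbert-space adjoint on $L^2(M)$, which for compact $M$ is immediate from the density of $C(M)$ in $L^2(M)$.
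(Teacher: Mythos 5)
Your proposal is correct and follows essentially the same route as the paper: boundedness of the $P_U$ on $\F^s_{p,q}(M)$ via Theorem \ref{hesten}, transfer of the identities $P_U\circ P_{U'}=\delta_{U,U'}P_U$ and $\sum_U P_U=\mathbf I$ from $L^2(M)$ to $\calD'(M)$ through the adjoints and Definition \ref{dist} (using that the $C_0$-adjoint of Lemma \ref{hest.adj} agrees with the $L^2$-adjoint and that the $P_U$ are self-adjoint), and then the finite quasi-triangle inequality for one direction of the norm equivalence and uniform boundedness of the finitely many projections for the other. Your explicit handling of the dualization step and of the $q=\infty$ case is a slightly more detailed rendering of what the paper leaves implicit, but it is not a different argument.
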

 
\begin{proof}
The proof of Theorem \ref{rozklad} employs Theorem \ref{hesten} and is shown in a similar way as in \cite[Theorem 6.1]{BD}. This is possible due to the fact that the number of projections $\{P_U\}_{U \in \mathcal  U}$ is finite and hence they are uniformly bounded on $\mathcal F(M)$.
That is, there exists a  constant $C>0$ such that 
\[
\|P_U f\|_{\mathcal F(M) }\leq C \|f\|_{\mathcal F(M)}
\qquad\text{for all } U \in \mathcal U, \ f\in \mathcal F(M)=\F^s_{p,q}(M).
\]
Since each $P_U$ is a projection, $P_U(\mathcal F(M))=\ker(P_U-\mathbf I)$ is a closed subspace of $\mathcal F(M)$. It remains to show that the operator $T$ defined by $Tf=(P_U f )_{U \in\mathcal U} $ is an isomorphism between $\mathcal F(M)$ and $\bigoplus_{U \in \mathcal U}  P_U(\mathcal F(M))$. Since $\{P_U\}_{U\in \mathcal U}$ is a smooth decomposition of identity in $L^2(M)$ we have
\[
f = \sum_{U\in\mathcal U} P_U f \qquad\text{for all }f\in \mathcal D(M).
\]
Hence, by Definition \ref{dist} and the fact that $(P_U)^*=P_U$ we have
\[
f = \sum_{U\in\mathcal U} P_U f \qquad\text{for all }f\in \mathcal D'(M).
\]
Hence, the operator $T$ is $1$-to-$1$. The operator $T$ is onto due to the fact that $P_U \circ P_V =0$ for $U \not = V \in \mathcal U$.
\end{proof}

Next we show  an analogue of Lemma \ref{reconst} for $\F^{s}_{p,q}(M)$ spaces.

\begin{lemma}\label{l52}
Let $M$ be a $d$-dimensional manifold with bounded geometry.
Let $s\in \R$, $0<p<\infty$,  $0<q\le \infty$. Let $m\in \N\cup \{\infty\}$ be such that
\begin{equation}\label{smp}
m> \max(s, \sigma_{p,q}-s), \qquad \sigma_{p,q}=d\max(1/p-1,1/q-1,0).
\end{equation}
Let $f\in \F^{s}_{p,q}(M)$ be such that $\supp f \subset \Omega_x(r/(3\sqrt{d}))$, where $x\in M$ and $0<r<r_{inj}/8$. 
Then, we have a reconstruction formula
 \begin{equation}\label{recs}
f= \sum_{j\geq j_0}\sum_{{\mathbf e}\in E_j} \sum_{k\in\Gamma_j}\lan f, T^{2}_x f^{\mathbf e}_{(j,k)} \ran T^2_x f^{\mathbf e}_{(j,k)},
\end{equation}
with unconditional convergence in $\F^s_{p,q}$ if $q<\infty$ and in $\F^{s-\epsilon}_{p,q}$ spaces  for any $\epsilon>0$ if $q=\infty$. 
Furthermore,
we have
\begin{equation}\label{perio2}
\| f||_{\F^s_{p,q}(M)}
\asymp
\bigg\| \bigg( \sum_{j\geq j_0} \sum_{\mathbf e\in E_j} \sum_{k\in \Gamma_{j}}  2^{jq(s+d/2)}   |\lan f, T^{2}_x (f^{\mathbf e}_{(j,k) }) \ran|^q \chi_{\Omega_{j ,k,x}}    \bigg)^{1/q}  \bigg\|_{L^p(M)}.
\end{equation}
\end{lemma}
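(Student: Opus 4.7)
The plan is to reduce Lemma \ref{l52} to its Euclidean counterpart, Theorem \ref{localframe}, by transporting $f$ from $M$ to $\R^d$ via $(T^2_x)^{-1}$ and transferring the resulting wavelet expansion back.

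First I would establish that $(T^2_x)^{-1}$, initially defined on $L^2$ via \eqref{dtp}, extends to a topological isomorphism between the subspace of $\F^s_{p,q}(M)$ of distributions supported in $\Omega_x(r/(3\sqrt{d}))$ and the subspace of $\F^s_{p,q}(\R^d)$ of distributions supported in $B(0,1)\subset Q$, with
\[
\| f\|_{\F^s_{p,q}(M)} \asymp \| (T^2_x)^{-1} f\|_{\F^s_{p,q}(\R^d)}.
\]
By Lemma \ref{male} applied with $0<r<r_{inj}/8$, I may choose a cover $\{\Omega_{x_j}(r)\}$ and partition of unity $\{\alpha_j\}$ satisfying \eqref{px}, so that $x=x_{j'}$ for some $j'$ and $\alpha_{j'}\equiv 1$ on $\Omega_{x_{j'}}(r/2)\supset\Omega_x(r/(3\sqrt{d}))$. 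Consequently, only the $j=j'$ term in Definition \ref{tlm} is nonzero, yielding $\|f\|_{\F^s_{p,q}(M)} = \|f\circ\exp_x\circ i_x^{-1}\|_{\F^s_{p,q}(\R^d)}$. Inspection of \eqref{dtp} shows that $(T^2_x)^{-1}$ is obtained from $f\mapsto f\circ\exp_x\circ i_x^{-1}$ by composition with the dilation $v\mapsto v/\lambda$ and by multiplication by $C_0^{-1}|\det g_\kappa|^{1/4}\circ\kappa^{-1}(\cdot/\lambda)$, where the multiplier is $C^\infty$ with uniformly bounded derivatives by the bounded geometry hypothesis. Both operations are isomorphisms on $\F^s_{p,q}(\R^d)$ by Theorems \ref{diffem} and \ref{multiplier}.

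Now let $g=(T^2_x)^{-1} f$. Since condition \eqref{smp} matches \eqref{sm}, Theorem \ref{localframe}(iii) supplies an unconditionally convergent local wavelet expansion of $g$ in $\F^s_{p,q}(\R^d)$ (or in $\F^{s-\epsilon}_{p,q}$ if $q=\infty$). Applying $T^2_x$ termwise and using the identity $\lan g, f^{\mathbf e}_{(j,k)}\ran = \lan f, T^2_x f^{\mathbf e}_{(j,k)}\ran$, which follows from $(T^2_x)^{-1}=(T^2_x)^*$ (Lemma \ref{iso}), yields the reconstruction formula \eqref{recs}; unconditional convergence transfers to the corresponding $\F^s_{p,q}(M)$ subspace by the isomorphism above. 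For the norm equivalence \eqref{perio2}, I would apply Theorem \ref{localframe}(iv) to $g$ and then perform the change of variables $v=\lambda\kappa_x(u)$ in the $L^p(\R^d)$ integral on the right-hand side of \eqref{kwadratowa}. By \eqref{chip} and the definition $\Omega_{j,k,x}=\kappa_x^{-1}(\lambda^{-1}\supp\rho_{j,k})$ one obtains the pointwise identity $\rho_{j,k}(\lambda\kappa_x(u))=2^{jd/2}\chi_{\Omega_{j,k,x}}(u)$, so the integrand becomes $\left(\sum 2^{jq(s+d/2)}|c|^q\chi_{\Omega_{j,k,x}}(u)\right)^{p/q}$, while the Jacobian $\lambda^d/\sqrt{\det g_\kappa(u)}$ lies between two positive constants by bounded geometry. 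Combining these observations yields \eqref{perio2}.

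The principal technical obstacle will be executing this last change of variables uniformly in all parameters, particularly when $m=\infty$. In that case $\Gamma_j=\Z^d$, the sum over $k$ is infinite, and one must verify the identification $\rho_{j,k}(\lambda\kappa_x(u))=2^{jd/2}\chi_{\Omega_{j,k,x}}(u)$ uniformly in $k\in\Z^d$ using the $k'\in\Lambda_j$ convention from \eqref{chip}, together with ensuring that the transfer of unconditional convergence handles the tail contributions implicit in Theorem \ref{localframe}(iii) when $\Gamma_j$ is unbounded.
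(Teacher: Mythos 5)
Your proposal is correct and follows essentially the same route as the paper's proof: collapse the $\F^s_{p,q}(M)$ norm to a single geodesic chart via Lemma \ref{male} with property \eqref{px}, transport $f$ to a distribution supported in $B(0,1)$ by (the inverse of) $T^2_x$ — which the paper realizes as a dilation composed with multiplication by $|\det g_\kappa|^{\pm 1/4}$, controlled by Theorems \ref{multiplier} and \ref{diffem} — apply Theorem \ref{localframe}(iii)--(iv), and pull the expansion and the norm equivalence back using the self-adjointness $(T^2_x)^{-1}=(T^2_x)^*$ and the comparability $|T^2_x(\chip_{j,k})| \asymp 2^{jd/2}\chi_{\Omega_{j,k,x}}$ from bounded geometry. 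The only cosmetic caveat is that your dilation factor is $3\sqrt{d}/r$, not the oversampling parameter $\lambda$ of \eqref{chip}, so you should avoid reusing that symbol.
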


\begin{proof}
In Definition \ref{tlm} of Triebel-Lizorkin spaces we have a freedom of choosing a partition of unity $\{\alpha_j\}$ described in Lemma \ref{male} with $r<r_{inj}/8$. We require that $\{\alpha_j\}$ satisfies \eqref{px} in addition to \eqref{pu} and \eqref{dr}. Consequently, the sum \eqref{tlm0} collapses to one term
\begin{equation}\label{kappa}
||f||_{\F^s_{p,q}(M)} \asymp \|\alpha_{j'} f \circ \exp_{x_{j'}} \circ i_{x_{j'}}^{-1} ||_{\F^s_{p,q}(\R^d)}
=  \|f \circ \exp_{x} \circ i_{x}^{-1} ||_{\F^s_{p,q}(\R^d)}.
\end{equation}
Let $\kappa=i_x \circ \exp_x^{-1}$. For $a>0$ define a dilation operator $\delta_a g(x)= a^{d/2} g(ax)$, where $g$ is a function defined on subset of $\R^d$. We can similarly define a dilation operator on distributions by
\[
\langle  \delta_a g, \phi \rangle = \langle g, \delta_{a^{-1}} \phi \rangle \qquad\text{for }\phi \in \mathcal D(\R^d).
\]
Since $\supp f \subset \Omega_x(r/(3\sqrt{d}))$, by choosing $a=r/(3\sqrt{d})$, we have $\supp \delta_a (f \circ \kappa^{-1}) \subset B(0,1)$. Moreover, $\delta_a (f \circ \kappa^{-1})  \in \F^s_{p,q}(\R^d)$. By Theorem \ref{localframe}(iii) we have
\begin{equation}\label{roz}
\delta_a (f \circ \kappa^{-1}) =\sum_{j\geq j_0} \sum_{\mathbf e\in E_j} \sum_{k\in \Gamma_j} \lan \delta_a (f \circ \kappa^{-1})  , f^{\mathbf e}_{(j,k)} \ran f^{\mathbf e}_{(j,k)}
\end{equation}
with unconditional convergence in $\F^s_{p,q}(\R^d)$ if $q<\infty$ and in $\F^{s-\epsilon}_{p,q}(\R^d)$ spaces  for any $\epsilon>0$ if $q=\infty$. 

Define the operator $T^2_x: L^2(B(0,3\sqrt{d})) \to L^2(\Omega_x(r))$ as in \eqref{dtp}. We can extend the domain of this operator to distributions in $\mathcal D'(\R^d)$ with compact support contained in $B(0,3\sqrt{d})$. Indeed, take any $g \in \mathcal D'(\R^d)$ with $\supp g \subset B(0,3\sqrt{d})$. Then, $\delta_{a^{-1}} g \in \mathcal D'(\R^d)$ satisfies $\supp \delta_{a^{-1}}  g \subset B(0,r)$.
Composing the distribution $\delta_{a^{-1}} g$ with the chart $\kappa$ yields a distribution in $\mathcal D'(M)$ with support in $\Omega_x(r)$. Multiplying it by $|\det g_{\kappa}|^{-1/4}$ yields a distribution $T^2_xg \in\mathcal D'(M)$, satisfying $\supp T^2_xg \subset \Omega_x(r)$. By \eqref{dtp} it follows that this definition agrees on functions. In other words, if $g$ is a function, then
\begin{equation}\label{t2x}
T^2_x g(u) = |\det g_{\kappa}|^{-1/4}(u) (\delta_{a^{-1}}g \circ \kappa)(u) \qquad\text{for }u\in \Omega_x(r).
\end{equation}
Hence, for any $g\in \mathcal D'(\R^d)$ with $\supp g \subset B(0,3\sqrt{d})$ and $\phi \in  \mathcal D(\R^d)$ with $\supp \phi \subset B(0,3\sqrt{d})$  we have
\begin{equation}\label{fact1}
\langle T^2_x g, T^2_x \phi \rangle = \langle  \delta_{a^{-1}}g \circ \kappa,  |\det g_{\kappa}|^{-1/2} \delta_{a^{-1}}\phi \circ \kappa \rangle
=  \langle  \delta_{a^{-1}}g,   \delta_{a^{-1}}\phi  \rangle
= 
\langle g, \phi \rangle.
\end{equation}

We also claim for $g \in \F^s_{p,q}(\R^d)$ with $\supp g \subset B(0,3\sqrt{d})$, we have
\begin{equation}\label{fact2}
||T^2_x g||_{\F^s_{p,q}(M)} \asymp ||T^2_xg \circ \kappa^{-1}||_{\F^s_{p,q}(\R^d)}
= ||(|\det g_{\kappa}|^{-1/4}\circ \kappa^{-1}) \delta_{a^{-1}}g ||_{\F^s_{p,q}(\R^d)}
\asymp
||g||_{\F^s_{p,q}(\R^d)}.
\end{equation}
In the first step we used \eqref{kappa}, whereas the last step uses Theorem \ref{multiplier} and the fact that the multiplier $|\det g_{\kappa}|^{-1/4}\circ \kappa^{-1}$ is bounded and bounded away from zero on $B(0,r)$.

Applying operator $T^2_x$ to both sides of \eqref{roz} and using \eqref{fact1} yields
\begin{equation}\label{recs2}
T^2_x(\delta_a (f \circ \kappa^{-1})) =\sum_{j\geq j_0} \sum_{\mathbf e\in E_j} \sum_{k\in \Gamma_j} \lan T^2_x (\delta_a (f \circ \kappa^{-1}))  , T^2_x f^{\mathbf e}_{(j,k)} \ran T^2_x f^{\mathbf e}_{(j,k)}
\end{equation}
with the same convergence as in \eqref{roz} in light of \eqref{fact2}. If $f$ is a function, then \eqref{t2x} implies that
\begin{equation}\label{t2xx}
T^2_x(\delta_a (f \circ \kappa^{-1}))(u) = \frac{f(u)}{|\det g_\kappa(u)|^{1/4}}
\qquad\text{for }u\in \Omega_x(r).
\end{equation}
Hence, to obtain \eqref{recs} for a distribution $f$ we need to apply \eqref{recs2} for $|\det g_\kappa|^{1/4} f$.

To show \eqref{perio2} we apply Theorem \ref{localframe}(iv) for $\delta_a (f \circ \kappa^{-1})  \in \F^s_{p,q}(\R^d)$
\begin{equation}\label{kwadratowa2}
\| \delta_a (f \circ \kappa^{-1})  ||_{ \F^s_{p,q}(\R^d)} \asymp
\bigg\| \bigg( \sum_{j\geq j_0} \sum_{\mathbf e\in E_j} \sum_{k\in \Gamma_{j}} \big(  2^{js} |\lan \delta_a (f \circ \kappa^{-1}) ,f^{\mathbf e}_{(j,k)}\ran|\chip_{j,k}\big)^q    \bigg)^{1/q} \bigg\|_{L^p(\R^d)}.
\end{equation}
By \eqref{fact2} and \eqref{t2xx} we have
\[
\| \delta_a (f \circ \kappa^{-1})  ||_{\F^s_{p,q}(\R^d)}
\asymp || |\det g_\kappa|^{-1/4} f ||_{ \F^s_{p,q}(M)}.
\]
By \eqref{fact1} we have
\[
\lan \delta_a (f \circ \kappa^{-1}) ,f^{\mathbf e}_{(j,k)}\ran = 
\lan |\det g_\kappa|^{-1/4} f,T^2_x f^{\mathbf e}_{(j,k)}\ran.
\]
Hence, by Lemma \ref{iso}, \eqref{tro}, and by the definition of operator $T^p_x$, we deduce \eqref{perio2}.
\end{proof}

We are now ready to show the main result of the section.

\begin{theorem}\label{triebel}
Let $M$ be a compact $d$-dimensional manifold. Let $ \F^s_{p,q}(M)$ be  a Triebel-Lizorkin space  and let $\f^s_{p,q}$ be its discrete counterpart as in Definition \ref{fs}, where  $s\in \R$, $0<p<\infty$,and  $0<q\le \infty$. Let $\mathcal W^2(M)$ be the Parseval wavelet system with smoothness parameter $m\in \N\cup \{\infty\}$ as in Definition \ref{fs}. Assume \eqref{smp}.
Then the following holds:
\begin{enumerate}[(i)]
\item
If $f\in \F^s_{p,q}(M)$, then
\[
\boldsymbol s =\{s_\psi \}\in \f^s_{p,q}(M) \qquad\text{where } s_\psi = \lan f, \psi \ran, \psi\in \mathcal W^2(M).
\]
Furthermore,
\begin{equation}\label{rownowga}
||f||_{\F^s_{p,q}(M)} \asymp ||\boldsymbol s||_{\f^s_{p,q}(M)}.
\end{equation}
\item For any $f\in \F^s_{p,q}(M)$ we have a reconstruction formula
\[
f=\sum_{\psi\in \mathcal W^2(M)}  \lan f, \psi \ran \psi,
\]
with unconditional convergence in $\F^s_{p,q}$ if $q<\infty$ and in $\F^{s-\varepsilon}_{p,q}$ for any $\varepsilon>0$ if $q=\infty$. 
\end{enumerate}
\end{theorem}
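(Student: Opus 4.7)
The plan is to reduce Theorem \ref{triebel} to Lemma \ref{l52} via the orthogonal decomposition provided by Theorem \ref{rozklad}. Since $M$ is compact, the index set $X$ from Theorem \ref{pframes} is finite, so $\{P_{\Omega_x}\}_{x\in X}$ is a finite smooth orthogonal decomposition of identity on $L^2(M)$ subordinate to the cover $\{\Omega_x\}_{x\in X}$ with $\Omega_x=\Omega_x(r_0/(3\sqrt d))$, and we may assume $r_0<r_{inj}/8$ so that the support hypothesis of Lemma \ref{l52} is automatically satisfied. Theorem \ref{rozklad} then gives
\[
\|f\|_{\F^s_{p,q}(M)}\asymp\sum_{x\in X}\|P_{\Omega_x}f\|_{\F^s_{p,q}(M)}.
\]
By Theorem \ref{hesten} each $P_{\Omega_x}f\in\F^s_{p,q}(M)$, and $P_{\Omega_x}$ being localized on $\Omega_x$ forces $\supp(P_{\Omega_x}f)\subset\Omega_x$.

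Next I apply Lemma \ref{l52} to each $P_{\Omega_x}f$, which yields both the local reconstruction
\[
P_{\Omega_x}f=\sum_{j,\mathbf e,k}\lan P_{\Omega_x}f, T^2_x f^{\mathbf e}_{(j,k)}\ran T^2_x f^{\mathbf e}_{(j,k)}
\]
in the appropriate convergence mode and the local norm equivalence
\[
\|P_{\Omega_x}f\|_{\F^s_{p,q}(M)}\asymp\bigg\|\bigg(\sum_{j,\mathbf e,k}2^{jq(s+d/2)}|\lan P_{\Omega_x}f, T^2_x f^{\mathbf e}_{(j,k)}\ran|^q\chi_{\Omega_{j,k,x}}\bigg)^{1/q}\bigg\|_{L^p(M)}.
\]
Lemma \ref{hest.adj}, Definition \ref{dist}, and the fact that $P_{\Omega_x}^{*}=P_{\Omega_x}$ on $L^2(M)$ let me rewrite the pairing as $\lan P_{\Omega_x}f, T^2_x f^{\mathbf e}_{(j,k)}\ran=\lan f, P_{\Omega_x}T^2_x f^{\mathbf e}_{(j,k)}\ran=\lan f,\psi\ran$ for the corresponding $\psi\in\mathcal W^2(M)$, which exhibits the wavelet coefficient $s_\psi$.

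For part (i), I sum the local norm equivalences over $x\in X$. Since $X$ is finite, pointwise one has $\sum_{x\in X}a_x\asymp\big(\sum_{x\in X}a_x^q\big)^{1/q}$ with constants depending only on $|X|$ and $q$; taking $L^p$ norms and chaining with Theorem \ref{rozklad} produces \eqref{rownowga}. For part (ii), I apply $P_{\Omega_x}$ to the local reconstruction. Because $P_{\Omega_x}$ is a bounded projection on $\F^s_{p,q}(M)$ (and on $\F^{s-\varepsilon}_{p,q}(M)$ when $q=\infty$) by Theorem \ref{hesten}, and $P_{\Omega_x}^2=P_{\Omega_x}$, unconditional convergence is preserved and one obtains
\[
P_{\Omega_x}f=\sum_{j,\mathbf e,k}\lan f, P_{\Omega_x}T^2_x f^{\mathbf e}_{(j,k)}\ran\,P_{\Omega_x}T^2_x f^{\mathbf e}_{(j,k)}.
\]
Summing over the finite set $X$ and invoking $f=\sum_{x\in X}P_{\Omega_x}f$ (which holds on $\F^s_{p,q}(M)\subset\calD'(M)$ via $P_{\Omega_x}^{*}=P_{\Omega_x}$ exactly as in the proof of Theorem \ref{rozklad}) gives the global reconstruction.

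The main obstacle I anticipate is the careful handling of convergence after commuting $P_{\Omega_x}$ with an infinite series, especially in the $q=\infty$ case where Lemma \ref{l52} only delivers convergence in $\F^{s-\varepsilon}_{p,\infty}(M)$. Applying Theorem \ref{hesten} with parameter $s-\varepsilon$ preserves this convergence, so the case reduces to a uniform-in-$\varepsilon$ use of the Hestenes boundedness. Every other step is a finite summation argument whose constants are controlled because $|X|<\infty$.
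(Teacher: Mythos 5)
Your proposal is correct and follows essentially the same route as the paper: decompose $f=\sum_{x\in X}P_{\Omega_x}f$ via Theorem \ref{rozklad} over the finite cover, apply Lemma \ref{l52} to each localized piece $P_{\Omega_x}f$, use $P_{\Omega_x}^*=P_{\Omega_x}$ and $P_{\Omega_x}^2=P_{\Omega_x}$ to identify the coefficients $\lan f,\psi\ran$, and sum over the finite set $X$ using the equivalence of finite-dimensional $\ell^1$ and $\ell^q$ quasi-norms. No substantive differences from the paper's argument.
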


 \begin{proof}
 Fix $r_0>0$ such that $r_0/(3\sqrt{d})<r_{inj}/8$. Let $\mathcal U$ be a finite open cover of $M$ consisting of geodesic balls
\[
\mathcal U = \{ \Omega_x  :=\Omega_x(r_0/(3\sqrt{d})):  x\in X \},
\]
where $X \subset M$ is finite. 
Let $\mathcal W^2(M)$ be a Parseval frame in $L^2(M)$ consisting of $C^m$ functions localized on geodesic balls $\Omega_x=\Omega_x(r_0/3\sqrt{d})$, $x\in X$, as in Theorem \ref{pcof}.
Let $\{P_{\Omega_x}\}_{x\in X}$ be a smooth orthogonal decomposition of identity in $L^2(M)$, which is subordinate to $\mathcal U$, as Theorem \ref{rozklad}.

Let $f\in \mathcal \F^s_{p,q}(M)$. By Theorem \ref{rozklad}
\begin{equation}\label{sp}
||f||_{\F^s_{p,q}(M)} \asymp \bigg(\sum_{x\in X} \|P_{\Omega_x} f||^p_{\F^s_{p,q}(M)} \bigg)^{1/p}.
\end{equation}
By Lemma \ref{l52} and the fact that $P_{\Omega_x}$ is an orthogonal projection, we have
\begin{equation}\label{repf}
P_{\Omega_x} f= \sum_{j\geq j_0}\sum_{{\mathbf e}\in E_j} \sum_{k\in\Gamma_j}\lan P_{\Omega_x} f, P_{\Omega_x} T^{2}_x f^{\mathbf e}_{(j,k)} \ran P_{\Omega_x}T^2_x f^{\mathbf e}_{(j,k)},
\end{equation}
with unconditional convergence in $\F^s_{p,q}$ if $q<\infty$ and in $\F^{s-\epsilon}_{p,q}$ spaces  for any $\epsilon>0$ if $q=\infty$. Summing the above formula over $x\in X$ yields (ii).
Furthermore, by Lemma \ref{l52}
we have
\[
\| P_{\Omega_x} f||_{\F^s_{p,q}(M)}
\asymp
\bigg\| \bigg( \sum_{j\geq j_0} \sum_{\mathbf e\in E_j} \sum_{k\in \Gamma_{j}}  2^{jq(s+d/2)}   |\lan f, P_{\Omega_x} T^{2}_x (f^{\mathbf e}_{(j,k) }) \ran|^q \chi_{\Omega_{j ,k,x}}    \bigg)^{1/q} \bigg\|_{L^p(M)}.
\]
Summing the above formula over $x\in X$ using \eqref{sp} yields (i)
\[
\begin{aligned}
||f||_{\F^s_{p,q}(M)} & \asymp 
\bigg\| \sum_{x\in X} \bigg( \sum_{j\geq j_0} \sum_{\mathbf e\in E_j} \sum_{k\in \Gamma_{j}}  2^{jq(s+d/2)}   |\lan f, P_{\Omega_x} T^{2}_x (f^{\mathbf e}_{(j,k) }) \ran|^q \chi_{\Omega_{j ,k,x}}    \bigg)^{1/q} \bigg\|_{L^p(M)}
\\
& \asymp ||\{\lan f,\psi \ran\}_{\psi \in \mathcal W^2(M)} ||_{\f^s_{p,q}}.
\end{aligned}
\]
\end{proof}

\begin{remark}\label{synt2}
It is tempting to surmise that the sequence space $\f^s_{p,q}(M)$ characterizes coefficients of distributions in $\F^s_{p,q}(M)$ with respect to the wavelet system $\mathcal W^2(M)$. While this is true when the smoothness parameter $m$ is finite, it is actually false when $m=\infty$. This is due to the fact that wavelet system $\mathcal W^2(M)$, which is defined by localizing Meyer wavelets, is highly redundant. To describe the correct sequence space we need to add an additional decay term in the definition of $\f^s_{p,q}(M)$ as it was done in the setting of $\R^d$ in Remark \ref{synt}. We adjust Definition \ref{fs} by introducing the space 
$\f^{s,\mu}_{p,q}(M)$ with decay parameter $\mu>0$ as a collection of all sequences
\[
 \boldsymbol s= \{s_\psi\}_{\psi\in \mathcal W^2(M)}, \qquad
 \psi = P_{\Omega_x} T^2_x f^{\mathbf e}_{(j,k)}\text{ for } x\in X, j\geq j_0, k\in \Gamma_{j}, {\mathbf e}\in E_j,
 \]
with the quasi-norm
\[
\begin{aligned}
\| \boldsymbol s \|_{\f^{s,\mu}_{p,q}}  =
 \bigg\|  \bigg( \sum_{x\in X} \sum_{j\geq j_0}\sum_{{\mathbf e} \in E_j} \sum_{k\in \Gamma_{j}}
&  2^{jq(s+d/2)}  |s_\psi|^q \chi_{\Omega_{j,k,x}} \bigg)^{1/q}  \bigg\|_{L^p(M) }
 \\
& + \sup_{x\in X,\  j\geq j_0,\ \mathbf e \in E_j, \ k\in \Gamma_j \setminus \Lambda_j}
 2^{j\mu}(|2^{-j}k|_\infty+1)^\mu |s_\psi |<\infty.
\end{aligned}
\]
Then for sufficiently large $\mu>0$, the synthesis operator 
\begin{equation}\label{syn2}
\boldsymbol s=(s_\psi) \mapsto \sum_{\psi \in \mathcal W^2(M)}
s_\psi \psi
\end{equation}
is bounded from $\f^{s,\mu}_{p,q}(M)$ into $\F^s_{p,q}(M)$. This is a consequence of Remark \ref{synt} and the fact that the set $X$, which consists of centers of geodesic balls $\Omega_x$ covering a compact manifold $M$, is finite. We leave the details to the reader. As a consequence, the space $\f^{s,\mu}_{p,q}(M)$ characterizes magnitudes of coefficients of distributions in $\F^{s}_{p,q}(M)$ with respect to the wavelet system $\mathcal W^2(M)$, provided that $\mu>\max(d/p,s+d/2)$.
\end{remark}

We finish by stating a counterpart of Theorem \ref{triebel} for Besov spaces. In analogy to Definition \ref{fs} we define a discrete Besov space $\bb^s_{p,q}(M)$ as
as a set of sequences 
 \[
 \boldsymbol s= \{s_\psi\}_{\psi\in \mathcal W^2(M)}, \qquad
 \psi = P_{\Omega_x} T^2_x f^{\mathbf e}_{(j,k)}\text{ for } x\in X, j\geq j_0, k\in \Gamma_{j}, {\mathbf e}\in E_j,
 \]
such that
\[
\| \boldsymbol s \|_{\bb^s_{p,q}(M)}=
\bigg(\sum_{x\in X} \sum_{j\geq j_0}  2^{j(s+d/2-d/p)q} \sum_{\mathbf e\in E_j} \bigg( \sum_{k\in \Gamma_j}   |s^{\mathbf e}_{j,k} |^p
\bigg)^{q/p}    \bigg)^{1/q}<\infty.
\]

\begin{theorem}\label{besov}
Let $M$ be a compact $d$-dimensional manifold. Let $ \B^s_{p,q}(M)$ be  a Besov space, where  $s\in \R$, $0<p<\infty$,and  $0<q\le \infty$. Assume
\[
m> \max(s, \sigma_{p}-s) , \qquad \sigma_{p}=d\max(1/p-1,0).
\]
Then the following holds:
\begin{enumerate}[(i)]
\item
If $f\in \B^s_{p,q}(M)$, then
\[
\boldsymbol s =\{s_\psi \}\in \bb^s_{p,q}(M) \qquad\text{where } s_\psi = \lan f, \psi \ran, \psi\in \mathcal W^2(M).
\]
Furthermore,
\begin{equation}\label{rownowgaB}
||f||_{\B^s_{p,q}(M)} \asymp ||\boldsymbol s||_{\bb^s_{p,q}(M)}.
\end{equation}
\item For any $f\in \B^s_{p,q}(M)$ we have a reconstruction formula
\[
f=\sum_{\psi\in \mathcal W^2(M)}  \lan f, \psi \ran \psi,
\]
with unconditional convergence in $\B^s_{p,q}$ if $q<\infty$ and in $\B^{s-\varepsilon}_{p,q}$ for any $\varepsilon>0$ if $q=\infty$. 
\end{enumerate}
\end{theorem}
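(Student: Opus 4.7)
The plan is to follow exactly the same architecture as the proof of Theorem \ref{triebel}, replacing each Triebel--Lizorkin ingredient by its Besov analogue. The three ingredients we need are: a decomposition theorem of Besov spaces on a compact manifold along $\{P_{\Omega_x}\}_{x\in X}$; a local reconstruction/norm-equivalence lemma (the Besov analogue of Lemma \ref{l52}); and the local characterization Theorem \ref{localframeB} already available to us.

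First, I would establish the Besov counterpart of Theorem \ref{rozklad}. Since $\{P_{\Omega_x}\}_{x\in X}$ is a finite family of Hestenes operators and Corollary \ref{hestenB} asserts that each $H\in\mathcal H(M)$ is bounded on $\B^s_{p,q}(M)$, the proof carried out in \cite[Theorem 6.1]{BD} goes through verbatim: the operator $Tf=(P_{\Omega_x}f)_{x\in X}$ is an isomorphism of $\B^s_{p,q}(M)$ onto $\bigoplus_{x\in X}P_{\Omega_x}(\B^s_{p,q}(M))$, so that
\[
\|f\|_{\B^s_{p,q}(M)}\asymp \sum_{x\in X}\|P_{\Omega_x}f\|_{\B^s_{p,q}(M)}.
\]
Injectivity uses $\sum_{x}P_{\Omega_x}=\mathbf I$ on $\calD'(M)$ (Definition \ref{dist} together with $(P_{\Omega_x})^*=P_{\Omega_x}$), and surjectivity uses $P_{\Omega_x}\circ P_{\Omega_{x'}}=0$ for $x\ne x'$.

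Next I would prove a Besov analogue of Lemma \ref{l52}. The argument in Lemma \ref{l52} localizes $f$ to a geodesic chart using a partition of unity satisfying \eqref{px}, which collapses the defining sum in Definition \ref{tlm} to a single term; the same device works for Besov spaces, since $\B^s_{p,q}(M)$ on a manifold with bounded geometry admits the exact analogue of Definition \ref{tlm} (obtained from \eqref{ri} or directly as in \cite[Section 7.3]{Tr4}). One then applies the dilation $\delta_a$ with $a=r/(3\sqrt d)$, invokes Theorem \ref{localframeB}(v)--(vi) on $\delta_a(f\circ\kappa^{-1})$, and transfers everything back through $T^2_x$. The pointwise multiplier step (the analogue of the use of Theorem \ref{multiplier}) is also available for Besov spaces from \cite{Tr4}, and the identity $\langle T^2_xg,T^2_x\phi\rangle=\langle g,\phi\rangle$ from \eqref{fact1} is unchanged. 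The outcome is the reconstruction formula
\[
f=\sum_{j\ge j_0}\sum_{\mathbf e\in E_j}\sum_{k\in\Gamma_j}\langle f,T^2_xf^{\mathbf e}_{(j,k)}\rangle\,T^2_xf^{\mathbf e}_{(j,k)}
\]
with the appropriate unconditional convergence, together with the norm equivalence
\[
\|f\|_{\B^s_{p,q}(M)}\asymp\bigg(\sum_{j\ge j_0}2^{j(s+d/2-d/p)q}\sum_{\mathbf e\in E_j}\bigg(\sum_{k\in\Gamma_j}|\langle f,T^2_xf^{\mathbf e}_{(j,k)}\rangle|^p\bigg)^{q/p}\bigg)^{1/q}
\]
for $f\in\B^s_{p,q}(M)$ supported in $\Omega_x(r/(3\sqrt d))$.

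With these two ingredients in hand, the proof of Theorem \ref{besov} mirrors that of Theorem \ref{triebel} step by step: apply the Besov decomposition to write $\|f\|_{\B^s_{p,q}(M)}^p\asymp \sum_{x\in X}\|P_{\Omega_x}f\|^p_{\B^s_{p,q}(M)}$; apply the local Besov reconstruction to each $P_{\Omega_x}f$, using $(P_{\Omega_x})^2=P_{\Omega_x}=(P_{\Omega_x})^*$ to pull the projection to both sides of the pairing; sum over the finite set $X$ to obtain (ii); and, using the local norm equivalence above, collect the coefficients into the $\bb^s_{p,q}(M)$ quasi-norm to obtain \eqref{rownowgaB}. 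The finiteness of $X$ (since $M$ is compact) makes the interchange of the outer sum over $x$ with the $\ell^p$/$\ell^q$ structure harmless.

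The main obstacle is the local lemma: one must carefully handle the composition of a compactly supported distribution with the geodesic chart, the dilation, and the multiplication by $|\det g_\kappa|^{-1/4}$ inside a Besov space, exactly as in Lemma \ref{l52}. Once that is done cleanly, everything else is a routine transcription of the Triebel--Lizorkin proof with $\f^s_{p,q}$ replaced by $\bb^s_{p,q}$ and Theorem \ref{hesten} replaced by Corollary \ref{hestenB}.
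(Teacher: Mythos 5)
Your proposal is correct and follows essentially the same route as the paper: deduce the Besov analogue of Theorem \ref{rozklad} from Corollary \ref{hestenB}, establish a Besov version of Lemma \ref{l52} via the chart/partition-of-unity characterization of $\B^s_{p,q}(M)$ (which on a compact manifold agrees with the interpolation definition \eqref{ri}) together with Theorem \ref{localframeB}, and then transcribe the proof of Theorem \ref{triebel}. You also correctly identify the one genuinely delicate point, namely justifying the local lemma for Besov norms, which is exactly where the paper invokes the equivalence of the interpolation and localization definitions.
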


\begin{proof}
We follow along the lines of the proof of Theorem \ref{triebel}.
By Corollary \ref{hestenB} we deduce a counterpart of Theorem \ref{rozklad} for Besov spaces. 
That is,
\[
||f||_{\B^s_{p,q}(M)} \asymp \bigg(\sum_{x\in X} \|P_{\Omega_x} f||^p_{\B^s_{p,q}(M)} \bigg)^{1/p}.
\]
Since manifold $M$ is compact, the interpolation definition \eqref{ri} of Besov spaces coincides with a definition using smooth partition of unity on $M$, see \cite[(7.3.2)(8)]{Tr4} and \cite[Theorem 3]{Sk0}. Hence, we can show an analogue of Lemma \ref{l52} for Besov spaces
using Theorem \ref{localframeB} in place of Theorem \ref{localframe}.
 In particular, \eqref{repf} holds for $f\in \B^s_{p,q}$ with appropriate unconditional convergence. Moreover,
\[
\| P_{\Omega_x} f||_{ \B^s_{p,q}(M)}^q 
\asymp
\sum_{j\geq j_0}  2^{j(s+d/2-d/p)q} \sum_{\mathbf e\in E_j} \bigg( \sum_{k\in \Gamma_j}   |\lan f, P_{\Omega_x} T^{2}_x (f^{\mathbf e}_{(j,k) }) \ran |^p
\bigg)^{q/p}  .
\]
The rest of the argument is an easy adaptation of the proof of Theorem \ref{triebel}.
\end{proof}

\section{Proof of Lemma \ref{koniecprawie}}\label{SL}

In this section we give the proof of Lemma \ref{koniecprawie}, which enables us  to compute norms of localized distributions in Triebel-Lizorkin spaces using highly redundant (globally defined) Meyer wavelets on $\R^d$. Since all wavelet coefficients are needed for the reconstruction formula \eqref{ss}, it is necessary to absorb excess of frame coefficients by periodizing the formula \eqref{tld} describing the discrete Triebel-Lizorkin space $\f^s_{p,q}$. Consequently, we show a modified formula \eqref{periodic} for discrete Triebel-Lizorkin spaces, which holds for localized distributions in $\F^s_{p,q}$ spaces.

For a fixed $j\in \Z$ we define a partition of $\Z^d$ by
\begin{equation}\label{ljl}
\Lambda_{j,l}= \{k\in \Z^d: k/2^j\in 2l+[-1,1)^d \}. 
\end{equation}
Let $|l|_\infty=\max\{|l_1|\ldots,|l_d|\}$.
 If $\psi$ is a function on $\R^d$, define $\psi_{j,k}(x)=2^{jd/2}\psi(2^jx-k)$ for $j\in \Z$, $k\in \Z^d$.

\begin{lemma}\label{szacowanie}
Let $\psi \in \calS(\R^d)$.
For all $\mu>0$,  there is $C=C_{\mu,\psi}>0$ such that
for all $j\geq 0$, $l\in \Z^d$, $|l|_\infty\geq 2$ and $k\in \Lambda_{j,l}$, we have
\begin{equation}\label{Schwartz}
|\psi(2^j x-k)|\leq \frac{C}{2^{j\mu}(|l|_\infty+1)^\mu} \qquad\text{for }x\in [-2,2]^d.
\end{equation}
\end{lemma}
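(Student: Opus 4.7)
The plan is to exploit the Schwartz decay of $\psi$ and show that the argument $2^j x - k$ is forced to be large (on the order of $2^j |l|_\infty$) by the combined hypotheses $k \in \Lambda_{j,l}$, $|l|_\infty \ge 2$, and $x \in [-2,2]^d$. Then one Schwartz bound $|\psi(y)| \le C_N (1+|y|)^{-N}$ with $N = \mu$ produces the stated inequality.

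The first step is to unwind the definition \eqref{ljl}. Membership $k \in \Lambda_{j,l}$ means $k/2^j \in 2l + [-1,1)^d$, i.e.\ $k = 2^{j+1} l + r$ for some $r \in \Z^d$ with $|r|_\infty < 2^j$. Consequently, for $x \in [-2,2]^d$,
\[
|2^j x - k|_\infty \;\ge\; |2^{j+1} l|_\infty - |2^j x|_\infty - |r|_\infty \;\ge\; 2^{j+1}|l|_\infty - 2^{j+1} - 2^j \;=\; 2^j(2|l|_\infty - 3).
\]
The hypothesis $|l|_\infty \ge 2$ gives $2|l|_\infty - 3 \ge \tfrac{1}{2}|l|_\infty$, so $|2^j x - k|_\infty \ge 2^{j-1}|l|_\infty$.

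The second step combines this lower bound with the Schwartz estimate. Since $j \ge 0$ and $|l|_\infty \ge 2$, the quantity $2^{j-1}|l|_\infty \ge 1$, hence $1 + |2^j x - k| \ge 1 + 2^{j-1}|l|_\infty \gtrsim 2^j(|l|_\infty + 1)$. Applying $|\psi(y)| \le C_\mu(1+|y|)^{-\mu}$ with $y = 2^j x - k$ then yields \eqref{Schwartz} with constant $C = C_{\mu,\psi}$ depending only on $\mu$ and finitely many Schwartz seminorms of $\psi$. No step poses a genuine obstacle; the only mild care is the elementary inequality $2|l|_\infty - 3 \ge |l|_\infty/2$ valid precisely in the regime $|l|_\infty \ge 2$, which is exactly why this cutoff appears in the statement.
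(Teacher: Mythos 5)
Your proof is correct and follows essentially the same route as the paper's (omitted) argument: locate $k$ near $2^{j+1}l$ via the definition of $\Lambda_{j,l}$, deduce $|2^jx-k|_\infty\asymp 2^j(|l|_\infty+1)$ using $|l|_\infty\ge 2$, and apply a single Schwartz decay estimate. No issues.
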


A straightforward proof of Lemma \ref{szacowanie} is omitted.
Using Lemma \ref{szacowanie} we deduce the following estimate for $\F^s_{p,q}(\R^d)$ spaces. 

\begin{proposition}\label{p62}
Let $\psi \in \calS(\R^d)$.
Let $0<p<\infty$, $0<q\leq \infty$ and $s\in \R$. 
For all $\mu>0$  there exists a constant $C>0$
  such that for all $f\in  \F^{s}_{p,q}(\R^d)$ with
$\supp f \subset [-1,1]^d$  and $j\geq 0$, $l\in \Z^d, |l|_\infty \geq 2$, $k\in \Lambda_{j,l}$ we have
\begin{equation}\label{Holder11}
|\lan f,\psi_{j,k }\ran |\leq \frac{ C \|f ||_{\F^s_{p,q}(\R^d)}}{2^{j\mu}(|l|_\infty+1)^\mu}.
\end{equation}
\end{proposition}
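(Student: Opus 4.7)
Since $\supp f\subset[-1,1]^d$, fix once and for all a cutoff $\eta\in C_c^\infty(\R^d)$ with $\eta\equiv 1$ on $[-1,1]^d$ and $\supp\eta\subset[-2,2]^d$; then $\langle f,\psi_{j,k}\rangle=\langle f,\eta\psi_{j,k}\rangle$. My plan is to bound $\|\eta\psi_{j,k}\|_{C^M(\R^d)}$ by the desired factor $2^{-j\mu}(|l|_\infty+1)^{-\mu}$ for some fixed large $M=M(s,p,q,d)$, and then to control $|\langle f,\eta\psi_{j,k}\rangle|$ by $\|f\|_{\F^s_{p,q}(\R^d)}\,\|\eta\psi_{j,k}\|_{C^M(\R^d)}$.

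The first step is a direct consequence of Lemma \ref{szacowanie}. Since every derivative $D^\alpha\psi$ is again a Schwartz function, applying Lemma \ref{szacowanie} to $D^\alpha\psi$ with exponent $\tilde\mu:=\mu+|\alpha|+d/2$ gives, for $x\in[-2,2]^d$ and $k\in\Lambda_{j,l}$ with $|l|_\infty\ge 2$,
\[
|D^\alpha\psi_{j,k}(x)|=2^{j(d/2+|\alpha|)}|(D^\alpha\psi)(2^jx-k)|\le\frac{C_{\alpha,\mu,\psi}\,2^{j(d/2+|\alpha|)}}{2^{j\tilde\mu}(|l|_\infty+1)^{\tilde\mu}}\le\frac{C_{\alpha,\mu,\psi}}{2^{j\mu}(|l|_\infty+1)^{\mu}},
\]
the scaling factor $2^{j(d/2+|\alpha|)}$ being absorbed exactly. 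Leibniz' rule together with the boundedness of $\eta$ and all its derivatives then yields, for every $M\in\N$,
\[
\|\eta\psi_{j,k}\|_{C^M(\R^d)}\le\frac{C_{M,\mu,\psi,\eta}}{2^{j\mu}(|l|_\infty+1)^\mu}.
\]

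The second step, which I expect to be the main technical point, is the estimate
\[
|\langle f,\phi\rangle|\le C\,\|f\|_{\F^s_{p,q}(\R^d)}\,\|\phi\|_{C^M(\R^d)}
\]
for every $\phi\in C^M_c(\R^d)$ with $\supp\phi\subset[-2,2]^d$, once $M$ is chosen large enough depending on $s,p,q,d$. When $1\le p,q<\infty$ this is routine: the duality $(\F^s_{p,q}(\R^d))^*=\F^{-s}_{p',q'}(\R^d)$ together with the embedding $C^M_c([-2,2]^d)\hookrightarrow \F^{-s}_{p',q'}(\R^d)$, valid for $M>\max(-s,0)$, yields $|\langle f,\phi\rangle|\le\|f\|_{\F^s_{p,q}}\|\phi\|_{\F^{-s}_{p',q'}}\le C\|f\|_{\F^s_{p,q}}\|\phi\|_{C^M}$. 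Outside this Banach range, one appeals either to Triebel's extended duality for the scale $\F^s_{p,q}$ or to the wavelet/molecular characterisation: expanding $f$ in the Meyer basis and using that $\|(\lan f,\psi^{\mathbf e'}_{j',k'}\ran)\|_{\f^s_{p,q}}\asymp\|f\|_{\F^s_{p,q}}$, the pairing $\langle f,\phi\rangle=\sum_{j',k',\mathbf e'}\langle f,\psi^{\mathbf e'}_{j',k'}\rangle\langle\psi^{\mathbf e'}_{j',k'},\phi\rangle$ reduces to an almost-diagonal sum in which the coefficients $\langle\psi^{\mathbf e'}_{j',k'},\phi\rangle$ are controlled by $\|\phi\|_{C^M}$ via the rapid decay of Meyer wavelets and integration by parts. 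Applying this estimate with $\phi=\eta\psi_{j,k}$ and combining it with the $C^M$-bound from the previous paragraph produces \eqref{Holder11}.
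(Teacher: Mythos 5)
Your first step --- cutting off with $\eta$ and using Lemma \ref{szacowanie} applied to the derivatives $D^\alpha\psi$ with an enlarged exponent $\tilde\mu=\mu+|\alpha|+d/2$ to absorb the scaling factor $2^{j(d/2+|\alpha|)}$ --- is correct and is essentially what the paper does (the paper measures $\eta\psi_{j,k}$ in $W^m_{p_1'}$ rather than $C^M$, but on the fixed compact support $[-2,2]^d$ these are interchangeable). The gap is in your second step. For $1\le p,q<\infty$ your duality argument is fine, but the proposition is asserted for all $0<p<\infty$, $0<q\le\infty$, and in the quasi-Banach range your two fallback routes are not actually carried out. The appeal to ``Triebel's extended duality'' is not available in the form you use it: for $p\le 1$ the dual of $\F^s_{p,q}(\R^d)$ is \emph{not} $\F^{-s}_{p',q'}(\R^d)$ (one has $p'=\infty$ and the correct dual carries the shifted smoothness index $-s+d(1/p-1)$), so the inequality $|\lan f,\phi\ran|\le\|f\|_{\F^s_{p,q}}\|\phi\|_{\F^{-s}_{p',q'}}$ as written is not a citable statement. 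The alternative wavelet/almost-diagonal argument could in principle be made to work, but as sketched it requires extracting a pointwise bound on $|\lan f,\psi^{\mathbf e'}_{j',k'}\ran|$ from the $\f^s_{p,q}$ quasi-norm and then summing a bilinear kernel; none of the needed estimates are supplied, so this part is a placeholder rather than a proof.

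The paper closes exactly this gap with a cleaner device that you should adopt: before dualizing, it applies the Sobolev-type embedding $\F^s_{p,q}(\R^d)\hookrightarrow\F^{s_1}_{p_1,2}(\R^d)$ with $p_1>\max(p,1)$ and $s_1=s-d/p+d/p_1$ (\cite[Theorem 2.7.1]{Tr3}). This lands in the Banach range with $q=2$, where the classical duality $(\F^{s_1}_{p_1,2})^*=\F^{-s_1}_{p_1',2}$ applies, and then identifies $\F^{m}_{p_1',2}$ with the Sobolev space $W^m_{p_1'}$ for an integer $m\ge -s_1$ in order to estimate $\|\eta\psi_{j,k}\|$ by derivatives, exactly as in your first step. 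The loss of smoothness $s\mapsto s_1$ is harmless because $\mu$ is arbitrary. With this one modification your argument becomes complete and coincides with the paper's proof.
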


\begin{proof}
Take any $p_1>1$,  $p_1>p$. Define
\[
s_1=s-\frac{d}{p}+\frac{d}{p_1}.
\]
By  \cite[Theorem 2.7.1]{Tr3} we have a continuous embedding
\begin{equation}\label{embedd}
\F^s_{p,q}(\R^d) \hookrightarrow \F^{s_1}_{p_1, 2}(\R^d).
\end{equation}
 Fix a function
 $\eta\in C^\infty(\R^d)$ such that $\eta(x)=1$ for $x\in [-1,1]^d$ and $\supp \eta \subset [-2,2]^d$. 
 Let $m\in \N_0$ be such that $m\ge -s_1$. By the duality theorem for Triebel-Lizorkin spaces \cite[Theorem 2.11.2]{Tr3} we have
\[
(\F^{s_1}_{p_1,2}(\R^d))^*=\F^{-s_1}_{p_1',2}(\R^d),
\]
where $1/p_1+1/p_1'=1$. Combining this with \eqref{embedd} yields
 \begin{equation}\label{Holdera}
 \begin{aligned}
 |\lan f, \psi_{j,k }\ran|=|\lan f,\eta \psi_{j,k }\ran| & \leq C \|f||_{\F^{s_1}_{p_1,2}(\R^d)} \|\eta \psi_{j,k}||_{\F^{-s_1}_{p_1',2}(\R^d)}
 \\
 &\le 
 C \|f||_{\F^{s}_{p,q}(\R^d)} \|\eta \psi_{j,k}||_{\F^{m}_{p_1',2}(\R^d)}.
 \end{aligned}
 \end{equation}
Since the Triebel-Lizorkin space $\F^{m}_{p_1',2}(\R^d)$ is identified with the Sobolev space $W^m_{p_1}(\R^d)$, see \cite[Theorem 2.5.6]{Tr3} we need to control partial derivatives of $\eta \psi_{j,k}$.
Take any multi-index $\alpha \in (\N_0)^d$ such that $|\alpha|\le m$. Since the function $\eta$ is fixed by the product rule we have
\[
\begin{aligned}
\bigg(\int_{\R^d}  |\partial^\alpha(\eta \psi_{j,k})|^{p_1'}\bigg)^{1/p_1'}
&\leq 
C\sum_{|\beta| \le |\alpha|} 2^{j|\beta|} \bigg(\int_{[-2,2]^d}  |(\partial^\beta\psi)_{j,k}|^{p_1'}\bigg)^{1/p_1'}
\\
& \le
C 2^{j(m+d/2)} \sum_{|\beta| \le m} \bigg(\int_{[-2,2]^d} |\partial^\beta \psi(2^jx-k)|^{p_1'}dx \bigg)^{1/p_1'}.
\end{aligned}
\]
Applying Lemma \ref{szacowanie} to functions $\partial^\beta\psi$, $|\beta|\le m$, yields 
\[
 \|\eta \psi_{j,k}||_{\F^{m}_{p_1',2}(\R^d)} \asymp  \|\eta \psi_{j,k}||_{W^{m}_{p_1'}(\R^d)}
 \le \frac{C}{2^{j\mu}(|l|_\infty+1)^\mu}.
\]
Combining this with \eqref{Holdera} yields \eqref{Holder11}.
\end{proof}

\begin{definition}\label{qf}
Let $0<p<\infty$, $0<q\leq \infty$ and $s\in \R$.
  Let   $\psi^{\mathbf e}_{jk}$, $j\geq 0$, $k\in \Z^d, \mathbf e\in E_j$ be the multivariate Meyer wavelets as in Definition \ref{meyer}.
For a natural number $\lambda\geq 0$ we define an operator $Z_\lambda: \F^s_{p,q} \to \F^s_{p,q}$  by
\begin{equation}\label{operatorZ}
Z_\lambda(f)=\sum_{j\geq 0} \sum_{\mathbf e\in E_j}\sum_{|l|_\infty>\lambda}\sum_{ k\in \Lambda_{j,l}} \lan f,\psi^{\mathbf e}_{j, k}\ran \psi^{\mathbf e}_{j, k } \qquad\text{for }f\in \F^s_{p,q}(\R^d).
\end{equation}
We define a $\mathrm{q}$-function of $f$ as
\begin{equation}\label{qquad}
\mathrm{q}(f)=\Big(\sum_{j\geq 0} \sum_{\mathbf e\in E_j} \sum_{ k\in \Z^d} \big( 2^{js}  \chi_{j,k} |\lan f,\psi^{\mathbf e}_{j,k}\ran| \big)^q \Big)^{1/q},
\end{equation}
where $\chi_{j,k}(x)=2^{jd/2} \chi_I(2^jx-k)$, $I=[0,1]^d$.
In the case $q=\infty$ the above definition involves $\ell^\infty$ norm.
 \end{definition}

 The operator $Z_\lambda$ is well defined and bounded by \cite[Theorem 3.12]{Trtom3}.
In addition, the quasi norm (or norm) in Triebel-Lizorkin spaces is equivalent with $L^p$ norm of $\mathrm q$-function, i.e.
\begin{equation}\label{normadyskretny}
\|f||_{ \F^s_{p,q}(\R^d)}^p \asymp \int_{\R^d}   {\mathrm q}^{p}(f)
=
\int_{\R^d} \Big( \sum_{j\geq 0} \sum_{\mathbf e\in E_j} \sum_{ k\in \Z^d} \big( 2^{js}  \chi_{j,k}(x) |\lan f,\psi^{\mathbf e}_{j,k}\ran| \big)^q \Big)^{p/q} dx.
\end{equation}

\begin{proposition}\label{klucz}
Let $0<p<\infty$, $0<q\leq \infty$, and $s\in \R$. Let  $f\in \F^s_{p,q}(\R^d)$ be such that $\supp f\subset [-1,1]^d.$ 
Let $\lambda\geq 2$.  Then for any $\mu>0$ there is $C_\mu$ such that
\begin{equation}\label{norma1}
\|Z_\lambda f||_{\F^s_{p,q}(\R^d)} \leq C_\mu \lambda^{-\mu}\|f||_{\F^s_{p,q}(\R^d)}.
\end{equation}
Consequently, for sufficiently large $\lambda\geq 2$ we have
\begin{equation}\label{norma2}
\begin{aligned}
\|f||_{ \F^s_{p,q}(\R^d)} &\asymp \bigg(\int_{\R^d}   {\mathrm q}^{p}(f-Z_\lambda f) \bigg)^{1/p}
\\
&= \bigg(\int_{\R^d} \Big( \sum_{j\geq 0} \sum_{\mathbf e\in E_j} \sum_{|l|_\infty \leq \lambda}\sum_{ k\in \Lambda_{j,l}} \big( 2^{js}  \chi_{j,k}(x) |\lan f,\psi^{\mathbf e}_{j,k}\ran| \big)^q \Big)^{p/q} dx \bigg)^{1/p}
\end{aligned}
\end{equation}
\end{proposition}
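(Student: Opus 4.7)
The plan is to prove \eqref{norma1} by combining the coefficient decay estimate of Proposition \ref{p62} with the $\mathrm q$-function characterization \eqref{normadyskretny}, and then to deduce \eqref{norma2} from \eqref{norma1} via a standard absorption argument.

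Since Meyer wavelets form an unconditional basis of $\F^s_{p,q}(\R^d)$, the Meyer wavelet coefficients of $Z_\lambda f$ agree with those of $f$ on the index set $\{(j,\mathbf e,k):k\in\Lambda_{j,l},\,|l|_\infty>\lambda\}$ and vanish elsewhere. Applying \eqref{normadyskretny} to $Z_\lambda f$ yields
\[
\|Z_\lambda f\|_{\F^s_{p,q}}^p \asymp \int_{\R^d}\bigg(\sum_{j\ge 0}\sum_{\mathbf e\in E_j}\sum_{|l|_\infty>\lambda}\sum_{k\in\Lambda_{j,l}} \bigl(2^{js}\chi_{j,k}(x)|\lan f,\psi^{\mathbf e}_{j,k}\ran|\bigr)^q\bigg)^{p/q}dx.
\]
For fixed $x\in\R^d$ and $j\ge 0$, the cubes $2^{-j}(k+I)$ tile $\R^d$, so there is a unique $k^*=k^*(x,j)\in\Z^d$ with $\chi_{j,k^*}(x)\neq 0$, and $k^*$ lies in a unique cell $\Lambda_{j,l^*(x,j)}$. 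From $2^jx-k^*\in[0,1)^d$ and $k^*/2^j\in 2l^*+[-1,1)^d$ a direct computation gives $l^*(x,j)=x/2-(u+t)/2^{j+1}$ with $u\in[0,1)^d$ and $t\in[-2^j,2^j)^d$, whence $|l^*(x,j)-x/2|_\infty\le 1$ uniformly in $j\ge 0$. Consequently, $|l^*(x,j)|_\infty>\lambda$ forces $|x|_\infty>2\lambda-2$, and in that regime $|l^*(x,j)|_\infty+1\gtrsim |x|_\infty+1$ uniformly in $j$.

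Invoking Proposition \ref{p62} with a decay parameter $\mu_0$ to be chosen, each surviving coefficient satisfies $|\lan f,\psi^{\mathbf e}_{j,k^*}\ran|\le C\|f\|_{\F^s_{p,q}}\cdot 2^{-j\mu_0}(|l^*|_\infty+1)^{-\mu_0}$. Combining with $\chi_{j,k^*}(x)=2^{jd/2}$, summing over $\mathbf e\in E_j$ (at most $2^d$ terms), and taking the $\ell^q$-sum (or supremum if $q=\infty$) over $j\ge 0$ with $\mu_0>s+d/2$ (so that $\sum_{j\ge 0}2^{jq(s+d/2-\mu_0)}<\infty$), we obtain
\[
\mathrm q(Z_\lambda f)(x)\le C\|f\|_{\F^s_{p,q}}(|x|_\infty+1)^{-\mu_0}\chi_{\{|x|_\infty>2\lambda-2\}}(x).
\]
Integrating the $p$-th power and using $\int_{|x|_\infty>\lambda}(|x|_\infty+1)^{-\mu_0 p}dx\asymp\lambda^{d-\mu_0 p}$ (valid for $\mu_0 p>d$) gives $\|Z_\lambda f\|_{\F^s_{p,q}}\le C\lambda^{d/p-\mu_0}\|f\|_{\F^s_{p,q}}$. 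For any target $\mu>0$, choosing $\mu_0\ge\max(\mu+d/p,\,s+d/2+1)$ establishes \eqref{norma1}.

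For \eqref{norma2}, apply the quasi-triangle inequality of $\F^s_{p,q}$ to the decomposition $f=(f-Z_\lambda f)+Z_\lambda f$. Using \eqref{norma1} with fixed $\mu$ sufficiently large and $\lambda$ sufficiently large so that $\|Z_\lambda f\|_{\F^s_{p,q}}$ becomes an arbitrarily small fraction of $\|f\|_{\F^s_{p,q}}$ in the $\rho$-norm sense with $\rho=\min(1,p,q)$, a standard absorption argument produces $\|f\|_{\F^s_{p,q}}\asymp\|f-Z_\lambda f\|_{\F^s_{p,q}}$. The explicit expression in \eqref{norma2} then follows by applying \eqref{normadyskretny} to $f-Z_\lambda f$, whose Meyer wavelet coefficients are supported exactly on the index set $|l|_\infty\le\lambda$. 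The main technical obstacle is the uniform-in-$j$ geometric identification $|l^*(x,j)-x/2|_\infty\le 1$, which is precisely what converts the discrete index decay provided by Proposition \ref{p62} into integrable spatial decay in $|x|_\infty$; once this is secured, the remainder of the argument is bookkeeping of exponents.
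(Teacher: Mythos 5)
Your proposal is correct and follows essentially the same route as the paper: apply the $\mathrm q$-function characterization \eqref{normadyskretny} to $Z_\lambda f$, use the decay estimate of Proposition \ref{p62} on the surviving coefficients, exploit the fact that for each $x$ and $j$ only one cell $\Lambda_{j,l}$ with $l$ essentially equal to $x/2$ contributes (the paper phrases this by restricting $x$ to lattice cubes $2\beta+[-1,1)^d$ and showing $l=\beta$, which is the same geometric observation as your bound $|l^*(x,j)-x/2|_\infty\le 1$), sum the geometric series in $j$, integrate the spatial tail, and finish \eqref{norma2} by the quasi-norm absorption argument. The only differences are cosmetic bookkeeping.
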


\begin{proof}
For simplicity we assume that $q<\infty$; the case $q=\infty$ follows by easy modifications.
Since the set $\{\psi^{\mathbf e}_{j,k}: j\geq 0, \mathbf e \in E_j, k\in \Z^d\}$ is an orthonormal basis of $L^2(\R^d)$ we have
\[
{\mathrm q}^q(Z_\lambda f)=\sum_{j\geq 0} \sum_{\mathbf e\in E_j} \sum_{|l|_\infty>\lambda}\sum_{ k\in \Lambda_{j,l}} \big( 2^{js}  \chi_{j,k} |\lan f,\psi^{\mathbf e}_{j,k}\ran| \big)^q.
\]
By scaling we can assume that  $\|f||_{\F^s_{p,q}(\R^d)}=1$. By \eqref{Holder11} we have
\[
{\mathrm q}^q(Z_\lambda f)\leq C \sum_{j\geq 0}  \sum_{|l|_\infty>\lambda}\sum_{ k\in \Lambda_{j,l}} \bigg(
\frac{2^{j(s-\mu)} }{(1+|l|_\infty)^\mu} \chi_{j,k}  \bigg)^q.
\]
Take any $x\in 2\beta+[-1,1)^d$, $\beta\in \Z^d$. If $k\in \Lambda_{j,l}$, then
\[
x-2^{-j}k \in 2(\beta-l)+2[-1,1)^d.
\]
Hence, if $\chi_{j,k}(x) \ne 0$, then $x-2^{-j}k  \in 2^{-j}[0,1]^d$, and hence $l=\beta$.
In particular, if $|\beta|_\infty \leq \lambda$, then
\[
\mathrm{q}(Z_\lambda f)(x)=0.
\]
On the other hand, if $|\beta|_\infty > \lambda$, then
\[
\mathrm{q}^q(Z_\lambda f)(x) \leq C\sum_{j\geq 0}  \bigg( 2^{j(s-\mu)} 2^{jd/2} 
\frac{1}{(1+|\beta|_\infty)^\mu} \bigg)^q.
\]
Choose sufficiently large $\mu$ such that $\delta=s-\mu+d/2<0$. Then,
\[
\mathrm q(Z_\lambda f)(x) \leq C\frac{1}{(1+|\beta|_\infty)^\mu}  \Big( \sum_{j\geq 0}  \big( 2^{j(s-\mu)} 2^{jd/2} \big)^q \Big)^{1/q}\leq C' \frac{1}{(1+|\beta|_\infty)^\mu}
\]
Then,
\[
\int_{\R^d} (\mathrm q(Z_\lambda f)(x) )^{p} dx \leq 2^d (C')^p \sum_{|\beta|_\infty > \lambda} \frac{1}{(1+|\beta|_\infty)^{p\mu}}  .
\]
If we assume additionally  that $p\mu>d$, then
\begin{equation}\label{rho}
\int_{\R^d} (\mathrm q(Z_\lambda f)(x) )^{p} dx \leq C'' \lambda^{-p\mu+d} .
\end{equation}
Hence, by \eqref{normadyskretny} we have
\begin{equation}\label{lambda}
\|Z_\lambda f||_{ \F^s_{p,q}(\R^d)}^p \asymp \int_{\R^d}  \big( \mathrm q(Z_\lambda f) \big)^{p}\leq C''  \lambda^{-p\mu+d} \|f||_{\F^s_{p,q}(\R^d)}^p.
\end{equation}
Since $\mu$ is arbitrarily large, we deduce \eqref{norma1}.
By the triangle inequality (with a constant) for $\F^s_{p,q}$ space we deduce that the norm of $f$ is comparable with the norm of $f-Z_\lambda f$ for sufficiently large $\lambda$, which shows \eqref{norma2}.
\end{proof}

We are now ready to complete the proof of Lemma \ref{koniecprawie}.

\begin{proof}[Proof of Lemma \ref{koniecprawie}]
Take $\xi=4\lambda+2$, where $\lambda \ge 2$ is sufficiently large as in Proposition \ref{klucz}. We shall show that \eqref{periodic} holds with $\lambda$ replaced by $\xi$. 
By \eqref{ljl} we have
\[
\bigcup_{|l|_\infty\leq \lambda} \Lambda_{j,l}=\{k\in \Z^d: k/2^j\in [-2\lambda-1,2\lambda+1)^d \}=\Lambda_j.
\] 
By \eqref{norma2}, we have
 \begin{equation}\label{norma4}
\|f||_{ \F^s_{p,q}(\R^d))}^p \asymp \int_{\R^d} \Big( \sum_{j\geq 0} \sum_{\mathbf e\in E_j} \sum_{ k\in \Lambda_{j}} \big( 2^{js}  \chi_{j,k}(x) |\lan f,\psi^{\mathbf e}_{j,k}\ran| \big)^q \Big)^{p/q} dx.
\end{equation}
Hence, we automatically have
\begin{equation}
\label{clear}
\| f||_{ \F^s_{p,q}(\R^d)}^p\leq C
\int_{\R^d} \Big( \sum_{j\geq 0} \sum_{\mathbf e\in E_j} \sum_{ k\in \Lambda_{j}} \sum_{l\in \Z^d} \big( 2^{js}  \chi_{j,k}(x) |\lan f,\psi^{\mathbf e}_{j,k+2^jl\xi}\ran| \big)^q \Big)^{p/q} dx.
\end{equation}

The reverse inequality is a consequence of Proposition \ref{p62}. Indeed, the integration on the right hand side of \eqref{clear} to can be restricted to $[-2\lambda-1,2\lambda+1)^d$ since $\supp \chi_{j,k}= 2^{-j}(I+k)$.
Hence, taking into consideration \eqref{norma4} and \eqref{clear} it is sufficient to prove that there is $C>0$ such that for $x\in [-2\lambda-1,2\lambda+1)^d$ 
\begin{equation}\label{norma5}
\Big( \sum_{j\geq 0} \sum_{\mathbf e\in E_j} \sum_{ k\in \Lambda_{j}} \sum_{l\neq 0} \big( 2^{js}  \chi_{j,k}(x) |\lan f,\psi^{\mathbf e}_{j,k+2^jl\xi}\ran| \big)^q \Big)^{1/q}
\leq
C \| f||_{ \F^s_{p,q}(\R^d)}.
\end{equation}
Take any $k\in \Lambda_j$. Then, $k\in \Lambda_{j,\tilde l}$ for some  $|\tilde{l}|_\infty \leq \lambda$. Hence,
\[
2^{-j}(k+2^jl\xi) \in 2(\tilde{l}+l\xi/2)+[-1,1)^d=  \Lambda_{j,\tilde l+l\xi/2}.
\]
If $l\in \Z^d$ and $l \neq 0$, then $|\tilde{l}+l\xi/2|_\infty \ge 2$, by \eqref{Holder11}  we have
\[
 |\lan f,\psi^{\mathbf e}_{j,k+2^jl\xi}\ran| \leq  \frac{ C \|f ||_{\F^s_{p,q}(\R^d)}}{2^{j\mu}(|\tilde{l}+l\xi/2|_\infty+1)^\mu}.
 \]
 Hence, taking $\mu>\max(d,s+d/2)$ yields a constant $C'$ such that
 \[
\sum_{l\ne 0}  |\lan f,\psi^{\mathbf e}_{j,k+2^jl\xi}\ran|^q \leq  C' 2^{-jq\mu}  \|f ||_{\F^s_{p,q}(\R^d)}^q. 
 \]
 Therefore,
 \[
 \sum_{j\geq 0} \sum_{\mathbf e\in E_j} \sum_{ k\in \Lambda_{j}} \sum_{l\neq 0} \big( 2^{js}  \chi_{j,k}(x) |\lan f,\psi^{\mathbf e}_{j,k+2^jl\xi}\ran| \big)^q \le 
 C'  2^d  \|f ||_{\F^s_{p,q}(\R^d) }^q \sum_{j\geq 0} 2^{j(s+d/2-\mu)q}.
 \]
This proves \eqref{norma5}. 
\end{proof}

\end{document}